﻿
\documentclass[12pt]{amsart}
﻿
﻿
﻿
\makeatletter \@addtoreset{equation}{section}

\makeatother \makeatletter
\newtheorem{theorem}{Theorem}[section]
\newtheorem{corollary}[theorem]{Corollary}
\newtheorem{definition}[theorem]{Definition}
\newtheorem{lemma}[theorem]{Lemma}
\newtheorem{proposition}[theorem]{Proposition}
\newtheorem{remark}[theorem]{Remark}
\newtheorem{example}[theorem]{Example}
\newtheorem*{theorem*}{Theorem}
\newtheorem*{proposition*}{Proposition}﻿

\def\pr1{\prod\hskip -2.07ex * \hskip 0.9 ex}

\newcommand{\aaa}{ \textbf{\em a}}

﻿
\oddsidemargin = 8pt
\evensidemargin = 8pt
\usepackage{latexsym} \usepackage{amsmath} \usepackage{amssymb}
\usepackage{amsthm}
\usepackage{centernot}
\usepackage{cancel}\usepackage{color}
\pagestyle{plain}
\addtolength{\textwidth}{3cm}
﻿
\begin{document}

\noindent
\title{On the irreducibility of slice algebraic sets}
\author{Anna Gori $^1$} 
 \thanks{$^1$ Dipartimento di Matematica - Universit\`a di Milano,                
 Via Saldini 50, 20133  Milano, Italy}
 \author{Giulia Sarfatti $^2$}
 \thanks{ $^2$ DIISM - Universit\`a Politecnica delle Marche,               
 	Via Brecce Bianche 12,  60131, Ancona, Italy}
 \author{Fabio Vlacci $^3$}
 \thanks{$^3$ MIGe  - Universit\`a di Trieste, Piazzale Europa 1, 34100, Trieste, Italy}
 \begin{abstract}
{
In the present paper we investigate the {relations} between {irreducible slice algebraic} sets in $\mathbb{H}^n$ and {quasi prime} right ideals of the ring of slice regular polynomials in $n$ quaternionic variables. We
 provide  algebraic conditions 
 on right ideals of slice regular polynomials {which guarantee the}
 irreducibility of the corresponding {slice} algebraic sets
 and show that radical ideals associated with irreducible slice algebraic sets are quasi prime.
 Furthermore we establish that this correspondence is an equivalence 
 in the case of principal right ideals.}

\end{abstract}﻿

\keywords{Nullstellensatz, quaternionic slice regular polynomials, irreducible algebraic sets\\
{\bf MSC:} 30G35, 16S36} 
\maketitle

\section*{Acknowledgments}
The authors are partially supported by:
GNSAGA-INdAM via the project ``Hypercomplex function theory and
applications''; the first author is also partially supported by MUR
project PRIN 2022 ``Real and Complex Manifolds: Geometry and
Holomorphic Dynamics'', the second and the third authors are also
partially supported by MUR projects PRIN 2022 ``Interactions between
Geometric Structures and Function Theories''.

\section{Introduction}
﻿
﻿
\noindent In Algebraic Geometry ideals of polynomials  and algebraic varieties are deeply related and 
this correspondence often refers to the Hilbert Nullstellensatz.

\noindent {In the quaternionic setting, a version of the so called {\em Strong} Hilbert Nullstellensatz was proven, independently, in \cite{israeliani-3} and in \cite{Nul2}. The rings of polynomials taken into account in these two papers are different but isomorphic. Let $\mathbb H$ denote the skew-field of quaternions. In the present paper we will consider, as in \cite{Nul2}, 
polynomial functions with quaternionic coefficients defined on $\mathbb H^n$ as 
\[P(q_1,\ldots, q_n)=
\sum_{ \substack{\ell_1=0,\ldots, L_1  \\
		\cdots\\ \ell_n=0,\ldots, L_n} }{q_1}^{\ell_1}\cdots {q_n}^{\ell_n}a_{\ell_1,\ldots,\ell_n}, \]
which belong to the class of {\em slice regular functions} according to Definition 2.5 in \cite{severalvariables}.
The set $\mathbb{H}[q_1,\ldots,q_n]$ of slice regular polynomials in $n$ quaternionic variables 
can be equipped with a suitable
notion of (non-commutative) product, called {\em slice product} and denoted by the symbol $*$, endowing  $(\mathbb{H}[q_1,\ldots,q_n], +,*)$ with the structure of non-commutative ring. 
The set of slice regular polynomials vanishing on a given subset $Z$ of $\mathbb{H}^n$ is not in general an ideal of
$\mathbb{H}[q_1,\ldots,q_n]$. Therefore we  define  $\mathcal{J}(Z)$ to be the right ideal 
generated by
slice regular polynomials in several variables which vanish on the subset $Z$. The choice of working with right ideals is due to the definition of slice regularity we refer to (see \cite{severalvariables}).
On the other hand, starting from a right ideal $I$ of $\mathbb{H}[q_1,\ldots,q_n]$, we 
define $\mathcal{V}(I)$ to be the set of common zeros of slice regular polynomials
in $I$ and
 $\mathcal V_c(I)$ to be the subset of $\mathcal V(I)$ whose points have commuting components. If $\mathbb S$ denotes the two-dimensional sphere of imaginary units in $\mathbb H$, namely $\mathbb{S}=\{J\in\mathbb{H}\ J^2=-1\}$, then
$\mathcal{V}_c(I)=\mathcal{V}(I)\cap\bigcup_{K\in \mathbb{S}}\mathbb{C}_K^n$,
where $\mathbb{C}_K:=\mathbb{R}+\mathbb{R} K$ can be
identified with the complex plane $\mathbb{C}$ { for any $K \in \mathbb S$}. 
 \\
\noindent Introducing, as in \cite{israeliani-1}, the radical  $\sqrt{I}$ of a right ideal $I$ as the intersection
of    all {\em completely prime} right ideals (see Definition \ref{cprime}) that contain $I$, 
the Strong Hilbert Nullstellensatz in $\mathbb{H}^n$ reads as follows
\begin{theorem*}[{\cite[Theorem 4.3]{Nul2}}]
	Let $I$ be a right ideal in $\mathbb{H}[q_1,\ldots, q_n]$. Then
	\[\mathcal{J}(\mathcal{V}(I))=\sqrt{I}.\] 	
\end{theorem*}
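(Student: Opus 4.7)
The plan is to prove the two inclusions $\sqrt{I}\subseteq\mathcal{J}(\mathcal{V}(I))$ and $\mathcal{J}(\mathcal{V}(I))\subseteq\sqrt{I}$ separately, adapting the strategy of the classical commutative Hilbert Nullstellensatz to the slice regular setting.

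For the easier inclusion $\sqrt{I}\subseteq\mathcal{J}(\mathcal{V}(I))$, I would attach to each point $p\in\mathcal{V}(I)$ a completely prime right ideal $\mathcal{M}_p$ that contains $I$ and is contained in the locus ``$f(p)=0$''. In the slice setting the natural candidate is the right ideal generated by polynomials vanishing at $p$ (or, more robustly, at the characteristic sphere of $p$, since zero sets of slice regular functions tend to be sphere-invariant). One checks that $\mathcal{M}_p$ is completely prime by verifying Definition~\ref{cprime} directly, using that $f*g$ vanishes where $f$ vanishes so that the quotient $\mathbb{H}[q_1,\ldots,q_n]/\mathcal{M}_p$ behaves like a domain via evaluation. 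Since $I\subseteq\mathcal{M}_p$ for each $p\in\mathcal{V}(I)$, intersecting over all such $p$ gives $\sqrt{I}\subseteq\bigcap_{p\in\mathcal{V}(I)}\mathcal{M}_p$; every element on the right-hand side vanishes identically on $\mathcal{V}(I)$ and so belongs to $\mathcal{J}(\mathcal{V}(I))$ by its very definition as the right ideal generated by such polynomials.

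For the reverse inclusion $\mathcal{J}(\mathcal{V}(I))\subseteq\sqrt{I}$, it suffices to show that any polynomial $g$ vanishing on $\mathcal{V}(I)$ lies in every completely prime right ideal $\mathfrak{p}\supseteq I$. I would attempt a Rabinowitsch-type trick: introduce an extra variable $q_{n+1}$ and form the right ideal $I'\subseteq\mathbb{H}[q_1,\ldots,q_{n+1}]$ generated by $I$ together with $1-q_{n+1}*g$. Any point of $\mathcal{V}(I')$ would restrict to a point of $\mathcal{V}(I)$ (where $g$ vanishes) and simultaneously satisfy $1-q_{n+1}*g=0$, a contradiction; hence $\mathcal{V}(I')=\emptyset$. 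A weak Nullstellensatz for slice regular polynomials would then force $1\in I'$; expanding this identity, formally substituting $q_{n+1}$ by a slice inverse of $g$, and clearing the resulting $*$-denominators in a non-commutative sense, one should deduce that some $*$-power $g^{*m}$ belongs to $I$. Iterating the defining property of complete primeness (from $g*g^{*(m-1)}\in\mathfrak{p}$ one concludes $g\in\mathfrak{p}$ or $g^{*(m-1)}\in\mathfrak{p}$) finally yields $g\in\mathfrak{p}$ for every $\mathfrak{p}\supseteq I$, and thus $g\in\sqrt{I}$.

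The main obstacle is the weak Nullstellensatz in several quaternionic variables, together with the non-commutative bookkeeping required by the Rabinowitsch substitution. My proposed route to the weak Nullstellensatz is to restrict slice regular polynomials to a slice $\mathbb{C}_K^{n+1}$, apply the classical Hilbert Nullstellensatz over $\mathbb{C}_K$, and lift a common complex zero back to $\mathbb{H}^{n+1}$ via the representation formula for slice regular polynomials. The comparison between right ideals of $\mathbb{H}[q_1,\ldots,q_{n+1}]$ and the ``slice components'' of their generators is delicate because $*$ does not coincide with pointwise product on a slice unless the coefficients are real; handling this cleanly, and relating completely prime right ideals on both sides of the slice restriction, is where the bulk of the technical work is expected to lie.
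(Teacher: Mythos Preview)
This theorem is not proved in the present paper: it is quoted from \cite{Nul2} (and, independently, \cite{israeliani-3}) and restated without proof both in the Introduction and as Theorem~\ref{snull}. There is therefore no in-paper argument to compare your proposal against.

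On the proposal itself, the Rabinowitsch half has a genuine gap at the very last step. You write that from $g*g^{*(m-1)}\in\mathfrak p$ one concludes $g\in\mathfrak p$ or $g^{*(m-1)}\in\mathfrak p$ ``by iterating complete primeness'', but Definition~\ref{cprime} carries the extra hypothesis $P*\mathfrak p\subseteq\mathfrak p$. For a mere \emph{right} ideal $\mathfrak p$ there is no reason that left $*$-multiplication by $g$ should preserve $\mathfrak p$, so the induction does not start. In particular, in this non-commutative setting the radical $\sqrt{I}$ is \emph{defined} as an intersection of completely prime right ideals and is not known to coincide with $\{g:\ g^{*m}\in I\ \text{for some}\ m\}$; showing $g^{*m}\in I$ would therefore not finish the argument even if the substitution step succeeded. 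The substitution itself is also problematic: replacing $q_{n+1}$ by $g^{-*}=(g^s)^{-1}g^c$ and ``clearing $*$-denominators'' is not a ring homomorphism in any evident sense, since $g^{-*}$ need not $*$-commute with the coefficients appearing in the relation $1=\sum P_k*Q_k+(1-q_{n+1}*g)*R$.

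Your closing suggestion---restrict to a slice $\mathbb C_K^n$ via the Splitting Lemma (Lemma~\ref{splitting}), invoke the classical complex Nullstellensatz there, and lift back---is in fact much closer to the strategy actually used in \cite{Nul2}; it is the \emph{strong} statement, not merely a weak Nullstellensatz feeding a Rabinowitsch trick, that is obtained slice by slice. The delicate point you correctly anticipate is matching the complex radical of the ideal generated by the splitting components $F,G$ with the intersection of completely prime right ideals on the quaternionic side; that comparison, rather than any power-membership argument, is where the work lies.
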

\noindent Motivated by this connection between ideals of polynomials and subsets of $\mathbb H^n$, in \cite{Nul2} we introduced the notion of {\em slice algebraic
	sets} which defines a Zariski-type topology on $\mathbb H^n$ and we proved that every subset of the form
$\mathcal{V}(I)$ or $\mathcal V_c(I)$ is a slice algebraic set.}

The aim of the present paper is to investigate the relations between the geometric notion of {\em irreducibility},  namely the fact that 
{$\mathcal{V}(I)$ (or $\mathcal V_c(I)$) cannot be obtained as the union of two different slice algebraic sets},
 with the algebraic properties of the corresponding right ideal $I$ of {slice regular} polynomials. 

﻿
﻿In the complex setting, it is a well-known fact that an algebraic variety is irreducible if and only if its associated ideal is {\em prime}.  
In view of the statement of the Strong Nullstellensatz, a first guess would be to replace prime ideals with completely prime ideals to characterize irreducible slice algebraic sets in $\mathbb{H}^n$. 
%
However, this translation does not work:
 there are irreducible slice algebraic sets  whose corresponding ideals are not completely prime. 
 As an example consider the set $\mathbb{S}$ which is the zero locus 
 of slice regular polynomials belonging to the right ideal generated by $q^2+1$ 
 in $\mathbb H[q]$. While $\mathbb{S}$ is irreducible, 
the right ideal of $\mathbb{H}[q]$ generated by $q^2+1=(q+i)*(q-i)=:(q+i)^s$ is not {completely prime}. 
%
%
%
%
%
%
﻿
{In order to fill the gap in the correspondence between completely prime ideals and irreducible slice algebraic sets}, we introduce the new and wider class of {\em quasi prime} right ideals (see Definition \ref{qpideal}).
﻿This notion is linked to the geometry of the zero sets of slice regular polynomials and turns out to be the proper tool to
characterize the irreducibility of algebraic sets. In fact, 
in one direction we prove 
﻿\begin{theorem*} Let $I$ be a radical right ideal of $(\mathbb{H}[q_1,\ldots,q_n], +, *)$ such that $\mathcal{V}_c(I)$ is irreducible, then $I$ is a quasi prime ideal.
 \end{theorem*}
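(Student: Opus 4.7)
The plan is to mimic the classical commutative-algebra argument (irreducible variety plus radical ideal implies prime), but adapted to the slice setting, where the obstruction to this implication is precisely what the notion of quasi prime is designed to absorb.

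First I would pick two slice regular polynomials $P,Q\in\mathbb{H}[q_1,\ldots,q_n]$ with $P*Q\in I$, and aim to extract from this membership a conclusion of the form ``$P\in I$ or $Q\in I$'', modulo the spherical ambiguity illustrated by $(q+i)*(q-i)=q^2+1$. The key ingredient is the behavior of the slice product on commuting components: when restricted to $\mathbb{C}_K^n$ for a fixed $K\in\mathbb{S}$, the product $P*Q$ can be analyzed via a conjugation trick borrowed from one-variable slice theory, yielding a description of $\mathcal{V}_c(P*Q)$ as the union $\mathcal{V}_c(P)\cup\widetilde{\mathcal{V}_c(Q)}$, where $\widetilde{\mathcal{V}_c(Q)}$ is the zero locus of $Q$ after an appropriate componentwise conjugation controlled by the values of $P$. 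This step is the technical heart and would rely on the structure theorem for zeros of $*$-products in several variables from \cite{severalvariables}.

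Next I would combine $P*Q\in I$ with $\mathcal{V}_c(I)\subseteq\mathcal{V}_c(P*Q)$ to obtain the decomposition
\[
\mathcal{V}_c(I)\;=\;\bigl(\mathcal{V}_c(I)\cap\mathcal{V}_c(P)\bigr)\,\cup\,\bigl(\mathcal{V}_c(I)\cap\widetilde{\mathcal{V}_c(Q)}\bigr).
\]
Each piece is a slice algebraic set (intersection of two such sets), so the irreducibility hypothesis forces one of the two to coincide with $\mathcal{V}_c(I)$; say $\mathcal{V}_c(I)\subseteq\mathcal{V}_c(P)$. One then has to upgrade ``$P$ vanishes on $\mathcal{V}_c(I)$'' to ``$P$ vanishes on the whole $\mathcal{V}(I)$'', using axial (or circular) symmetry of slice regular polynomials: a polynomial in $\mathbb{H}[q_1,\ldots,q_n]$ that vanishes on a commuting slice of an axially symmetric set vanishes on its entire spherical completion. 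Once $P\in\mathcal{J}(\mathcal{V}(I))$, the Strong Nullstellensatz \cite[Theorem 4.3]{Nul2} and the assumption that $I$ is radical give $P\in\sqrt{I}=I$, which is exactly the conclusion required by the definition of quasi primality.

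The main obstacle I expect is the ``spherical'' case, where neither $P$ nor the conjugate-transformed $Q$ ends up in $I$ because the factorization is of the kind $(q+i)*(q-i)$: here the two pieces $\mathcal{V}_c(P)$ and $\widetilde{\mathcal{V}_c(Q)}$ can coincide on every $\mathbb{C}_K$-slice after the conjugation, and the irreducibility argument collapses the dichotomy into a single condition. This is exactly the scenario the definition of \emph{quasi prime} (Definition \ref{qpideal}) is crafted to accommodate, so the last step of the proof is to verify that the alternative produced by the above analysis matches its formulation \emph{verbatim}, rather than the stronger alternative demanded by complete primality. A secondary technical point is to check that the intersection $\mathcal{V}_c(I)\cap\widetilde{\mathcal{V}_c(Q)}$ really is a slice algebraic set in the sense of \cite{Nul2}, which requires expressing $\widetilde{Q}$ as a slice regular polynomial; this can be done using the formal properties of the slice product and of the conjugate $P^c$.
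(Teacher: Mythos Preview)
Your overall strategy is the same as the paper's, but there is a real gap in the way you set up the second piece of the decomposition. The set you call $\widetilde{\mathcal{V}_c(Q)}$ --- the locus of points $\mathbf a$ with commuting components at which $Q\bigl(P(\mathbf a)^{-1}\mathbf a\,P(\mathbf a)\bigr)=0$ --- is \emph{not} the zero set of a slice regular polynomial: the conjugating element $P(\mathbf a)$ varies with $\mathbf a$, and the function $\mathbf a\mapsto Q\bigl(P(\mathbf a)^{-1}\mathbf a\,P(\mathbf a)\bigr)$ is only a slice-rational expression (namely $P(\mathbf a)^{-1}(P*Q)(\mathbf a)$). So your ``secondary technical point'' is actually the main obstacle: you cannot write $\mathcal V_c(I)\cap\widetilde{\mathcal V_c(Q)}$ as $\mathcal V_c(I_2)$ for some right ideal $I_2$, which is exactly what the definition of reducibility requires.

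The paper resolves this by enlarging $\widetilde{\mathcal V_c(Q)}$ to its spherical closure. If $P(\mathbf a)\neq0$ and $Q\bigl(P(\mathbf a)^{-1}\mathbf a\,P(\mathbf a)\bigr)=0$, then $Q$ has a zero on the arranged sphere $\mathbb S_{\mathbf a}$, hence $Q^s$ vanishes on all of $\mathbb S_{\mathbf a}$ and in particular at $\mathbf a$. Thus the correct second piece is $V_2=\mathcal V_c(I)\cap\mathcal V_c(\langle Q^s\rangle)=\mathcal V_c\bigl(I+\langle Q^s\rangle\bigr)$, which is manifestly of the required form. With this substitution the irreducibility dichotomy becomes exactly: either $\mathcal V_c(I)\subseteq\mathcal V_c(\langle P\rangle)$, giving $P\in\mathcal J(\mathcal V_c(I))=\sqrt I=I$; or $\mathcal V_c(I)\subseteq\mathcal V_c(\langle Q^s\rangle)$, giving $Q^s\in I$ --- and this is precisely the quasi-prime alternative, with no residual ``spherical case'' to handle separately. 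Note also that your upgrade step from $\mathcal V_c(I)$ to $\mathcal V(I)$ is unnecessary: by Proposition~\ref{vanishing} one already has $\mathcal J(\mathcal V_c(I))=\mathcal J(\mathcal V(I))=\sqrt I$.
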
  
\noindent To provide a converse of the previous result, starting from a right ideal $I$, we introduce 
the {\em symmetrized ideal} $\mathcal{S}(I)$ as the right ideal generated by the symmetrizations $P^s$ (see Definition \ref{R-coniugata2}) of slice regular polynomials $P$ belonging to the ideal $I$.  \\
Given a set $Z_c\subseteq \mathbb H^n$ of points with commuting components, we define its {\em symmetrization} as $\mathbb{S}_{Z_c}:=\bigcup_{\textbf{{\em a}} \in Z_c}\mathbb S_{\textbf {\em a}}$, where
	 $\mathbb S_{\textbf {\em a}}=\{ (ga_1g^{-1},\ldots, ga_ng^{-1}) \ : \ g\in\mathbb{H}\setminus\{0\}\}$  if $\textbf {\em a}=(a_1,\ldots, a_n)$.
{	Symmetrized ideals and symmetrized subsets are related by the following result.
	\begin{theorem*}
		Let $I$ be a right ideal of $\mathbb H[q_1,\ldots,q_n]$.  Then the symmetrization of $\mathcal V_c(I)$ is a slice algebraic set, more precisely
		$	\mathbb{S}_{\mathcal{V}_c(I)}=	\mathcal{V}_c( \mathcal{S}(I)).$
			\end{theorem*}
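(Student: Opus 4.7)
I prove the two inclusions separately, using three structural properties of the symmetrization: (a) $P^s = P * P^c \in I$ whenever $P \in I$, by right-ideal closure, so $\mathcal{S}(I) \subseteq I$; (b) $P^s$ has real coefficients (since $\sum_{\boldsymbol{l}+\boldsymbol{m}=\boldsymbol{k}} a_{\boldsymbol{l}} \overline{a_{\boldsymbol{m}}}$ equals its own conjugate), hence $P^s(g\mathbf{a}g^{-1}) = g P^s(\mathbf{a}) g^{-1}$ at any commuting-components point $\mathbf{a}$; (c) for non-real $\mathbf{b} = \alpha + \beta L \in \mathbb{C}_L^n$, parametrising $\mathbb{S}_{\mathbf{b}}$ by $K \in \mathbb{S}$ via $\mathbf{b}_K := \alpha + \beta K$, every polynomial expands as $P(\mathbf{b}_K) = A_P + K B_P$ with $A_P, B_P \in \mathbb{H}$ (each monomial $\prod_i(\alpha_i + \beta_i K)^{l_i}$ lies in $\mathbb{R} + \mathbb{R}K$). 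From (a), $\mathcal{V}_c(I) \subseteq \mathcal{V}_c(\mathcal{S}(I))$; from (b), $\mathcal{V}_c(\mathcal{S}(I))$ is stable under the sphere action, so $\mathbb{S}_{\mathcal{V}_c(I)} \subseteq \mathbb{S}_{\mathcal{V}_c(\mathcal{S}(I))} = \mathcal{V}_c(\mathcal{S}(I))$, which is the forward inclusion.

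For the converse, let $\mathbf{b} \in \mathcal{V}_c(\mathcal{S}(I))$. If $\mathbf{b} \in \mathbb{R}^n$, then $\mathbb{S}_{\mathbf{b}} = \{\mathbf{b}\}$ and $P^s(\mathbf{b}) = |P(\mathbf{b})|^2 = 0$ yields $P(\mathbf{b}) = 0$ for every $P \in I$. In the non-real case, (c) shows that the zero locus of each $P \in I$ in $\mathbb{S}_{\mathbf{b}}$ is either a single point $\mathbf{b}_{K_P}$ with $K_P = -A_P B_P^{-1} \in \mathbb{S}$, the whole sphere (when $A_P = B_P = 0$), or empty, and the hypothesis $P^s(\mathbf{b}) = 0$ excludes the empty case. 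The crux is to show that all these non-trivial zero loci share a common point. Suppose, towards contradiction, that $P_1, P_2 \in I$ admit unique zeros at $K_1 \neq K_2$. For every $c \in \mathbb{H}$ the right-combination $P_1 + P_2 c$ still lies in $I$ and evaluates to $(A_1 + A_2 c) + K(B_1 + B_2 c)$ at $\mathbf{b}_K$; using $A_i = -K_i B_i$, its candidate zero is $\varphi(c) := (K_1 B_1 + K_2 B_2 c)(B_1 + B_2 c)^{-1}$. Since $\mathbb{S}$ is a $2$-sphere inside the $4$-dimensional $\mathbb{H}$ and $\varphi$ is a non-constant $\mathbb{H}$-valued M\"obius-type map (with $\varphi(0) = K_1$ and $\varphi(c) \to K_2$ as $c \to \infty$ along the reals), a generic choice of $c$ yields $\varphi(c) \notin \mathbb{S}$; for such a $c$, $P_1 + P_2 c$ has no zero on $\mathbb{S}_{\mathbf{b}}$, contradicting $(P_1 + P_2 c)^s(\mathbf{b}) = 0$. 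Hence all unique zeros coincide at a single $\mathbf{a}_0 = \mathbf{b}_{K_0}$, and every $P \in I$ vanishing on the whole sphere also vanishes at $\mathbf{a}_0$; thus $\mathbf{a}_0 \in \mathcal{V}_c(I)$ and $\mathbf{b} \in \mathbb{S}_{\mathbf{a}_0} \subseteq \mathbb{S}_{\mathcal{V}_c(I)}$.

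The main obstacle is this common-zero argument in the backward direction: the affine expansion $P(\mathbf{b}_K) = A + K B$ combined with a non-degeneracy analysis of the M\"obius map $c \mapsto \varphi(c)$ is the delicate step, as it is what exhibits a polynomial in $I$ without zeros on $\mathbb{S}_{\mathbf{b}}$ needed to contradict the symmetrised-zero hypothesis.
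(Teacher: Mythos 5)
Your proposal attacks the backward inclusion the same way the paper does in spirit --- observing that the right ideal $I$ is closed under right combinations $P_1 + P_2*c$ (the paper uses $P_1*\alpha + P_2*\beta$) and deriving a contradiction from the requirement that the symmetrization of every such combination must vanish on $\mathbb{S}_{\mathbf b}$. The difference is how the contradiction is extracted. The paper works directly in quaternionic arithmetic: it evaluates the cross term of $(P_1*\alpha + P_2*\beta)^s$ at the zero $\mathbf a_1$ of $P_1$, deduces that the unique zero $\mathbf b_1$ of $P_1^c$ must equal $\overline{\alpha}^{-1}\beta^{-1}P_2(\mathbf a_1)^{-1}\mathbf a_1 P_2(\mathbf a_1)\beta\overline{\alpha}$ for \emph{all} $\alpha,\beta$, and then varying $\beta$ forces $\mathbf b_1$ to commute with every quaternion, hence lie in $\mathbb R^n$ --- contradicting $\mathbf a\notin\mathbb R^n$. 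You instead parametrize the sphere affinely as $\mathbf b_K = \alpha + \beta K$, write $P(\mathbf b_K)=A_P+KB_P$, and try to show that for some $c$ the candidate zero $\varphi(c)=(K_1B_1+K_2B_2c)(B_1+B_2c)^{-1}$ of $P_1+P_2*c$ falls outside $\mathbb{S}$. This reparametrization is correct and is a viable route.

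However, there is a genuine gap at the crux. You justify ``a generic choice of $c$ yields $\varphi(c)\notin\mathbb{S}$'' by saying $\varphi$ is a non-constant M\"obius-type map; but non-constancy alone does not rule out that $\varphi$ carries its whole domain into $\mathbb{S}$ (a non-constant real-analytic map can certainly have image in a $2$-sphere, and both anchor values $\varphi(0)=K_1$ and the limit $K_2$ already lie in $\mathbb{S}$, so they give you nothing). You need to exhibit a concrete $c$ witnessing $\varphi(c)\notin\mathbb{S}$. This is easy once you look for it: take $c=B_2^{-1}B_1$ (legal since $B_1,B_2\ne 0$ when $P_1,P_2$ have unique zeros, and then $B_1+B_2c=2B_1\ne 0$); one computes $\varphi(B_2^{-1}B_1)=(K_1+K_2)B_1(2B_1)^{-1}=(K_1+K_2)/2$, which is purely imaginary of modulus $<1$ whenever $K_1\ne K_2$, hence not in $\mathbb{S}$. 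With this supplied, your argument closes. A second, minor omission: you should dispose separately of the degenerate case in which \emph{every} $P\in I$ vanishes on all of $\mathbb{S}_{\mathbf b}$, in which case there are no ``unique-zero'' polynomials and no $K_0$ to single out, but the conclusion $\mathbf b\in\mathcal V_c(I)\subseteq\mathbb{S}_{\mathcal V_c(I)}$ is immediate.
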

	}
\noindent Using this relevant fact, we are able to prove the following 

\begin{theorem*}
	Let $I$ be a {quasi} prime {and radical} right ideal of $\mathbb H[q_1,\ldots,q_n]$. If the radical of $\mathcal S(I)$ is quasi prime, then $\mathcal V_c(I)$ is irreducible.
\end{theorem*}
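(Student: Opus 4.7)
The plan is to proceed by contradiction. Suppose $\mathcal V_c(I)=Z_1\cup Z_2$, where $Z_1,Z_2$ are slice algebraic sets properly contained in $\mathcal V_c(I)$. The overall strategy is to use the established identity $\mathbb S_{\mathcal V_c(I)}=\mathcal V_c(\mathcal S(I))$ to transport the decomposition to the symmetrized ideal, exploit the quasi-primeness of $\sqrt{\mathcal S(I)}$ there, and finally descend back to $\mathcal V_c(I)$ using the quasi-primeness of $I$.

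\emph{Symmetrization step.} Writing each $Z_j$ as $\mathcal V_c(K_j)$ for some right ideal $K_j\supseteq I$ and applying the symmetrization theorem to $I$, $K_1$, and $K_2$, one gets
\[\mathcal V_c(\mathcal S(I))=\mathbb S_{\mathcal V_c(I)}=\mathbb S_{Z_1}\cup \mathbb S_{Z_2}=\mathcal V_c(\mathcal S(K_1))\cup \mathcal V_c(\mathcal S(K_2)).\]
If both $\mathbb S_{Z_j}$ were proper subsets of $\mathcal V_c(\mathcal S(I))$, the Strong Nullstellensatz would supply symmetric polynomials $P_j\in \sqrt{\mathcal S(K_j)}\setminus \sqrt{\mathcal S(I)}$ (symmetric because they come from symmetrized ideals). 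Their slice product $P_1*P_2$ vanishes on $\mathcal V_c(\mathcal S(I))$ and, by spherical invariance of zero sets of symmetrically generated ideals, on all of $\mathcal V(\mathcal S(I))$; hence $P_1*P_2\in\sqrt{\mathcal S(I)}$, and quasi-primeness of $\sqrt{\mathcal S(I)}$ contradicts the choice of the $P_j$. We may therefore assume, up to relabeling, that $\mathbb S_{Z_1}=\mathcal V_c(\mathcal S(I))=\mathbb S_{\mathcal V_c(I)}$.

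\emph{Descent step.} It remains to upgrade this equality of symmetrizations to $Z_1=\mathcal V_c(I)$. Suppose for contradiction that $Z_1\subsetneq \mathcal V_c(I)$; since $Z_2\subsetneq \mathcal V_c(I)$ and $I=\sqrt I$ is radical, the Nullstellensatz supplies polynomials $P\in\mathcal J(Z_1)\setminus I$ and $Q\in \mathcal J(Z_2)\setminus I$. Their slice product $P*Q$ vanishes on $Z_1\cup Z_2=\mathcal V_c(I)$; combined with the radicality of $I$ and the Nullstellensatz, this places $P*Q\in I$. Quasi-primeness of $I$ then forces $P\in I$ or $Q\in I$, the required contradiction.

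\emph{Main obstacle.} The crux lies in coordinating the two hypotheses and in reconciling the Nullstellensatz (stated for $\mathcal V$) with the commuting-components set $\mathcal V_c$: in the symmetrization step this is handled by the spherical invariance of $\mathcal V(\mathcal S(I))$, and in the descent step by arranging slice-product witnesses whose vanishing on $\mathcal V_c(I)$ can be promoted to membership in $I$ via radicality. The two quasi-primeness assumptions play complementary roles—the one on $\sqrt{\mathcal S(I)}$ controls decompositions at the spherically symmetric level, while the one on $I$ permits descending from spheres back to individual points.
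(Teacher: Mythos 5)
Your high-level plan correctly mirrors the paper's two-stage structure: first establish that $\mathbb S_{\mathcal V_c(I)}=\mathcal V_c(\mathcal S(I))$ is irreducible (the paper does this via Corollary \ref{coro} applied to $\sqrt{\mathcal S(I)}$), then descend to $\mathcal V_c(I)$ (the paper does this via Theorem \ref{cpirred}). However, both of your steps contain gaps, and the one in the descent step is fatal.

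\textbf{Descent step.} You assert that ``Their slice product $P*Q$ vanishes on $Z_1\cup Z_2=\mathcal V_c(I)$.'' This is false for the $*$-product, and it is precisely the difficulty the whole spherical-symmetry machinery of the paper is designed to circumvent. By Proposition \ref{prodstar}, if $\textbf{\em a}\in Z_2\setminus Z_1$ and $P(\textbf{\em a})\neq 0$, then $P*Q(\textbf{\em a})=P(\textbf{\em a})\,Q\bigl(P(\textbf{\em a})^{-1}\textbf{\em a}\,P(\textbf{\em a})\bigr)$, and the rotated point $P(\textbf{\em a})^{-1}\textbf{\em a}\,P(\textbf{\em a})$ lies on $\mathbb S_{\textbf{\em a}}$ but need not lie in $Z_2$; so $Q$ need not vanish there. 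The paper's Theorem \ref{cpirred} handles exactly this: it uses the already-established irreducibility of $\mathbb S_{\mathcal V_c(I)}$ to force, after replacing $\mathcal V_c(I_1)\setminus \mathcal V_c(I_2)$ by the union of spheres $V_1=\mathbb S_{\mathcal V_c(I_1)\setminus\mathcal V_c(I_2)}$, a decomposition $\mathcal V_c(I)=V_1\cup\mathcal V_c(I_2)$ in which $V_1$ is spherically symmetric. Only then does the $P*K$ argument work (the rotated point stays inside $V_1$), and even then the conclusion is not an immediate contradiction but rather that $I=\mathcal J(V_1)$ is two-sided and hence $\mathcal V_c(I)$ itself has spherical symmetry, so coincides with the irreducible set $\mathbb S_{\mathcal V_c(I)}$. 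None of this structure appears in your descent step, so as written it does not go through.

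\textbf{Symmetrization step.} Here the rotation issue is actually avoidable (the sets $\mathbb S_{Z_j}$ are spherically symmetric and $P_j\in\sqrt{\mathcal S(K_j)}=\mathcal J(\mathbb S_{Z_j})$ vanishes on all of $\mathbb S_{Z_j}$), so $P_1*P_2$ really does vanish on $\mathcal V_c(\mathcal S(I))$. But the contradiction via quasi-primeness of $\sqrt{\mathcal S(I)}$ is not as automatic as you claim: quasi-primeness yields $P_1\in\sqrt{\mathcal S(I)}$ \emph{or} $P_2^s\in\sqrt{\mathcal S(I)}$, and the second alternative is not a direct contradiction to having chosen $P_2\notin\sqrt{\mathcal S(I)}$ --- $P_2^s$ can vanish on every arranged sphere of $\mathbb S_{\mathcal V_c(I)}$ without $P_2$ vanishing on all of it. Your parenthetical ``symmetric because they come from symmetrized ideals'' gestures at the fix (if $P_2$ is a symmetrization, it has real coefficients, $P_2^s=P_2^{*2}$, and then $P_2^{*2}(\textbf{\em a})=P_2(\textbf{\em a})^2$ for commuting $\textbf{\em a}$, so $P_2^s\in\sqrt{\mathcal S(I)}$ does force $P_2\in\sqrt{\mathcal S(I)}$), but you neither make this precise nor justify that such a choice of $P_j$ is always available. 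The paper avoids these subtleties by routing through Theorem \ref{nosfere}, where the witness $H$ is taken directly in $\mathcal J(\mathcal V_c(I_1))\setminus I$ and the spherical symmetry of the other piece does the work.

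In short, the architecture of your argument matches the paper's, but the decisive lemma --- Theorem \ref{cpirred}, with its construction of the spherically symmetric $V_1$ --- is missing, and its absence is what makes the descent step break.
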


\noindent Furthermore, we prove that the irreducibility of $\mathcal{V}_c(I)$ implies the irreducibility of $\mathcal{V}(I)$ (see Proposition \ref{vc-v}) and provide examples that the converse is not true.

	\noindent We then consider the case of principal right ideals, namely those generated by a single polynomial. Using factorization properties of the generator, we completely characterize principal ideals whose corresponding algebraic sets are irreducible. 
	\begin{theorem*} A principal right ideal $I$ of $\mathbb H[q_1,\ldots,q_n]$ is quasi prime if and only if $I$ is radical
	 and $\mathcal{V}_c(I)$ is irreducible.
	\end{theorem*}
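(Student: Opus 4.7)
One direction ($\Leftarrow$) is a direct specialization of the first of the three theorems displayed above: if $I$ is radical and $\mathcal{V}_c(I)$ is irreducible, then $I$ is quasi prime, with no restriction to principal ideals required. So all the work lies in $(\Rightarrow)$.

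For the forward direction, assume $I=(P)$ is a principal quasi prime right ideal. I would split the task into two sub-goals: showing $I$ is radical, and then deducing that $\mathcal{V}_c(I)$ is irreducible. For radicality, I would take $Q \in \sqrt{I}$ and use the Strong Nullstellensatz $\sqrt{I}=\mathcal{J}(\mathcal{V}(I))$ to conclude that $Q$ vanishes on $\mathcal{V}(P)$. Then, using the factorization theory of slice regular polynomials (cleanest when applied through the symmetrization $P^s$, whose coefficients are central), I would produce a $*$-decomposition of the form $Q * Q' \in I$ with $Q' \notin I$, so that the quasi prime hypothesis forces $Q \in (P)$, i.e.\ $\sqrt{I}\subseteq I$.

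Once radicality is in hand, I would invoke the third theorem displayed above, which reduces the irreducibility of $\mathcal{V}_c(I)$ to showing that $\sqrt{\mathcal{S}(I)}$ is quasi prime. Principality is the key leverage here: $\mathcal{S}(I)$ is itself generated by the single polynomial $P^s$, whose coefficients behave as real/central elements, so its $*$-factorizations resemble ordinary commutative factorizations. A putative witness of failure of quasi primeness of $\sqrt{\mathcal{S}(I)}$—a product $A * B$ lying in the ideal with neither factor in it—should be lifted through the symmetrization map back into $I$, producing a contradiction with the quasi primeness of the original $I$.

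The main obstacle is executing this last pull-back step in several quaternionic variables. In one variable, the theory of spherical versus isolated zeros and the near-uniqueness of $*$-factorization make the translation between $(P)$ and $(P^s)$ straightforward; in $\mathbb{H}[q_1,\ldots,q_n]$ factorization is more delicate, and the symmetrization is harder to invert since $\mathcal S$ may a priori enlarge the ideal in subtle ways. Principality is what makes the argument feasible, because it reduces every divisibility statement to a relation involving the single generator $P$ (respectively $P^s$), thereby avoiding the combinatorial complications that would arise for ideals with many generators.
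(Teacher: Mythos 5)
Your $\Leftarrow$ direction is exactly the paper's: specialize Theorem \ref{VirredIprime}. The substance is in $\Rightarrow$, and here your plan diverges from the paper in a way that leaves a genuine gap. The paper does not attack radicality and irreducibility separately as you propose; it first proves an intermediate \emph{structural dichotomy} (item $(2)$ of Theorem \ref{principal}): if $\langle H\rangle$ is quasi prime then $H$ is irreducible, or $H=Q^s$ with $Q$ irreducible and $Q\neq Q^c$. This is obtained by a short factorization argument: write $H=P*Q$ with $Q$ a non-constant irreducible factor, apply the quasi prime condition to this product, and analyze the two resulting cases. Once the dichotomy is in hand, radicality falls out concretely --- irreducible generator gives a completely prime (hence radical) ideal by Proposition \ref{irrcp}, and $\langle Q^s\rangle$ with $Q\neq Q^c$ is radical by Proposition \ref{Hsquasiprime} --- and irreducibility of $\mathcal{V}_c(I)$ follows from Proposition \ref{Hsquasiprime} plus Corollary \ref{coro} and Theorem \ref{cpirred} (which is essentially Corollary \ref{coro3} unwound). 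Your plan skips this dichotomy, and that is where the trouble lies.

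Two concrete problems. First, in your radicality step you say you would produce $Q*Q'\in I$ with $Q'\notin I$ and let quasi primeness force $Q\in I$; but the quasi prime condition only concludes $Q\in I$ when $(Q')^{s}\notin I$, which is strictly stronger than $Q'\notin I$. Moreover you offer no concrete construction of $Q'$ --- ``use the factorization theory'' is exactly the content that needs to be supplied, and the paper supplies it by establishing the dichotomy rather than by a direct witness argument. Radicality is genuinely not automatic here: the paper explicitly observes that if $H=H^{c}$ then $\langle H^{s}\rangle=\langle H^{*2}\rangle$ is \emph{not} radical, which is why the condition $Q\neq Q^{c}$ appears in the dichotomy; your argument does not see this obstruction and would need to. Second, for irreducibility you want to show $\sqrt{\mathcal{S}(I)}=\sqrt{\langle P^{s}\rangle}$ is quasi prime by a ``lift the witness back'' argument, but the paper's only route to this (Proposition \ref{Hsquasiprime}) presupposes that $P$ is irreducible with $P\neq P^{c}$ --- once again the missing dichotomy. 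Your correct observation that $\mathcal{S}(\langle P\rangle)=\langle P^{s}\rangle$ is used by the paper too, but it is not enough on its own: without knowing the factorization shape of $P$, neither radicality nor the quasi primeness of $\sqrt{\langle P^s\rangle}$ is within reach.
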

%
%
%

%
	
\noindent The paper is organized as follows: after recalling  in Section 2 some  background results concerning ideals of
$(\mathbb{H}[q_1,\ldots, q_n], +,*)$,
together with several preliminary results and definitions, in Section 3 we introduce the two notions of symmetrization of an ideal and of a slice algebraic set and we link these two concepts in Theorem \ref{simmalg}. In Section 4 we further investigate the relations between algebraic sets and ideals, and in Section 5
we provide necessary and sufficient conditions for the irreducibility of a slice algebraic set (see 
 Theorem \ref{VirredIprime}, Theorem \ref{nosfere}, Theorem \ref{cpirred} and  Corollary \ref{coro3}). 
In Section 6 we focus our attention on principal right ideals giving, in Theorem \ref{principal}, a complete characterization of right ideals $I$ whose corresponding slice algebraic sets $\mathcal V_c(I)$ are irreducible.

﻿
﻿
	
﻿
\section{Preliminaries}\label{prel}
\noindent Let $\mathbb{H}{=\mathbb{R}+i\mathbb{R}+j\mathbb{R}+k\mathbb{R}}$ denote the skew field of
quaternions and let $\mathbb{S}=\{J \in \mathbb{H} \ : \ J^2=-1\}$ be
the two dimensional sphere of quaternionic imaginary units.  Then
\[ \mathbb{H}=\bigcup_{J\in \mathbb{S}}(\mathbb{R}+\mathbb{R} J),  
\]
where, for any $J\in \mathbb{S}$, the ``slice'' $\mathbb{C}_J:=\mathbb{R}+\mathbb{R} J$ can be
identified with the complex plane $\mathbb{C}$.  { Hence, any $q\in \mathbb{H}$ can be
  written as $q=x+yJ$ with $x,y \in \mathbb{R}$ and $J \in
  \mathbb{S}$. The {\it real part }of $q$ is ${\rm Re}(q)=x$ and its
         {\it imaginary part} is ${\rm Im}(q)=yJ$; the {\it conjugate}
         of $q$ is $\bar q:={\rm Re}(q)-{\rm Im}(q)$.  For any non-real
         quaternion $a\in \mathbb{H}\setminus \mathbb{R}$ we will
         denote by $J_a:=\frac{{\rm Im}(a)}{|{\rm Im}(a)|}\in
         \mathbb{S}$ and by $\mathbb{S}_a:=\{{\rm Re}(a)+J |{\rm
           Im}(a)| \ : \ J\in \mathbb{S}\}$. If $a\in \mathbb{R}$,
         then $J_a$ is any imaginary unit.}
Similarly, if $\textbf{\em a}=(a_1,\ldots,a_n)\in \mathbb H^n$, then the
	 {\em spherical set} \[\mathbb S_{(a_1,\ldots,a_n)}=\mathbb S_{\textbf{\em a}}
	 :=\{(g^{-1}a_1g, \ldots, g^{-1}a_ng) : g \in \mathbb{H}\setminus\{0\} \}\]
is the set obtained by simultaneously rotating each coordinate of the point 
$\textbf{\em a}=(a_1,\ldots,a_n)$. If there exists $J\in \mathbb S$ such that $\textbf{\em a}=(a_1,\ldots,a_n) \in \mathbb C_J^n$, then $\mathbb S_{\textbf{\em a}}$ is called {\em arranged spherical set}.
﻿
Slice regular  polynomials in $n$ quaternionic variables are polynomial functions
$P:{\mathbb{H}^n}  \to \mathbb{H}$, of the form
	\[	 (q_1,\ldots, q_n)\mapsto P(q_1,\ldots, q_n)=
	\sum_{ \substack{\ell_1=0,\ldots, L_1  \\
	\cdots\\ \ell_n=0,\ldots, L_n} }{q_1}^{\ell_1}\cdots {q_n}^{\ell_n}a_{\ell_1,\ldots,\ell_n} \]
	with $a_{\ell_1,\ldots,\ell_n}\in\mathbb{H}$, 
where $\deg_{{q_{\ell_j}}}P:=L_j$.
﻿

Slice regular polynomial functions of several variables can be endowed with an
appropriate notion of product, the so called {\em slice product}, that
will be denoted by the symbol $*$.
Let us recall here how it works for
slice regular polynomials in two variables.
\begin{definition}
	If $P(q_1,q_2)=\sum_{ \substack{n=0,\ldots,N_1  \\ m=0,\ldots,N_2} } q_1^nq_2^m a_{n,m}$ and
        $Q(q)=\sum_{ \substack{n=0,\ldots,L_1  \\ m=0,\ldots,L_2} }q_1^nq_2^m b_{n,m}$ are two
        slice regular polynomials,
        then the {\em $*$-product} of $P$ and $Q$ is the slice regular polynomial
        defined by
$$
P*Q (q_1,q_2):=\sum_{ \substack{n=0,\ldots,N_1+L_1  \\ m=0,\ldots,N_2+L_2}}q_1^nq_2^m\sum_{ \substack{r=0,\ldots,n  \\ s=0,\ldots,m} }a_{r,s} b_{n-r,m-s}
$$
\end{definition}
﻿
﻿
%
﻿
\noindent Notice that the $*$ product is not commutative in general.
 In this way, $(\mathbb{H}[q_1,\ldots,q_n], +,*)$ has the structure of a non-commutative ring.
﻿
\noindent For points with commuting components, the $*$-product {has the following explicit expression.}
\begin{proposition}[{\cite[Proposition 3.1]{Nul2}}]\label{prodstar} 
	Let ${  \textbf{a}}=(a_1,\ldots,a_n)\in \mathbb H^n$ be
	such that 
	$a_la_m=a_ma_l$ for any $1\leq l,m\leq n$
	and let $P,Q \in \mathbb H[q_1,\ldots,q_n]$. Then
	\[P*Q(\textbf{a})=\left\{\begin{array}{lr}
		0 & \text {if}\ P(\textbf{a})=0\\
		P(\textbf{a})\cdot Q(P(\textbf{a})^{-1}a_1P(\textbf{a}), P(\textbf{a})^{-1}a_2P(\textbf{a}),\ldots,P(\textbf{a})^{-1}a_nP(\textbf{a})) & \text{if}\ P(\textbf{a})\neq0
	\end{array}\right.\] 	
\end{proposition}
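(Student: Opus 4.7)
The plan is to expand both polynomials in multi-index form, apply the convolution definition of the $*$-product (which is the natural extension to $n$ variables of the two-variable formula recalled just above), and then exploit the commutativity of the components of $\textbf{a}$ to factor out $P(\textbf{a})$ from the resulting sum. The final step is to recognize the remaining expression as an evaluation of $Q$ at the point obtained by conjugating $\textbf{a}$ by $P(\textbf{a})$.

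Concretely, writing $P(q_1,\ldots,q_n)=\sum_\alpha q_1^{\alpha_1}\cdots q_n^{\alpha_n}\,a_\alpha$ and $Q(q_1,\ldots,q_n)=\sum_\beta q_1^{\beta_1}\cdots q_n^{\beta_n}\,b_\beta$, the definition of the $*$-product gives
\[
P*Q\,(q_1,\ldots,q_n)=\sum_{\alpha,\beta} q_1^{\alpha_1+\beta_1}\cdots q_n^{\alpha_n+\beta_n}\,a_\alpha b_\beta.
\]
Evaluating at $\textbf{a}=(a_1,\ldots,a_n)$ with pairwise commuting components, I can split each factor $a_j^{\alpha_j+\beta_j}$ as $a_j^{\beta_j}a_j^{\alpha_j}$ and, using that the $a_j$'s commute among themselves, reorder to obtain
\[
P*Q(\textbf{a})=\sum_\beta a_1^{\beta_1}\cdots a_n^{\beta_n}\Bigl(\sum_\alpha a_1^{\alpha_1}\cdots a_n^{\alpha_n}\,a_\alpha\Bigr)b_\beta=\sum_\beta a_1^{\beta_1}\cdots a_n^{\beta_n}\,P(\textbf{a})\,b_\beta.
\]
Note that the coefficient quaternions $a_\alpha,b_\beta$ are \emph{not} assumed to commute with the $a_j$'s, but the rearrangement above moves only powers of the $a_j$'s among themselves, so it is legitimate.

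From this identity both cases follow at once. If $P(\textbf{a})=0$, every summand vanishes and $P*Q(\textbf{a})=0$. If $P(\textbf{a})\neq 0$, setting $\alpha:=P(\textbf{a})\in\mathbb H^\times$ and inserting $\alpha\alpha^{-1}$ on the left gives
\[
P*Q(\textbf{a})=\alpha\sum_\beta\alpha^{-1}a_1^{\beta_1}\cdots a_n^{\beta_n}\alpha\cdot b_\beta=\alpha\sum_\beta(\alpha^{-1}a_1\alpha)^{\beta_1}\cdots(\alpha^{-1}a_n\alpha)^{\beta_n}\,b_\beta,
\]
where I used that inner conjugation $x\mapsto\alpha^{-1}x\alpha$ is a ring automorphism of $\mathbb H$, hence respects both products and integer powers. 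The right-hand sum is precisely $Q(\alpha^{-1}a_1\alpha,\ldots,\alpha^{-1}a_n\alpha)$, yielding the claimed formula.

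The only delicate point is the bookkeeping of which commutations are permissible: the $a_j$'s commute with one another by hypothesis but not with the coefficients $a_\alpha$, so every reordering must keep $a_\alpha$ (respectively $b_\beta$) on the right of its block of monomial factors. Once this is observed, the argument is essentially a one-line manipulation, and no further machinery beyond the defining convolution of $*$ is required.
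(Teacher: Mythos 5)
Your argument is correct and complete, and it is the natural way to prove this formula: expand via the convolution definition of $*$, use the pairwise commutativity of the $a_j$'s (and only among themselves — you are right to flag that the coefficients $a_\alpha$, $b_\beta$ must stay in place) to isolate $P(\textbf{\em a})$ on the right of each monomial block, and then, when $P(\textbf{\em a})\neq 0$, insert $P(\textbf{\em a})P(\textbf{\em a})^{-1}$ and use that conjugation by a nonzero quaternion is a ring automorphism to rewrite the remaining sum as $Q$ evaluated at the conjugated point. The case $P(\textbf{\em a})=0$ falls out immediately from the same identity. This paper does not reproduce a proof but cites the result from \cite{Nul2}; your computation is precisely the expected one. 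One small stylistic remark: you reuse $\alpha$ both as a multi-index and as shorthand for the scalar $P(\textbf{\em a})$, which is a notational collision worth avoiding even though it causes no mathematical error.
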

﻿
﻿
{
﻿
﻿
\noindent As in the one-variable case, it is possible to introduce two
        operators on slice regular  functions. For the sake of
        simplicity, we define them only for polynomials of two
        quaternionic variables.
\begin{definition}\label{R-coniugata2}
	Let $P(q_1,q_2)=
	\sum_{ \substack{n=0,\ldots,N_1  \\ m=0,\ldots,N_2} } 
	q_1^nq_2^ma_{n,m}$ be
        a slice regular polynomial. Then the \emph{regular conjugate} of $P$ is the
        slice regular polynomial defined  by
	\[P^c(q_1,q_2)=\sum_{ \substack{n=0,\ldots,N_1  \\ m=0,\ldots,N_2} } 
	q_1^nq_2^m\overline{a_{n,m}},\]
	and the
	\emph{symmetrization} of $P$ is the slice regular polynomial defined  by 
	\[P^s= P*P^c = P^c*P.\]
\end{definition}
﻿
﻿
}
%
%
%
﻿
﻿
﻿
﻿

{ 
	\noindent  We
	recall the following version of Splitting Lemma which is the several variable counterpart of Lemma 1.3 in \cite{libroGSS}.
	\begin{lemma}\label{splitting}
		Let $P$
		be a slice regular polynomial in $n$ quaternionic variables. For   any $K\in \mathbb S$ and for any $L\in\mathbb S$ orthogonal to $K$ (with respect to the standard scalar product in $\mathbb{R}^3$), there exist two complex polynomials $F,G:\mathbb C_K^{n}\to \mathbb C_K^{n} $ such that for any $(z_1,\ldots,z_n)\in \mathbb C_K^n$
		\begin{equation}\label{split}
			P(z_1,\ldots,z_n)=F(z_1,\ldots,z_n)+G(z_1,\ldots,z_n)L
		\end{equation}
		and
	\begin{equation}\label{splitconj}
		P^c(z_1,\ldots,z_n)=\overline{F(\bar z_1,\ldots,\bar z_n)}-G(z_1,\ldots,z_n)L.
	\end{equation}	
	\end{lemma}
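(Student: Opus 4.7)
The plan is to reduce the $n$-variable splitting to the well-known one-variable argument by exploiting the fact that every $z_i \in \mathbb{C}_K$ commutes with every element of $\mathbb{C}_K$. First I would fix $K,L\in\mathbb{S}$ with $L\perp K$ and observe that $\{1,K,L,KL\}$ is an $\mathbb{R}$-basis of $\mathbb{H}$, so every coefficient $a_{\ell_1,\ldots,\ell_n}\in\mathbb{H}$ of $P$ can be written uniquely as
\[
a_{\ell_1,\ldots,\ell_n}=\alpha_{\ell_1,\ldots,\ell_n}+\beta_{\ell_1,\ldots,\ell_n}L,
\qquad \alpha_{\ell_1,\ldots,\ell_n},\beta_{\ell_1,\ldots,\ell_n}\in\mathbb{C}_K.
\]
Then I define the $\mathbb{C}_K$-valued polynomials
\[
F(z_1,\ldots,z_n):=\sum_{\ell_1,\ldots,\ell_n} z_1^{\ell_1}\cdots z_n^{\ell_n}\alpha_{\ell_1,\ldots,\ell_n},\qquad
G(z_1,\ldots,z_n):=\sum_{\ell_1,\ldots,\ell_n} z_1^{\ell_1}\cdots z_n^{\ell_n}\beta_{\ell_1,\ldots,\ell_n}.
\]

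Next I would verify formula \eqref{split}: for $(z_1,\ldots,z_n)\in\mathbb{C}_K^n$ each $z_i$ commutes with every element of $\mathbb{C}_K$, so
\[
P(z_1,\ldots,z_n)=\sum z_1^{\ell_1}\cdots z_n^{\ell_n}\alpha_{\ell_1,\ldots,\ell_n}+\bigl(\sum z_1^{\ell_1}\cdots z_n^{\ell_n}\beta_{\ell_1,\ldots,\ell_n}\bigr)L=F+GL,
\]
where the last equality uses once more that $z_1^{\ell_1}\cdots z_n^{\ell_n}\beta_{\ell_1,\ldots,\ell_n}\in\mathbb{C}_K$, so it may be pulled to the left of $L$.

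For formula \eqref{splitconj} I would invoke the conjugation identity $\overline{\beta L}=-\beta L$ valid for any $\beta\in\mathbb{C}_K$ (this follows because, writing $\beta=b_1+b_2K$, one has $L\bar\beta = L(b_1-b_2K)=b_1L+b_2KL=\beta L$, using $LK=-KL$). Consequently $\overline{a_{\ell_1,\ldots,\ell_n}}=\overline{\alpha_{\ell_1,\ldots,\ell_n}}-\beta_{\ell_1,\ldots,\ell_n}L$, and evaluating $P^c$ on $\mathbb{C}_K^n$ yields
\[
P^c(z_1,\ldots,z_n)=\sum z_1^{\ell_1}\cdots z_n^{\ell_n}\overline{\alpha_{\ell_1,\ldots,\ell_n}}\;-\;G(z_1,\ldots,z_n)L.
\]
Finally I identify the first sum with $\overline{F(\bar z_1,\ldots,\bar z_n)}$: since $\overline{F(\bar z_1,\ldots,\bar z_n)}=\overline{\sum \bar z_1^{\ell_1}\cdots\bar z_n^{\ell_n}\alpha_{\ell_1,\ldots,\ell_n}}=\sum z_1^{\ell_1}\cdots z_n^{\ell_n}\overline{\alpha_{\ell_1,\ldots,\ell_n}}$ on $\mathbb{C}_K^n$, \eqref{splitconj} follows.

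The only delicate point, which is really the heart of the argument, is the sign bookkeeping in $\overline{\beta L}=-\beta L$: it relies crucially on the anticommutation $LK=-KL$ (i.e.\ the orthogonality hypothesis $L\perp K$). Everything else is a direct separation of variables using the commutativity inside the slice $\mathbb{C}_K$.
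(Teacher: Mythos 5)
Your proof is correct and follows essentially the same route as the paper's: you decompose each coefficient as $\alpha+\beta L$ with $\alpha,\beta\in\mathbb{C}_K$ using the basis $\{1,K,L,KL\}$, and for \eqref{splitconj} you use $\overline{uv}=\bar v\bar u$ together with the anticommutation $LK=-KL$ to get $\overline{\beta L}=-L\bar\beta=-\beta L$, exactly as in the paper. The only cosmetic difference is that for \eqref{split} the paper simply cites the corresponding one-variable-style lemma from an earlier reference, whereas you spell out the (immediate) verification.
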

%
%
\begin{proof}	
Equation \eqref{split} is the statement of Lemma 2.1 in \cite{Nul2}.
	\noindent As a consequence, if $P(q_1,\ldots,q_n)=\sum_{\ell_1,\ldots,\ell_n}q_1^{\ell_1}\cdots q_n^{\ell_n}a_{\ell_1,\ldots,\ell_n}$,
	since $\{1, K, L, KL\}$ is an orthonormal basis of $\mathbb H$ over $\mathbb R$,
 we can write 
	\[a_{\ell_1,\ldots,\ell_n}=\alpha_{\ell_1,\ldots,\ell_n}+\beta_{\ell_1,\ldots,\ell_n}L\]
	with $\alpha_{\ell_1,\ldots,\ell_n},\beta_{\ell_1,\ldots,\ell_n} \in \mathbb C_K$, so that the splitting components of $P$ are
	\[F(z_1,\ldots,z_n)=\sum_{\ell_1,\ldots,\ell_n}z_1^{\ell_1}\cdots z_n^{\ell_n}\alpha_{\ell_1,\ldots,\ell_n}, \ \ \text{ and }  \ \
	G(z_1,\ldots, z_n)=\sum_{\ell_1,\ldots,\ell_n}z_1^{\ell_1}\cdots z_n^{\ell_n}\beta_{\ell_1,\ldots,\ell_n}.\] 
	Thanks to the fact that for any quaternions $u,v$ we have $\overline{uv}=\bar v \bar u$ and to the fact that orthogonal imaginary units anticommute, we have
	\[\overline{a_{\ell_1,\ldots,\ell_n}}=
	\overline{\alpha_{\ell_1,\ldots,\ell_n}} + \overline{\beta_{\ell_1,\ldots,\ell_n}L}= \overline{\alpha_{\ell_1,\ldots,\ell_n}} -L\overline{\beta_{\ell_1,\ldots,\ell_n}}= \overline{\alpha_{\ell_1,\ldots,\ell_n}} -\beta_{\ell_1,\ldots,\ell_n} L.\] 	
	Hence
	\[P^c(z_1,\ldots,z_n)= 
	\sum_{\ell_1,\ldots,\ell_n}z_1^{\ell_1}\cdots z_n^{\ell_n}(\overline{\alpha_{\ell_1,\ldots,\ell_n}} -\beta_{\ell_1,\ldots,\ell_n} L)=\overline{F(\bar z_1,\ldots,\bar z_n)}-G(z_1,\ldots,z_n)L.\]

\end{proof}

}
﻿
\noindent {In what follows, we will mainly consider {\em right ideals} of $\mathbb{H}[q_1,\ldots,q_n]$. In particular, we will use the notation 
	$\langle P : \ P \in \mathcal{E}\rangle $
	for the {\em right} ideal generated in $(\mathbb H[q_1,\ldots, q_n], +, *)$ by the slice regular polynomials belonging to the subset $ \mathcal E \subset \mathbb H[q_1,\ldots, q_n]$.}
	
\begin{definition}
	
	\noindent  Given a right ideal $I$ in $\mathbb{H}[q_1,\ldots,q_n]$,
	we define $\mathcal{V}(I)$ to be the set of common zeros of $P\in I$, i.e.,
	if $Z_P\subset \mathbb{H}^n$ denotes the zero set of a slice regular  polynomial $P\in I$, then 
	\[ \mathcal{V}(I):=\bigcap_{P\in I} Z_P.\]
	
	\noindent Furthermore, we set
	
	\[ \mathcal{V}_c(I):=\mathcal{V}(I)\cap\bigcup_{K\in \mathbb{S}}\mathbb{C}_K^n.\]

\end{definition}
\noindent Notice that for any right ideal $I$ the set $\mathcal V_c(I)$  is contained in $\mathcal{V}(I)$. 
﻿
﻿
\begin{definition}\label{idealinuovi}
	Let $Z$ be a non-empty subset of $\mathbb{H}^n$.
	\noindent We denote by
	$\mathcal{J}(Z)$  the right ideal generated in $\mathbb{H}[q_1,\ldots,q_n]$
	by slice regular polynomials which vanish on $Z$,
	\[\mathcal{J}(Z):=\left\{\sum_{k=1}^N P_k*Q_k\ :\ P_k,Q_k\in\mathbb{H}[q_1,\ldots,q_n]\ {\rm with}\ {P_k}_{|_Z}\equiv 0\right\}.\] 
\end{definition}
\noindent In general 
$\mathcal{J}(Z)$ does not coincide with the set of polynomials vanishing on $Z$, but if $Z=\mathcal{ V }_c(I)$ where $I$ is a right ideal of $\mathbb{H}[q_1,\ldots,q_n]$, then it can be proved directly, using Proposition \ref{prodstar}, that
\begin{equation}\label{jvci}
	\mathcal J(\mathcal V_c(I))=\{P\in \mathbb H[q_1,\ldots,q_n] : \ P_{|_{\mathcal V_c(I)}} = 0 \}.
\end{equation}
\noindent  We can say also that, if $Z_c$ is any subset of $\mathbb H^n$ of points with commuting components $$
	\mathcal J(Z_c)=\{P\in \mathbb H[q_1,\ldots,q_n] : \  P_{|_{Z_c}} = 0 \}.$$
\noindent Furthermore, notice that if $Z'\subseteq Z$ then $\mathcal J(Z')\supseteq \mathcal J (Z)$.
﻿
\noindent Combining Theorem 4.1 and Corollary 4.2 in \cite{Nul2} we have 
\begin{proposition} \label{vanishing}
	Let $I$ be a right ideal of $\mathbb H [q_1,\ldots,q_n]$. Then
\begin{enumerate}
	\item $\mathcal{J}(\mathcal{V}_c(I))=\mathcal{J}(\mathcal{V}(I))$; 
	\item $\mathcal{J}(\mathcal{V}(I))$ coincides with the set of polynomials vanishing on $\mathcal V(I)$.
\end{enumerate}
\end{proposition}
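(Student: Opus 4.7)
The plan is to prove both (1) and (2) simultaneously by identifying $\mathcal{J}(\mathcal{V}(I))$ and $\mathcal{J}(\mathcal{V}_c(I))$ with the single object $A:=\{P\in\mathbb{H}[q_1,\ldots,q_n]\,:\,P|_{\mathcal{V}(I)}=0\}$. The scaffolding rests on equation (\ref{jvci}) together with a transfer principle (essentially Corollary 4.2 of \cite{Nul2}) asserting that a slice regular polynomial vanishes on $\mathcal{V}_c(I)$ if and only if it vanishes on $\mathcal{V}(I)$.

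First, by the monotonicity of $\mathcal{J}$ noted just before the statement, the containment $\mathcal{V}_c(I)\subseteq\mathcal{V}(I)$ gives $\mathcal{J}(\mathcal{V}(I))\subseteq\mathcal{J}(\mathcal{V}_c(I))$. Trivially $A\subseteq\{P:P|_{\mathcal{V}_c(I)}=0\}$, and by (\ref{jvci}) the latter equals $\mathcal{J}(\mathcal{V}_c(I))$.

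The core step is the transfer principle in the reverse direction, namely $\{P:P|_{\mathcal{V}_c(I)}=0\}\subseteq A$. Given a point $\mathbf{b}=(x_1+y_1J_1,\ldots,x_n+y_nJ_n)\in\mathcal{V}(I)$ with possibly non-commuting imaginary units $J_i$, I would decompose $P$ via the Splitting Lemma (Lemma \ref{splitting}) and then apply a several-variable Representation Formula to rewrite $P(\mathbf{b})$ as an explicit quaternionic combination of values $P(\tilde{\mathbf{b}})$ at points $\tilde{\mathbf{b}}$ lying in a single slice $\mathbb{C}_K^n$. One then verifies that the relevant slice-projected points belong to $\mathcal{V}(I)\cap\mathbb{C}_K^n\subseteq\mathcal{V}_c(I)$, so that the assumed vanishing of $P$ on $\mathcal{V}_c(I)$ propagates to $P(\mathbf{b})=0$.

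Once the transfer principle is secured, $A=\{P:P|_{\mathcal{V}_c(I)}=0\}=\mathcal{J}(\mathcal{V}_c(I))$. In particular $A$ is already a right ideal, so the right ideal it generates, namely $\mathcal{J}(\mathcal{V}(I))$ by definition, must coincide with $A$. This yields (2), and (1) follows at once from $\mathcal{J}(\mathcal{V}(I))=A=\mathcal{J}(\mathcal{V}_c(I))$. The hard part is the transfer principle: on $\mathcal{V}(I)\setminus\mathcal{V}_c(I)$ Proposition \ref{prodstar} is unavailable, so one cannot treat $*$-products of vanishing polynomials pointwise; one has to exploit the rigidity that $\mathbf{b}\in\mathcal{V}(I)$ comes from being a common zero of an entire right ideal (not merely of one polynomial), in order to force the slice-projections $\tilde{\mathbf{b}}$ to remain in $\mathcal{V}(I)$.
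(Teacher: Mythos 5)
The paper gives no proof of this proposition — it is stated as a combination of Theorem 4.1 and Corollary 4.2 of \cite{Nul2} — so there is no in-paper argument to compare your sketch against; what follows judges the sketch on its own terms.

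Your high-level plan is sound: reduce both (1) and (2) to the single ``transfer principle'' that a polynomial vanishing on $\mathcal V_c(I)$ must vanish on all of $\mathcal V(I)$, note that (\ref{jvci}) already identifies $\mathcal J(\mathcal V_c(I))$ with the set of polynomials vanishing on $\mathcal V_c(I)$, and observe that the easy inclusions are all in place. That is almost certainly the architecture of the argument in \cite{Nul2}. However, the sketch contains a genuine gap precisely at the step you flag as ``the hard part.'' The proposed mechanism — split $P$, apply a several-variable Representation Formula to write $P(\mathbf b)$ as a quaternionic combination of values of $P$ at $2^n$ slice points $\tilde{\mathbf b}\in\mathbb C_K^n$, then ``verify that the relevant slice-projected points belong to $\mathcal V(I)\cap\mathbb C_K^n$'' — leaves unexplained exactly the nontrivial assertion. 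The Representation Formula expresses $P(\mathbf b)$ \emph{in terms of} the values $P(\tilde{\mathbf b})$, so $P(\mathbf b)=0$ alone does not force any of the slice projections to lie in the zero set of $P$, much less in the zero set of every $Q\in I$. The appeal to ``rigidity'' from having a full right ideal is stated but not used: right-multiplication of $Q\in I$ by a constant $c$ gives $(Q*c)(\mathbf b)=Q(\mathbf b)c$, which yields no new constraint, and right-multiplication by monomials does not commute with evaluation at non-commuting $\mathbf b$ (Proposition \ref{prodstar} is unavailable there, as you note). One would instead need a structural theorem describing how membership in $\mathcal V(I)$ at a point with non-commuting components forces the corresponding arranged slice-points into $\mathcal V(I)$ — a fact which is itself roughly the content of \cite{Nul2}'s Theorem 4.1 and Corollary 4.2. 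In short: you have correctly identified what needs to be proved, but the argument for it is missing, and the one mechanism you gesture at (Representation Formula plus a vanishing hypothesis at a single non-slice point) does not by itself deliver the conclusion.
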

\noindent Therefore it is not difficult to show that, if $I_1$ and $I_2$ are two ideals in $\mathbb H[q_1,\ldots,q_n]$,
\begin{equation}\label{Jcap1}
	\mathcal{J}(\mathcal{V}(I_1)\cup \mathcal{V}(I_2))=\mathcal{J}(\mathcal{V}(I_1))\cap \mathcal{J}(\mathcal{V}(I_2)),
\end{equation}
and
\begin{equation}\label{Jcap}
	\mathcal{J}(\mathcal{V}_c(I_1)\cup \mathcal{V}_c(I_2))=\mathcal{J}(\mathcal{V}_c(I_1))\cap \mathcal{J}(\mathcal{V}_c(I_2)).
\end{equation}
\noindent To state the Strong Nullstellensatz for slice regular polynomials we need the following generalized notion of {\em prime} ideal in the non-commutative setting (see \cite{reyes}).
\begin{definition}\label{cprime} A right ideal $I$  in $\mathbb{H}[q_1,\ldots,q_n]$  is said to be  {\em completely prime} if and only if given two polynomials $P,Q$ such that 
	$P*Q\in I$ 
	and  $P*I\subseteq I$, 
	then  either $P\in I$ or $Q\in I$.
\end{definition}
﻿
﻿
﻿
\begin{theorem}[{Strong Nullstellensatz}]\label{snull}
	Let $I$ be a right ideal of $\mathbb{H}[q_1,\ldots,q_n]$. Then
	\[\mathcal{J}(\mathcal{V}(I))=\sqrt I:= \bigcap_{ I \subset J\\ \; {J\mathrm{ completely\ prime}}}J\]
\end{theorem}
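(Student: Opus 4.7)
The plan is to prove the two containments separately, leveraging Proposition \ref{vanishing} to restrict attention to $\mathcal V_c(I)$, where Proposition \ref{prodstar} makes the $*$-product computable point-by-point.

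For $\sqrt I \subseteq \mathcal{J}(\mathcal{V}(I))$, I would introduce, for each $\mathbf{a}\in\mathcal V_c(I)$, the point ideal
\[
\mathfrak m_{\mathbf a}:=\{Q\in\mathbb H[q_1,\dots,q_n]\,:\,Q(\mathbf a)=0\},
\]
and show it is a completely prime right ideal containing $I$. Closure under addition is immediate, and closure under right $*$-multiplication is the $P(\mathbf a)=0$ case of Proposition \ref{prodstar}. For complete primeness, suppose $P*Q\in\mathfrak m_{\mathbf a}$, $P*\mathfrak m_{\mathbf a}\subseteq\mathfrak m_{\mathbf a}$, and $P\notin\mathfrak m_{\mathbf a}$. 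The second hypothesis forces the conjugated point $\mathbf a':=(P(\mathbf a)^{-1}a_1 P(\mathbf a),\dots,P(\mathbf a)^{-1}a_n P(\mathbf a))$ to satisfy $R(\mathbf a)=0\Rightarrow R(\mathbf a')=0$ for every $R\in\mathbb H[q_1,\dots,q_n]$; applying this to the coordinate separators $q_i-a_i$ forces $\mathbf a'=\mathbf a$, i.e.\ $P(\mathbf a)$ commutes with each $a_i$. Under that commutativity Proposition \ref{prodstar} collapses to $(P*Q)(\mathbf a)=P(\mathbf a)Q(\mathbf a)$, forcing $Q(\mathbf a)=0$. Since $\mathbf a\in\mathcal V(I)$ immediately gives $I\subseteq\mathfrak m_{\mathbf a}$, every $P\in\sqrt I$ lies in every such $\mathfrak m_{\mathbf a}$; hence $P$ vanishes on $\mathcal V_c(I)$, and Proposition \ref{vanishing} yields $P\in\mathcal J(\mathcal V(I))$.

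For the reverse inclusion $\mathcal J(\mathcal V(I))\subseteq\sqrt I$, fix a completely prime right ideal $J\supseteq I$ and a polynomial $P$ vanishing on $\mathcal V(I)$. Since $\mathcal V(J)\subseteq\mathcal V(I)$, $P$ vanishes on $\mathcal V(J)$, so it is enough to prove $\mathcal J(\mathcal V(J))\subseteq J$ for every completely prime $J$. A clean route is the structural identification
\[
J=\bigcap_{\mathbf a\in\mathcal V_c(J)}\mathfrak m_{\mathbf a},
\]
which, combined with Proposition \ref{vanishing}, gives $\mathcal J(\mathcal V(J))=J$ at once.

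The main obstacle will be precisely this structural characterization of completely prime right ideals, namely that any $P\notin J$ is witnessed by some point $\mathbf a\in\mathcal V_c(J)$. I would attempt a Rabinowitsch-style argument, adjoining a new variable $t$ and considering an auxiliary ideal such as $I+\langle P^s * t - 1\rangle$ in $\mathbb H[q_1,\dots,q_n,t]$: the symmetrization $P^s=P*P^c$ is ``central on each slice'' by Lemma \ref{splitting} and its vanishing corresponds exactly to a spherical-orbit zero of $P$, which should make the usual localization trick go through. A complementary, more geometric route is to reduce to each slice $\mathbb C_K^n$ via Lemma \ref{splitting}, apply the classical Hilbert Nullstellensatz to the complex splitting components $F$ and $G$, and then reassemble the slice-wise information. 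The delicate technical point will be ensuring that the complex ideals obtained in this slice-wise reduction actually correspond to restrictions of right $*$-ideals of $\mathbb H[q_1,\dots,q_n]$, since the noncommutativity of the $*$-product prevents a naive transfer of ring-theoretic statements from $\mathbb C_K[z_1,\dots,z_n]$ back to $\mathbb H[q_1,\dots,q_n]$.
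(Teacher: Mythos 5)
The paper does not prove Theorem \ref{snull}: it is cited from \cite[Theorem 4.3]{Nul2} and used throughout as a black box, so there is no in-text argument to compare against. Judged on its own, your attempt is correct and complete in one direction but leaves the harder direction unproven.

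The containment $\sqrt I \subseteq \mathcal J(\mathcal V(I))$ is carried through cleanly: for $\mathbf a \in \mathcal V_c(I)$ the point ideal $\mathfrak m_{\mathbf a}$ is a right ideal by Proposition \ref{prodstar}, contains $I$, and your complete-primeness check --- testing $P*\mathfrak m_{\mathbf a}\subseteq\mathfrak m_{\mathbf a}$ against the separators $q_i-a_i$ to force $P(\mathbf a)$ to commute with each $a_i$, after which Proposition \ref{prodstar} collapses to $(P*Q)(\mathbf a)=P(\mathbf a)Q(\mathbf a)$ --- is sound. Equation \eqref{jvci} and Proposition \ref{vanishing} then give $\sqrt I\subseteq\bigcap_{\mathbf a\in\mathcal V_c(I)}\mathfrak m_{\mathbf a}=\mathcal J(\mathcal V(I))$.

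For the reverse containment you reduce, correctly, to showing $\mathcal J(\mathcal V(J))\subseteq J$ for every completely prime right ideal $J$, i.e.\ that such a $J$ equals the intersection of the point ideals of its commuting-component zeros. But this \emph{is} the substance of the theorem --- it is exactly the radical case, after which the general case is formal --- and you leave it as an acknowledged obstacle with two unworked sketches. The Rabinowitsch idea with $P^s*t-1$ is reasonable in spirit (indeed $P^s$ has real coefficients, hence is central, which is what makes the trick even conceivable here), but you do not show that impropriety of $I+\langle P^s*t-1\rangle$ in $\mathbb H[q_1,\ldots,q_n,t]$ entails $P\in J$, nor address how completely prime right ideals of the extension ring restrict to $\mathbb H[q_1,\ldots,q_n]$. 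The slice-wise route you yourself flag as problematic, for exactly the reason you give: the complex ideals on a slice need not come from right $*$-ideals. So the key lemma is named but not proven, which is a genuine gap; the inclusion $\mathcal J(\mathcal V(I))\subseteq\sqrt I$ does not follow from what you have written.
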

﻿
﻿
﻿
﻿
﻿
﻿
﻿
\begin{definition} An ideal $I$  in $\mathbb{H}[q_1,\ldots,q_n]$  is said to be  {\em radical} if and only if $\sqrt I=I$.
\end{definition}
﻿
\noindent Notice that $ I$ is contained in $\sqrt{I}$.
﻿
﻿
\begin{remark} By definition, any completely prime ideal $I$  in $\mathbb{H}[q_1,\ldots,q_n]$ is a radical ideal. The converse is not true: the ideal $I$ in $\mathbb{H}[q]$ generated by the polynomial $q^2+1$ is radical. Indeed $$\sqrt{I}=\mathcal{J}(\mathcal{V}(I))=\mathcal{J}(\mathbb{S}_i)=I,$$
	but it is not completely prime, thanks to \cite[Lemma 4.7]{israeliani-1}.
 \end{remark}
﻿
%
﻿
\begin{proposition}\label{vrad} We have
	\[{\mathcal{V}_c(I)=\mathcal{V}_c(\sqrt{I}) }\] and
	\[{\mathcal{V}(I)=\mathcal{V}(\sqrt{I}) }\]
\end{proposition}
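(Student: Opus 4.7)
The plan is to prove both equalities by showing the two inclusions for $\mathcal V(I)=\mathcal V(\sqrt{I})$ and then deducing the corresponding statement for $\mathcal V_c$ as an immediate consequence. The argument will rely only on the elementary monotonicity of the operator $\mathcal V$, the Strong Nullstellensatz (Theorem \ref{snull}), and Proposition \ref{vanishing}(2).

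First I would observe that $\mathcal V$ is inclusion-reversing: if $J_1\subseteq J_2$ are right ideals, then by definition $\mathcal V(J_2)=\bigcap_{P\in J_2}Z_P\subseteq\bigcap_{P\in J_1}Z_P=\mathcal V(J_1)$. Since the remark preceding the statement records that $I\subseteq\sqrt I$, this immediately gives $\mathcal V(\sqrt I)\subseteq\mathcal V(I)$. Intersecting with $\bigcup_{K\in\mathbb S}\mathbb C_K^n$ yields $\mathcal V_c(\sqrt I)\subseteq\mathcal V_c(I)$.

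For the reverse inclusion $\mathcal V(I)\subseteq\mathcal V(\sqrt I)$, I would pick $\textbf{\em a}\in\mathcal V(I)$ and any $P\in\sqrt I$, and aim to show $P(\textbf{\em a})=0$. By the Strong Nullstellensatz we have $\sqrt I=\mathcal J(\mathcal V(I))$, and by Proposition \ref{vanishing}(2) the right ideal $\mathcal J(\mathcal V(I))$ coincides with the set of all slice regular polynomials that vanish identically on $\mathcal V(I)$. Hence $P$ vanishes on $\mathcal V(I)$ and in particular at $\textbf{\em a}$, so $\textbf{\em a}\in\mathcal V(\sqrt I)$. Combining both inclusions yields $\mathcal V(I)=\mathcal V(\sqrt I)$, and intersecting with $\bigcup_{K\in\mathbb S}\mathbb C_K^n$ on both sides produces $\mathcal V_c(I)=\mathcal V_c(\sqrt I)$.

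There is essentially no serious obstacle here, since the statement is a direct corollary of the Strong Nullstellensatz once one knows, via Proposition \ref{vanishing}(2), that $\mathcal J(\mathcal V(I))$ is genuinely the set of vanishing polynomials and not merely the right ideal they generate. The only minor point to keep straight is that monotonicity of $\mathcal V$ passes to $\mathcal V_c$ automatically because the defining intersection with $\bigcup_{K\in\mathbb S}\mathbb C_K^n$ does not depend on the ideal.
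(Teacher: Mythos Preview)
Your proof is correct and follows essentially the same approach as the paper: one inclusion from $I\subseteq\sqrt I$ and monotonicity of $\mathcal V$, the other from $\sqrt I=\mathcal J(\mathcal V(I))$ together with Proposition~\ref{vanishing}(2). The only cosmetic difference is that the paper handles $\mathcal V_c$ in parallel with $\mathcal V$ (invoking that $\mathcal J(\mathcal V_c(I))$ is also the set of polynomials vanishing on $\mathcal V_c(I)$), whereas you derive the $\mathcal V_c$ equality by intersecting the $\mathcal V$ equality with $\bigcup_{K\in\mathbb S}\mathbb C_K^n$; both are fine.
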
	
\begin{proof}
	Since $\sqrt{I}\supseteq I$, then $\mathcal V_c(\sqrt{I})\subseteq \mathcal V_c({I})$ and $\mathcal V(\sqrt{I})\subseteq \mathcal V({I})$.
	The other inclusion follows from the fact that $\mathcal J(\mathcal V(I))$ coincides with the set of polynomials vanishing on $\mathcal V(I)$, and the same holds for $\mathcal V_c(I)$ (see Proposition \ref{vanishing}).
\end{proof}
﻿
\noindent Besides its algebraic meaning, the Strong Nullstellesantz is relevant also from a geometric viewpoint. In fact it allows to introduce a notion of algebraic set in this quaternionic setting which gives rise to a Zariski type topology, see \cite{Nul2}.
﻿
	\begin{definition} \label{slicealg}
		A subset $V\subseteq \mathbb H^n$ is called {\em slice algebraic} if for any $K\in \mathbb S$, $V\cap \mathbb C_K^n$ is a complex algebraic subset of $\mathbb C_K^n$.
	\end{definition}
﻿
\noindent In particular, one can prove (see \cite[Theorem 4.6]{Nul2}) that if $I$ is a right ideal of $\mathbb H [q_1,\ldots,q_n]$, then
$\mathcal V(I)$ is a slice algebraic set in $ \mathbb H^n$.	
Moreover, since for any $K\in \mathbb S$
\[\mathcal V_c(I) \cap \mathbb C_K^n=\mathcal V(I) \cap \mathbb C_K^n,\]
Definition \ref{slicealg} implies that {any set of the form} $\mathcal V_c(I)$ is slice algebraic as well.
﻿
﻿
\section{Symmetrization of algebraic sets and of ideals}
﻿
\noindent To begin with, we give the following 
﻿
{ \begin{definition} 
	Starting from a set $Z_c\subseteq \mathbb H^n$ containing only points with commuting components, we denote its {\em symmetrization} by $$\mathbb{S}_{Z_c}:=\bigcup_{\textbf{a} \in Z_c}\mathbb S_{\textbf a}.$$\end{definition}
}
﻿
\noindent {Note that, if all the points of $Z_c$ have commuting components, then the same holds true for all the points of $\mathbb{S}_{Z_c}$.﻿}
\noindent Let us recall the following propositions which enlighten the analogies between spheres in $\mathbb H$ and spherical sets in $\mathbb H^n$ as vanishing sets for slice regular polynomials in one and several quaternionic variables.
﻿
\begin{proposition}[{{\cite[Proposition 3.12]{Nul2}}}] \label{zerisferici}
	If a slice regular polynomial with real coefficients $R\in
	\mathbb R[q_1,\ldots,q_n]$ vanishes at a point
	$\textbf{a}\in \mathbb H^n$, then it vanishes on the
	entire spherical set $\mathbb S_{\textbf{a}}$.
\end{proposition}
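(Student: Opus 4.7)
The plan is to show that for any nonzero $g \in \mathbb{H}$, conjugation by $g$ commutes with the evaluation of a polynomial with real coefficients, so that $R(g^{-1}a_1 g, \ldots, g^{-1}a_n g) = g^{-1} R(a_1, \ldots, a_n) g$; since $R(\textbf{a}) = 0$, the right-hand side vanishes and the proposition follows.

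Concretely, first I would write
\[R(q_1,\ldots, q_n) = \sum_{\ell_1,\ldots,\ell_n} q_1^{\ell_1}\cdots q_n^{\ell_n}\, r_{\ell_1,\ldots,\ell_n},\]
with each $r_{\ell_1,\ldots,\ell_n} \in \mathbb{R}$. The elementary fact I would invoke is that for any quaternions $a, b$ and any nonzero $g \in \mathbb{H}$, one has $(g^{-1} a g)(g^{-1} b g) = g^{-1}(ab)g$, so by induction $(g^{-1} a_j g)^{\ell_j} = g^{-1} a_j^{\ell_j} g$ and more generally
\[(g^{-1}a_1 g)^{\ell_1}\cdots (g^{-1}a_n g)^{\ell_n} = g^{-1}\, a_1^{\ell_1}\cdots a_n^{\ell_n}\, g.\]
Then, using that real numbers are central in $\mathbb{H}$ and thus commute with $g^{-1}$, I can move each coefficient $r_{\ell_1,\ldots,\ell_n}$ inside the conjugation and sum over multi-indices to obtain
\[R(g^{-1}a_1 g,\ldots,g^{-1}a_n g) = g^{-1}\, R(a_1,\ldots,a_n)\, g.\]

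Since every element of $\mathbb{S}_{\textbf{a}}$ has the form $(g^{-1}a_1 g,\ldots,g^{-1}a_n g)$ for some $g \in \mathbb{H}\setminus\{0\}$, the hypothesis $R(\textbf{a}) = 0$ immediately yields $R \equiv 0$ on $\mathbb{S}_{\textbf{a}}$.

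I do not expect any real obstacle here: the only delicate point is making sure the centrality of real coefficients is used precisely when pulling them out of the product before combining with the conjugation identity. Everything else is a one-line manipulation, which is why the statement is presented as a preliminary lemma drawn directly from \cite{Nul2}.
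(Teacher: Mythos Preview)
Your argument is correct: conjugation by a nonzero quaternion is a ring automorphism of $\mathbb{H}$ fixing $\mathbb{R}$ pointwise, so it commutes with the evaluation of any polynomial with real coefficients, and the vanishing at $\textbf{a}$ propagates to all of $\mathbb{S}_{\textbf{a}}$ exactly as you wrote. The paper does not supply its own proof of this proposition---it is quoted verbatim from \cite{Nul2} as a preliminary fact---so there is no alternative argument to compare against; your direct conjugation computation is the expected one.
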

\begin{proposition}[{\cite[Proposition 3.18]{Nul2}}]\label{zericinesi}
	Let $P\in \mathbb{H}[q_1,\ldots,q_n]$ and let ${\textbf{a}}\in \mathbb{C}_J^n$ for some $J\in \mathbb{S}$.
	If $P$ vanishes on two different points of the arranged spherical set $\mathbb S_{\textbf{a}}$, then $P$ vanishes on the entire $\mathbb S_{\textbf{a}}$.
\end{proposition}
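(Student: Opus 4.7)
The plan is to reduce the several-variable sphere-vanishing statement to the corresponding one-variable fact, namely that a slice regular polynomial in $\mathbb H[q]$ vanishing at two distinct points of a sphere of conjugate imaginary units vanishes on the whole sphere (see for instance \cite{libroGSS}). The idea is to restrict $P$ along the natural parameterization of the arranged spherical set.

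First I would parameterize $\mathbb S_{\textbf{a}}$. Writing $\textbf{a}=(a_1,\ldots,a_n)\in \mathbb C_J^n$ componentwise as $a_i=x_i+y_iJ$ with $x_i,y_i\in\mathbb R$, I note that for any $g\in\mathbb H\setminus\{0\}$ one has $g^{-1}a_ig=x_i+y_iK$ where $K:=g^{-1}Jg\in\mathbb S$, and $K$ ranges over all of $\mathbb S$ as $g$ varies. Hence every point of $\mathbb S_{\textbf{a}}$ has the form $\textbf{a}_K:=(x_1+y_1K,\ldots,x_n+y_nK)$ for some $K\in\mathbb S$, and two distinct points of $\mathbb S_{\textbf{a}}$ correspond to two distinct elements $K_1,K_2$ of the \emph{single} sphere of imaginary units $\mathbb S$.

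Next I would introduce the auxiliary one-variable map $Q(q):=P(x_1+y_1q,\ldots,x_n+y_nq)$. Because the $x_i,y_i$ are real, for each multi-index $\vec{\ell}$ the product $\prod_{i=1}^n (x_i+y_iq)^{\ell_i}$ is computed in the commutative ring $\mathbb R[q]$ and expands as a polynomial in $q$ with real coefficients; multiplying on the right by the quaternionic coefficient $a_{\vec{\ell}}$ of $P$ and summing over $\vec{\ell}$ shows that $Q(q)=\sum_{k}q^{k}b_{k}\in\mathbb H[q]$ is a slice regular polynomial in one quaternionic variable, with the key evaluation property $Q(K)=P(\textbf{a}_K)$ for every $K\in\mathbb H$. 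The hypothesis then translates to $Q(K_1)=Q(K_2)=0$ for distinct $K_1,K_2\in\mathbb S$, and the one-variable sphere-vanishing lemma forces $Q\equiv 0$ on $\mathbb S$; equivalently $P(\textbf{a}_K)=0$ for every $K\in\mathbb S$, so $P$ vanishes on all of $\mathbb S_{\textbf{a}}$.

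The only delicate point is verifying that $Q$ genuinely lies in $\mathbb H[q]$, and this is precisely where the hypothesis $\textbf{a}\in \mathbb C_J^n$ is used: having all components in the \emph{same} slice makes the substitution coefficients $x_i,y_i$ real, so no ordering ambiguity arises when expanding the products $\prod_i(x_i+y_iq)^{\ell_i}$ and every factor can be moved freely past the others before being multiplied by the (non-commutative) coefficient $a_{\vec{\ell}}$ on the right. Once this step is secured, the argument is a one-line reduction to the classical one-variable result.
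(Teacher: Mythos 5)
Your proof is correct, and it is a genuine proof of the statement (the paper itself does not reprove Proposition~\ref{zericinesi}; it simply cites it from \cite{Nul2}, so there is no in-text proof to compare against). Your key move is a clean reduction to the well-known one-variable fact: since $\textbf{\em a}=(x_1+y_1J,\dots,x_n+y_nJ)$ with $x_i,y_i\in\mathbb R$, each factor $x_i+y_iq$ lies in the commutative subring $\mathbb R[q]$, so $\prod_i(x_i+y_iq)^{\ell_i}\in\mathbb R[q]$ and $Q(q):=P(x_1+y_1q,\dots,x_n+y_nq)$ is a bona fide element of $\mathbb H[q]$; moreover, because the real coefficients arising in that expansion commute with any $K\in\mathbb H$, one has $Q(K)=P(\textbf{\em a}_K)$ with $\textbf{\em a}_K=(x_1+y_1K,\dots,x_n+y_nK)$. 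This identifies the restriction of $P$ to $\mathbb S_{\textbf{\em a}}$ with the restriction of a one-variable slice regular polynomial to $\mathbb S$, at which point the classical structure theorem for the zero set on a 2-sphere finishes the job. The approach is elementary and self-contained, arguably simpler than arguing via the several-variable Splitting Lemma (Lemma~\ref{splitting}) and the Representation Formula on the ambient sphere.

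Two small points are worth making explicit before you regard the write-up as complete. First, you should dispose of the degenerate case $\textbf{\em a}\in\mathbb R^n$: there $\mathbb S_{\textbf{\em a}}$ is a single point and the hypothesis ``two different points'' is vacuous, so no argument is needed. Second, having assumed some $y_i\neq 0$, you should say in one sentence that the map $K\mapsto\textbf{\em a}_K$ is injective on $\mathbb S$ (because $y_i(K_1-K_2)=0$ with $y_i\neq 0$ forces $K_1=K_2$), so that two distinct zeros of $P$ on $\mathbb S_{\textbf{\em a}}$ indeed produce two \emph{distinct} imaginary units $K_1\neq K_2$ at which $Q$ vanishes; this is exactly what the one-variable lemma requires. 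With those two sentences added, the proof is airtight.
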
	
﻿
\noindent Thanks to the previous propositions, we can prove the following result, which generalizes to the several variable case Proposition 3.10 in \cite{libroGSS}. 
﻿
﻿
\begin{proposition}\label{corrsimm}
	Let $\textbf{a}\in \mathbb C^n_J$. Then the zeros of $P$ on $\mathbb{S}_{\textbf{a}}$ are in one-to-one correspondence with those of $P^c$. \\
	Moreover $P$  has a zero on $\mathbb{S}_{\textbf{a}}$ if and only if $P^s$ has a zero on $\mathbb{S}_{\textbf{a}}$ (and hence vanishes identically on it).
	
\end{proposition}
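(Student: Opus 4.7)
The strategy rests on three ingredients already in the paper: (i) the symmetrization $P^s$ has real coefficients, (ii) Proposition \ref{prodstar} gives a pointwise formula for the slice product whenever the evaluation point has commuting components, and (iii) by Propositions \ref{zerisferici} and \ref{zericinesi}, on an arranged spherical set $\mathbb{S}_{\textbf{a}}$ any slice regular polynomial has either no zero, exactly one zero, or all of $\mathbb{S}_{\textbf{a}}$ as zero locus, and the last case is forced as soon as the polynomial has real coefficients and vanishes at a single point. Crucially, conjugation by $g$ preserves commutation, so every point of $\mathbb{S}_{\textbf{a}}$ has commuting components and Proposition \ref{prodstar} applies throughout $\mathbb{S}_{\textbf{a}}$.

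First, I would verify that $P^s$ has real coefficients: from the definition of the $*$-product, the coefficient of $q_1^{N_1}\cdots q_n^{N_n}$ in $P*P^c$ has the form $\sum_{\ell} a_{\ell}\,\overline{a_{N-\ell}}$, and the involution $\ell\mapsto N-\ell$ on the summation domain shows that this expression equals its own conjugate. Consequently, by Proposition \ref{zerisferici}, $P^s$ is either nowhere zero on $\mathbb{S}_{\textbf{a}}$ or vanishes identically on it.

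For the first assertion, by Proposition \ref{zericinesi} each of $Z_P:=\{P=0\}\cap\mathbb{S}_{\textbf{a}}$ and $Z_{P^c}$ is either empty, a singleton, or the whole of $\mathbb{S}_{\textbf{a}}$, so it suffices to show that the three cases occur on the same side of the equivalence. If $\textbf{b}\in Z_P$, Proposition \ref{prodstar} applied to $P^s=P*P^c$ gives $P^s(\textbf{b})=0$ and hence $P^s\equiv 0$ on $\mathbb{S}_{\textbf{a}}$; then for any $\textbf{c}\in\mathbb{S}_{\textbf{a}}$ with $P(\textbf{c})\neq 0$ the same formula forces $P^c$ to vanish at $P(\textbf{c})^{-1}\textbf{c}\,P(\textbf{c})\in\mathbb{S}_{\textbf{a}}$, so $Z_{P^c}\neq\emptyset$; swapping $P$ and $P^c$ (using $(P^c)^c=P$) gives the converse, showing $Z_P\neq\emptyset\iff Z_{P^c}\neq\emptyset$. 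In the delicate subcase $Z_P=\mathbb{S}_{\textbf{a}}$, I would invoke the Splitting Lemma \ref{splitting} at $K=J$: since $\textbf{a}$ and $\bar{\textbf{a}}$ both lie in $\mathbb{S}_{\textbf{a}}\cap\mathbb{C}_J^n$ and are zeros of $P$, both splitting components $F$ and $G$ vanish at $\textbf{a}$ and at $\bar{\textbf{a}}$; formula \eqref{splitconj} then yields $P^c(\textbf{a})=P^c(\bar{\textbf{a}})=0$, and a second application of Proposition \ref{zericinesi} gives $Z_{P^c}=\mathbb{S}_{\textbf{a}}$. By elimination, $|Z_P|=1\iff|Z_{P^c}|=1$, and in each case a bijection between $Z_P$ and $Z_{P^c}$ is immediate.

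The second assertion then reduces to the first. The forward direction is immediate: if $P(\textbf{b})=0$, Proposition \ref{prodstar} applied to $P^s=P*P^c$ gives $P^s(\textbf{b})=0$, and the reality of the coefficients of $P^s$ forces $P^s\equiv 0$ on $\mathbb{S}_{\textbf{a}}$. Conversely, if $P^s(\textbf{b})=0$, applying Proposition \ref{prodstar} to $P^s=P^c*P$ yields either $P^c(\textbf{b})=0$, in which case the bijection from the first assertion produces a zero of $P$ on $\mathbb{S}_{\textbf{a}}$, or $P$ vanishes directly at $P^c(\textbf{b})^{-1}\textbf{b}\,P^c(\textbf{b})\in\mathbb{S}_{\textbf{a}}$. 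The main anticipated obstacle is precisely the subcase in which $P$ vanishes on all of $\mathbb{S}_{\textbf{a}}$: there the pointwise $*$-product formula alone is insufficient, and one must appeal to the Splitting Lemma together with the observation that $\bar{\textbf{a}}\in\mathbb{S}_{\textbf{a}}$ whenever $\textbf{a}\in\mathbb{C}_J^n$ to transfer the full-vanishing property from $P$ to $P^c$.
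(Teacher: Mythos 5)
Your proof is correct and follows essentially the same route as the paper's: you use Proposition \ref{prodstar} to transfer a zero from $P$ to $P^c$ via the factorization $P^s = P * P^c$ (resp.\ $P^c * P$), and you handle the degenerate case $Z_P = \mathbb{S}_{\textbf{a}}$ exactly as the paper does, by invoking the Splitting Lemma \ref{splitting} and formula \eqref{splitconj} to carry the full vanishing over to $P^c$. The only differences are cosmetic: you organize the argument around a case analysis on $|Z_P| \in \{0, 1, |\mathbb{S}_{\textbf{a}}|\}$ and explicitly verify that $P^s$ has real coefficients (a step the paper leaves implicit), whereas the paper tests directly at the point $\overline{\textbf{b}}$; both then close the loop via the involution $(P^c)^c = P$.
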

\begin{proof}
	Let $\textbf{\em b}\in \mathbb S_{\textbf{\em a}}$ be such that $P(\textbf{\em b})=0$. Then $P^s=P*P^c$ vanishes on $\mathbb S_{\textbf{\em a}}$, and in particular $P^s(\overline{ \textbf{\em b}})=0$. 
	Suppose first that $P(\overline{\textbf{\em b}})\neq 0$, { so that the only zero of $P$ on $\mathbb S_{\textbf{\em a}}$ is $\textbf{\em b}$.} Then
	\[0=P^s(\overline{ \textbf{\em b}})=P(\overline{ \textbf{\em b}})\cdot P^c(P(\overline{ \textbf{\em b}})^{-1}\overline{\textbf{\em b}}P(\overline{ \textbf{\em b}})),\]
	and thus $P^c$ vanishes on $P(\overline{ \textbf{\em b}})^{-1}\overline{\textbf{\em b}}P(\overline{ \textbf{\em b}}) \in \mathbb S_{\textbf{\em a}}$.
	If instead $P(\overline{\textbf{\em b}})= 0$, then, $P$ vanishes on the entire $\mathbb S_{\textbf{\em a}}$. 
	Let $K,L$ be two orthogonal imaginary units and consider the splitting $P=F+GL$. Since $P$ vanishes on $\mathbb S_{\textbf{\em a}}\cap \mathbb C_{K}^n=\{\textbf{\em c},\overline{\textbf {\em c}}\}$, we get that $F(\textbf{\em c})=F(\overline{\textbf{\em c}})=0$ and $G(\textbf{\em c})=G(\overline{\textbf{\em c}})=0$. 
	Using the splitting formula for $P^c$, we get that $P^c$ vanishes both at ${\textbf{\em c}}$ and at $\overline{\textbf{\em c}}$, and hence on the entire $\mathbb S_{\textbf{\em a}}$.  { Since the regular conjugation is an involution in $\mathbb H[q_1,\ldots,q_n]$ (see Definition \ref{R-coniugata2}), we get the claim. }
\end{proof}
﻿
﻿
\noindent Let us now investigate geometric and algebraic properties of the symmetrization of a slice algebraic set of the form $\mathcal V_c(I)$. \\
\noindent We need the following definition.
\begin{definition} 
	Given a right ideal $I\subseteq  \mathbb H[q_1,\ldots,q_n]$, we can define the {\em symmetrized ideal} $\mathcal{S}(I)$ as the ideal generated by $P^s$ with $ P\in I$, i.e. 
	$\mathcal{S}(I)=\langle P^s\;|\;P\in I\rangle$.
\end{definition}
\noindent Notice that $ \mathcal{S}(I)$ is contained in $I$. Moreover, since $\mathbb H[q_1,\ldots,q_n]$ is a Noetherian ring (see, e.g., \cite{reyes}), the symmetrized ideal $\mathcal S(I)$ can be generated by a finite number of symmetrized polynomials. 
﻿
\begin{theorem}\label{simmalg}
	Let $I$ be a right ideal of $\mathbb H[q_1,\ldots,q_n]$.  Then the symmetrization of $\mathcal V_c(I)$ is a slice algebraic set, more precisely
	\begin{equation}\label{inclusion}
		\mathbb{S}_{\mathcal{V}_c(I)}=	\mathcal{V}_c( \mathcal{S}(I)).
	\end{equation}
	
\end{theorem}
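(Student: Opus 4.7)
The plan is to establish the two inclusions in \eqref{inclusion} separately.

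For the inclusion $\mathbb{S}_{\mathcal{V}_c(I)} \subseteq \mathcal{V}_c(\mathcal{S}(I))$, the main ingredient is that for every $P \in \mathbb{H}[q_1,\ldots,q_n]$ the symmetrization $P^s$ has real coefficients: a direct inspection of the $\ast$-product formula shows that pairs of multi-indices $(\vec\imath,\vec\jmath)$ and $(\vec\jmath,\vec\imath)$ contribute mutually conjugate terms in the expansion of $P\ast P^c$. Given $\textbf{b}\in\mathbb{S}_{\textbf{a}}$ with $\textbf{a}\in\mathcal{V}_c(I)$ and $P\in I$, one has $P^s(\textbf{a})=0$, and Proposition~\ref{zerisferici} propagates this to all of $\mathbb{S}_{\textbf{a}}\ni\textbf{b}$. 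Since every element of $\mathcal{S}(I)$ is a finite sum of $\ast$-products $P^s\ast Q$ with $P\in I$, Proposition~\ref{prodstar} at $\textbf{b}$ (which has commuting components as a point of some $\mathbb{C}_K^n$) yields the vanishing of each such term.

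For the reverse inclusion, fix $\textbf{b}\in\mathcal{V}_c(\mathcal{S}(I))\cap\mathbb{C}_K^n$ and $L\in\mathbb{S}$ orthogonal to $K$. The computational core of the argument is the identity
\[
P^s(\textbf{b}) \;=\; F_P(\textbf{b})\,\overline{F_P(\overline{\textbf{b}})} \;+\; G_P(\textbf{b})\,\overline{G_P(\overline{\textbf{b}})},
\]
where $P=F_P+G_P L$ is the splitting from Lemma~\ref{splitting}; this is proved by expanding $P\ast P^c$ and using that $\{1,K,L,KL\}$ is an orthonormal basis of $\mathbb{H}$. Under the identification $A+BL\leftrightarrow(A,B)$ between $\mathbb{H}$ and $\mathbb{C}_K^2$, the identity exhibits $P^s(\textbf{b})=0$ as the Hermitian orthogonality in $\mathbb{C}_K^2$ of the vectors corresponding to $P(\textbf{b})$ and $P(\overline{\textbf{b}})$.

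I would then introduce the joint evaluation set
\[
\Phi \;:=\; \{(P(\textbf{b}),\,P(\overline{\textbf{b}}))\colon P\in I\} \;\subseteq\; \mathbb{H}^2.
\]
Since $I$ is closed under addition and under right $\ast$-multiplication by constants of $\mathbb{H}$, $\Phi$ is a right $\mathbb{H}$-submodule of $\mathbb{H}^2$, so its $\mathbb{H}$-dimension is $0$, $1$, or $2$. Dimension $2$ is ruled out at once: one could then find $P_1,P_2\in I$ mapping to the standard basis, and the identity above would give $(P_1+P_2)^s(\textbf{b})=1\neq 0$, contradicting $\textbf{b}\in\mathcal{V}_c(\mathcal{S}(I))$. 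Dimension $0$ means that every $P\in I$ vanishes at both $\textbf{b}$ and $\overline{\textbf{b}}$, so by Proposition~\ref{zericinesi} $P$ vanishes on the entire $\mathbb{S}_{\textbf{b}}$, hence $\mathbb{S}_{\textbf{b}}\subseteq\mathcal{V}_c(I)$. In the dimension-$1$ case, write $\Phi=(p_1,p_2)\cdot\mathbb{H}$; if $p_1=0$ or $p_2=0$ then $\textbf{b}$ or $\overline{\textbf{b}}$ lies directly in $\mathcal{V}_c(I)$, so assume $p_1,p_2\neq 0$. The Hermitian orthogonality of $(p_1,p_2)$ forces $p_1\neq p_2$ (else the inner product would equal $|p_1|^2\neq 0$), so in particular $\textbf{b}\neq\overline{\textbf{b}}$. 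Pick $P_0\in I$ with $(P_0(\textbf{b}),P_0(\overline{\textbf{b}}))=(p_1,p_2)$; since $P_0^s(\textbf{b})=0$, Proposition~\ref{corrsimm} produces a zero $\textbf{c}\in\mathbb{S}_{\textbf{b}}$ of $P_0$. For any $P\in I$ there exists $c\in\mathbb{H}$ with $(P(\textbf{b}),P(\overline{\textbf{b}}))=((P_0\ast c)(\textbf{b}),(P_0\ast c)(\overline{\textbf{b}}))$, so $P-P_0\ast c$ vanishes at the two distinct points $\textbf{b}$ and $\overline{\textbf{b}}$ of the arranged spherical set $\mathbb{S}_{\textbf{b}}$; Proposition~\ref{zericinesi} then forces it to vanish on all of $\mathbb{S}_{\textbf{b}}$, in particular at $\textbf{c}$. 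Combined with $P_0(\textbf{c})=0$ this yields $P(\textbf{c})=0$, so $\textbf{c}\in\mathcal{V}_c(I)$ and $\textbf{b}\in\mathbb{S}_{\textbf{c}}\subseteq\mathbb{S}_{\mathcal{V}_c(I)}$.

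The main obstacle is the dimension-$1$ case, where one must extract a \emph{single} common zero on $\mathbb{S}_{\textbf{b}}$ for the entire ideal $I$; the key tricks are the Hermitian-orthogonality reinterpretation of $P^s(\textbf{b})=0$, which severely constrains $\Phi$, and the substitution of an arbitrary $P\in I$ by $P_0\ast c$ on $\mathbb{S}_{\textbf{b}}$ via Proposition~\ref{zericinesi}, reducing the problem to the single known zero of $P_0$.
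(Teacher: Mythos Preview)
Your argument is correct and takes a genuinely different route from the paper's. For the easy inclusion you proceed essentially as the paper does. For the reverse inclusion the paper argues by contradiction: assuming $\mathbb S_{\textbf{\em a}}\cap\mathcal V_c(I)=\varnothing$, it picks $P_1,P_2\in I$ whose (unique) zeros on $\mathbb S_{\textbf{\em a}}$ are distinct, expands $(P_1*\alpha+P_2*\beta)^s$ at one of these zeros, and lets the parameters $\alpha,\beta$ vary over $\mathbb H$ to force the unique zero $\textbf{\em b}_1$ of $P_1^c$ on $\mathbb S_{\textbf{\em a}}$ to commute with every quaternion, a contradiction. Your approach instead computes $P^s(\textbf{\em b})$ via the Splitting Lemma, reinterprets the vanishing as Hermitian orthogonality in $\mathbb C_K^2$ of the vectors attached to $P(\textbf{\em b})$ and $P(\overline{\textbf{\em b}})$, and then runs a clean case analysis on the $\mathbb H$-dimension of the evaluation module $\Phi\subseteq\mathbb H^2$; in the only nontrivial case you reduce every $P\in I$ to $P_0*c$ on $\mathbb S_{\textbf{\em b}}$ via Proposition~\ref{zericinesi} and conclude with the single zero of $P_0$ supplied by Proposition~\ref{corrsimm}. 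The paper's proof is shorter and avoids the explicit $P^s$ formula, relying only on Proposition~\ref{prodstar}; your argument is more structural, is constructive rather than by contradiction, and makes transparent why the obstruction is a linear-algebraic one (a rank drop in $\Phi$), which may be useful if one wants to quantify or refine the statement.
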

﻿
\begin{proof} The inclusion $\mathbb{S}_{\mathcal{V}_c(I)}\subseteq\mathcal{V}_c( \mathcal{S}(I))$ can be proved easily. Indeed
	every polynomial $P$ in $I$ vanishes on $\mathcal{V}_c(I)$, and thus $P^s$ vanishes on $\mathbb{S}_{\mathcal{V}_c(I)}$. \\
	For the other inclusion, assume by contradiction that there exists  $\textbf{\em a}\in \mathcal{V}_c(\mathcal{S}(I))\setminus \mathbb{S}_{\mathcal{V}_c(I)}$. 
	Since $\textbf{\em a}\notin \mathbb{S}_{\mathcal{V}_c(I)}$, we get that $\textbf{\em a}\notin \mathbb{R}^n$ and $\mathbb S_{\textbf{\em a}} \cap \mathcal{V}_c(I)=\varnothing$. Hence there exist at least two polynomials  $P_1$ and $P_2$ in $I$ without a common zero on $\mathbb{S}_{\textbf{\em a}}$. Call $\textbf{\em a}_1$ and $\textbf{\em a}_2$
	the two points on $\mathbb{S}_{\textbf{\em a}}$ on which $P_1$ and $P_2$ vanish respectively.  Note that $\textbf{\em a}_1\neq \textbf{\em a}_2.$ Observe that for all $\alpha,\beta\in \mathbb{H},$ $P_1*\alpha+P_2*\beta\in I$. Thus $F_{\alpha,\beta}=(P_1*\alpha+P_2*\beta)^s\in S(I)$ vanishes on 
	$\mathbb{S}_{\textbf{\em a}}$. We evaluate $F_{\alpha,\beta}=P_1^s|\alpha|^2+P_2^s|\beta|^2+P_1*\alpha*\overline{\beta}*P_2^c+P_2*\beta*\overline{\alpha}*P_1^c$ on $\textbf{\em a}_1$.
	{If $\alpha,\beta\in \mathbb{H}\setminus\{0\}$}, using the notation of Proposition \ref{prodstar},	
	we get $$0=F_{\alpha,\beta}(\textbf{\em a}_1)=[P_2{ { *}}\beta*\overline{\alpha}*P_1^c] (\textbf{\em a}_1)=P_2(\textbf{\em a}_1)\beta\overline{\alpha}\cdot P_1^c({(P_2(\textbf{\em a}_1)\beta\overline{\alpha})}^{-1}\textbf{\em a}_1P_2(\textbf{\em a}_1)\beta\overline{\alpha}),$$ 
	{since $P_2(\textbf{\em a}_1)\beta\overline{\alpha}\neq 0$.}	 
	Denote by $\textbf{\em b}_1$ 	 
	the unique zero on $\mathbb{S}_{\textbf{\em a}}$ of $ P_1^c$. Thus we have {$\forall \alpha,\beta\in \mathbb{H}\setminus\{0\}$}
	
	$$\textbf{\em b}_1={(P_2(\textbf{\em a}_1)\beta\overline{\alpha})}^{-1}\textbf{\em a}_1P_2(\textbf{\em a}_1)\beta\overline{\alpha}=\overline{\alpha}^{-1}\beta^{-1}P_2(\textbf{\em a}_1)^{-1}\textbf{\em a}_1P_2(\textbf{\em a}_1)\beta\overline{\alpha}.$$
	For $\alpha=\beta=1$ we get 
	$$\textbf{\em b}_1={(P_2(\textbf{\em a}_1))}^{-1}\textbf{\em a}_1P_2(\textbf{\em a}_1),$$ 
	and hence for $\alpha=1, \beta\notin \mathbb{R}$ 
	$${\beta}^{-1} \textbf{\em b}_1 \beta=\textbf{\em b}_1.$$ Thus $\textbf{\em b}_1 \beta= \beta\textbf{\em b}_1$ but this can happen only if $\textbf{\em b}_1\in \mathbb{R}^n$. A contradiction since $\textbf{\em a}\notin\mathbb{R}^n$.
	
\end{proof}
﻿
\noindent From the algebraic viewpoint we have the following description.
\begin{corollary}\label{essealg}
	Let  $I$ be an ideal  in $\mathbb H[q_1,\ldots,q_n]$. Then 
	\[\mathcal J (\mathbb S_{\mathcal V_c(I)})=\sqrt{\mathcal S(I)}.\]
Furthermore $\mathcal J (\mathbb S_{\mathcal V_c(I)})$ is a two-sided ideal.
	\end{corollary}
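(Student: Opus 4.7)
The plan is to chain together the three prior results quoted in the excerpt and then handle the two-sidedness by exploiting invariance under simultaneous conjugation.

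First, I would apply $\mathcal{J}$ to Theorem \ref{simmalg}, which gives $\mathcal{J}(\mathbb{S}_{\mathcal{V}_c(I)}) = \mathcal{J}(\mathcal{V}_c(\mathcal{S}(I)))$. Then I would use Proposition \ref{vanishing}(1), applied to the right ideal $\mathcal{S}(I)$, to rewrite this as $\mathcal{J}(\mathcal{V}(\mathcal{S}(I)))$. Finally the Strong Nullstellensatz (Theorem \ref{snull}) identifies this with $\sqrt{\mathcal{S}(I)}$. This gives the first claim as a three-line chain of equalities.

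For the two-sided statement, let $Z := \mathbb{S}_{\mathcal{V}_c(I)}$. First I would observe that $Z$ consists of points with commuting components: if $\textbf{\em a}=(a_1,\ldots,a_n)$ has $a_ia_j=a_ja_i$ and $g\in\mathbb{H}\setminus\{0\}$, then $(g^{-1}a_ig)(g^{-1}a_jg)=g^{-1}a_ia_jg=g^{-1}a_ja_ig=(g^{-1}a_jg)(g^{-1}a_ig)$. Hence by the observation just after \eqref{jvci}, $\mathcal{J}(Z)=\{P\in\mathbb{H}[q_1,\ldots,q_n]:P_{|_Z}\equiv 0\}$. Next, by construction $Z$ is invariant under the simultaneous conjugation $\textbf{\em a}\mapsto g^{-1}\textbf{\em a}g$ for every $g\in\mathbb{H}\setminus\{0\}$, since the symmetrization class of any point in $Z$ is already contained in $Z$.

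Now I would show that $\mathcal{J}(Z)$ is closed under \emph{left} multiplication by arbitrary $R\in\mathbb{H}[q_1,\ldots,q_n]$; combined with the fact that $\mathcal{J}(Z)$ is already a right ideal, this yields two-sidedness. Fix $P\in\mathcal{J}(Z)$, $R\in\mathbb{H}[q_1,\ldots,q_n]$, and $\textbf{\em a}\in Z$. If $R(\textbf{\em a})=0$, then Proposition \ref{prodstar} gives $(R*P)(\textbf{\em a})=0$ immediately. If instead $R(\textbf{\em a})\neq 0$, Proposition \ref{prodstar} yields
\[
(R*P)(\textbf{\em a})=R(\textbf{\em a})\cdot P\bigl(R(\textbf{\em a})^{-1}a_1 R(\textbf{\em a}),\ldots,R(\textbf{\em a})^{-1}a_n R(\textbf{\em a})\bigr),
\]
and the argument of $P$ lies in $\mathbb{S}_{\textbf{\em a}}\subseteq Z$ by invariance of $Z$. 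Since $P$ vanishes on $Z$, this evaluates to $0$, so $R*P\in\mathcal{J}(Z)$.

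No serious obstacle is expected: the first part is a purely formal concatenation of results already in the excerpt, and the only point that needs care is verifying that $Z$ contains only points with commuting components (needed so that the explicit evaluation formula of Proposition \ref{prodstar} applies on all of $Z$) and that $Z$ is conjugation-invariant (needed to conclude the shifted argument of $P$ still lies in $Z$). Both are immediate from the definition of symmetrization.
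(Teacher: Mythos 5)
Your proof is correct and follows essentially the same route as the paper's: the first part is the same chain (Theorem \ref{simmalg}, then Proposition \ref{vanishing}(1), then the Strong Nullstellensatz), and the second part is the same left-multiplication argument using Proposition \ref{prodstar} and the conjugation-invariance of $\mathbb S_{\mathcal V_c(I)}$. You are merely more explicit than the paper about the two facts it leaves implicit, namely that points of $\mathbb S_{\mathcal V_c(I)}$ have commuting components and that the set is closed under simultaneous conjugation.
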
 
\begin{proof}
The equality $\mathcal J (\mathbb S_{\mathcal V_c(I)})=\sqrt{\mathcal S(I)}$ follows directly from Theorem \ref{simmalg} and from the Strong Nullstellesatz \ref{snull}.\\
For the second part, let $Q$ be any polynomial in $\mathbb{H}[q_1,\ldots,q_n]$; we have to show that $Q * \mathcal J (\mathbb S_{\mathcal V_c(I)}) \subseteq \mathcal J (\mathbb S_{\mathcal V_c(I)})$. Let $P \in \mathcal J (\mathbb S_{\mathcal V_c(I)})$, and let $\textbf{\em a} \in \mathbb S_{\mathcal V_c(I)}$. Then either $Q(\textbf{\em a})=0$, and hence $Q*P(\textbf{\em a})=0$; or
\[Q*P(\textbf{\em a})=Q(\textbf{\em a})P(Q(\textbf{\em a})^{-1}\textbf{\em a}Q(\textbf{\em a}))=0\]
since $P$ vanishes on $\mathbb S_{\mathcal V_c(I)}$. 
Therefore $Q*P \in \mathcal J (\mathbb S_{\mathcal V_c(I)})$.
 \end{proof}
﻿
\noindent The second part of the statement of the previous corollary can be easily generalized as follows.
\begin{proposition}\label{Zc}
	Let $Z_c$ be a subset of $\mathbb H^n$ of points with commuting components. Then $\mathcal J(\mathbb S_{Z_c})$ is a two-sided ideal.
\end{proposition}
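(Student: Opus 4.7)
The plan is to mimic the argument used for the second part of Corollary \ref{essealg} and verify that it extends verbatim to any subset $Z_c$ of points with commuting components. The key observations I exploit are (i) the remark immediately following equation \eqref{jvci}, namely $\mathcal J(Z_c)=\{P\in \mathbb H[q_1,\ldots,q_n] : P|_{Z_c}\equiv 0\}$ for any $Z_c$ with commuting components, and (ii) that the symmetrization $\mathbb S_{Z_c}=\bigcup_{\textbf{\em a}\in Z_c}\mathbb S_{\textbf{\em a}}$ inherits commuting components (as already noted in the paper just after the definition of symmetrization) and, by construction, is closed under the simultaneous conjugation $\textbf{\em a}\mapsto g^{-1}\textbf{\em a} g$ for every $g\in \mathbb H\setminus\{0\}$.

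First I would note that $\mathcal J(\mathbb S_{Z_c})$ is a right ideal by definition, so it suffices to prove that for every $Q\in \mathbb H[q_1,\ldots,q_n]$ and every $P\in \mathcal J(\mathbb S_{Z_c})$ one has $Q*P\in \mathcal J(\mathbb S_{Z_c})$. Since $\mathbb S_{Z_c}$ consists of points with commuting components, observation (i) applied to $\mathbb S_{Z_c}$ reduces the task to showing that $Q*P$ vanishes identically on $\mathbb S_{Z_c}$.

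Fix $\textbf{\em a}\in \mathbb S_{Z_c}$. Because $\textbf{\em a}$ has commuting components, Proposition \ref{prodstar} applies. If $Q(\textbf{\em a})=0$, the conclusion $(Q*P)(\textbf{\em a})=0$ is immediate; otherwise,
\[ (Q*P)(\textbf{\em a})= Q(\textbf{\em a})\cdot P\bigl(Q(\textbf{\em a})^{-1}\textbf{\em a}\,Q(\textbf{\em a})\bigr).\]
The crucial point is then to check that the conjugated argument $Q(\textbf{\em a})^{-1}\textbf{\em a}\,Q(\textbf{\em a})$ still lies in $\mathbb S_{Z_c}$: this follows from observation (ii) above, since if $\textbf{\em a}=h^{-1}\textbf{\em b}\,h$ for some $\textbf{\em b}\in Z_c$ and $h\in \mathbb H\setminus\{0\}$, then
\[ Q(\textbf{\em a})^{-1}\textbf{\em a}\,Q(\textbf{\em a})=\bigl(hQ(\textbf{\em a})\bigr)^{-1}\textbf{\em b}\,\bigl(hQ(\textbf{\em a})\bigr)\in \mathbb S_{\textbf{\em b}}\subseteq \mathbb S_{Z_c}.\]
Since $P$ vanishes on all of $\mathbb S_{Z_c}$, we conclude $(Q*P)(\textbf{\em a})=0$, which is exactly what was needed.

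The only (mild) obstacle is the verification that $\mathbb S_{Z_c}$ is invariant under the conjugation appearing in Proposition \ref{prodstar}; once this closure property is isolated, the rest is a direct computation. I do not expect dropping the specific hypothesis ``$Z_c=\mathcal V_c(I)$'' from Corollary \ref{essealg} to cause any real difficulty, since that argument never used the ideal $I$ itself, only the symmetry of the vanishing locus.
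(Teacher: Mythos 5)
Your proof is correct and follows essentially the same approach as the paper, which simply observes that the argument given for the second part of Corollary \ref{essealg} applies verbatim: you reduce to showing left absorption, invoke Proposition \ref{prodstar}, and use invariance of $\mathbb S_{Z_c}$ under simultaneous conjugation. Making the closure of $\mathbb S_{Z_c}$ under $\textbf{\em a}\mapsto g^{-1}\textbf{\em a} g$ explicit, as you do, is a sensible elaboration of what the paper leaves implicit.
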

﻿
﻿
﻿
﻿
﻿
﻿
%
﻿
﻿
﻿
﻿
\begin{definition}
	We say that a slice algebraic set $\mathcal V_c(I)$ has {\em spherical symmetry} if $\mathcal V_c(I)= { \mathcal V_c(\mathcal S(I))=}\mathbb S_{\mathcal V_c(I)}$.
\end{definition}
﻿
%
﻿
﻿
﻿
%
﻿
\noindent As a particular case, we have the following result.
\begin{proposition}\label{twosided}
	Let $I$ be a two-sided ideal of $\mathbb H[q_1,\ldots, q_n]$. Then $\mathcal V_c(I)$ has spherical symmetry.
\end{proposition}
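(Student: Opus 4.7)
The plan is to reduce the statement to the inclusion $\mathbb{S}_{\mathcal{V}_c(I)}\subseteq \mathcal{V}_c(I)$: the reverse inclusion is automatic since $\textbf{\em a}\in\mathbb{S}_{\textbf{\em a}}$ for every $\textbf{\em a}\in\mathcal{V}_c(I)$. The key observation is that, because $I$ is two-sided, left $*$-multiplication by a quaternionic constant stays in $I$, and this interacts cleanly with conjugation on points with commuting components.

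First I would fix $\textbf{\em a}\in\mathcal{V}_c(I)$, so $\textbf{\em a}\in\mathbb{C}_K^n$ for some $K\in\mathbb{S}$, together with a nonzero $h\in\mathbb{H}$, and set $\textbf{\em b}=(h^{-1}a_1 h,\ldots,h^{-1}a_n h)$. The entries of $\textbf{\em b}$ pairwise commute because those of $\textbf{\em a}$ do, so $\textbf{\em b}$ lies in $\mathbb{C}_{K'}^n$ with $K'=h^{-1}Kh$; thus it suffices to show $\textbf{\em b}\in\mathcal{V}(I)$. Given $P(q_1,\ldots,q_n)=\sum_{\ell} q_1^{\ell_1}\cdots q_n^{\ell_n} a_\ell\in I$, the commutativity of the components of $\textbf{\em a}$ gives $(h^{-1}a_i h)^{\ell_i}=h^{-1}a_i^{\ell_i}h$ and rearranging the telescoping product yields
$$P(\textbf{\em b})=\sum_\ell h^{-1}(a_1^{\ell_1}\cdots a_n^{\ell_n})\, h\, a_\ell = h^{-1}\sum_\ell a_1^{\ell_1}\cdots a_n^{\ell_n}(h\, a_\ell).$$
The last sum is precisely $(h*P)(\textbf{\em a})$, since $*$-multiplication on the left by the constant polynomial $h$ acts on the coefficients by $a_\ell\mapsto h\, a_\ell$ (this is immediate from the definition of $*$).

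Now invoke the two-sidedness of $I$: since $P\in I$, also $h*P\in I$, and therefore $(h*P)(\textbf{\em a})=0$ because $\textbf{\em a}\in\mathcal{V}_c(I)$. Consequently $P(\textbf{\em b})=h^{-1}\cdot 0=0$. As $P$ was arbitrary in $I$, $\textbf{\em b}\in\mathcal{V}(I)\cap\mathbb{C}_{K'}^n\subseteq\mathcal{V}_c(I)$, which gives $\mathbb{S}_{\textbf{\em a}}\subseteq\mathcal{V}_c(I)$ and hence $\mathbb{S}_{\mathcal{V}_c(I)}\subseteq\mathcal{V}_c(I)$. Combined with Theorem~\ref{simmalg}, this yields $\mathcal{V}_c(I)=\mathbb{S}_{\mathcal{V}_c(I)}=\mathcal{V}_c(\mathcal{S}(I))$, i.e.\ spherical symmetry.

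The only delicate point is checking that the rearrangement $P(\textbf{\em b})=h^{-1}(h*P)(\textbf{\em a})$ is legitimate, which relies on (i) the commutativity of the coordinates of $\textbf{\em a}$ to collapse the conjugated monomials and (ii) the explicit form of $*$-multiplication by a constant on coefficients; neither requires more than the definitions recalled in Section~\ref{prel}. Once this identity is in place, the two-sided assumption on $I$ does the entire job.
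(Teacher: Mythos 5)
Your proof is correct and follows essentially the same route as the paper: both use two-sidedness to get $h*P\in I$ and then compute $(h*P)(\textbf{\em a})=hP(h^{-1}\textbf{\em a}h)$ to conclude $P$ vanishes on $\mathbb{S}_{\textbf{\em a}}$. The only difference is that you re-derive from the coefficient formula the special case of Proposition~\ref{prodstar} with a constant left factor, whereas the paper cites Proposition~\ref{prodstar} directly.
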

\begin{proof}
	Let $\textbf{\em a} \in \mathcal V_c(I)$. We want to show that $\mathbb S_{\aaa}\subset  \mathcal V_c(I)$. Let $P\in I$, for any $b \in \mathbb H\setminus \{0\}$ we have that $b*P \in I$, hence, using the notation of Proposition \ref{prodstar},
	\[bP(b^{-1}{\aaa}b)=0,\]
	which implies that $P$ vanishes on the entire sphere $\mathbb S_\aaa$. 
\end{proof}
\noindent As a consequence we have
\begin{corollary} If an ideal $I\subseteq  \mathbb H[q_1,\ldots,q_n]$ is generated by polynomials with real coefficients, then $\mathcal V_c(I)$ has spherical symmetry. 
\end{corollary}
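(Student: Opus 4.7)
The plan is to reduce this to Proposition \ref{twosided} by showing that a right ideal generated by polynomials with real coefficients is automatically two-sided. The key algebraic fact behind this reduction is that real-coefficient polynomials commute with every element of $\mathbb H[q_1,\ldots,q_n]$ under the slice product.

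First, I would verify the commutativity: if $R(q_1,\ldots,q_n)=\sum_{\ell_1,\ldots,\ell_n} q_1^{\ell_1}\cdots q_n^{\ell_n} r_{\ell_1,\ldots,\ell_n}$ with $r_{\ell_1,\ldots,\ell_n}\in\mathbb R$ and $Q\in \mathbb H[q_1,\ldots,q_n]$ is arbitrary, then a direct comparison of the coefficients in the definition of the $*$-product, together with the fact that real numbers commute with all quaternions, yields $R*Q = Q*R$. This is really a one-line check from the definition of the $*$-product.

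Next, write $I=\langle R_\alpha\;|\;\alpha\in A\rangle$ with each $R_\alpha\in \mathbb R[q_1,\ldots,q_n]$. An arbitrary element of $I$ has the form $\sum_\alpha R_\alpha * Q_\alpha$ (finite sum), and for any $P\in \mathbb H[q_1,\ldots,q_n]$ one computes
\[
P * \Bigl(\sum_\alpha R_\alpha * Q_\alpha\Bigr) \;=\; \sum_\alpha (P*R_\alpha)*Q_\alpha \;=\; \sum_\alpha R_\alpha * (P*Q_\alpha),
\]
using associativity of $*$ and the commutativity established in the first step. The resulting expression lies again in $I$, so $I$ is two-sided. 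Applying Proposition \ref{twosided} gives the desired spherical symmetry of $\mathcal V_c(I)$.

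There is essentially no obstacle here; the only thing to keep straight is that commutativity under $*$ fails in general but is restored for real-coefficient factors. If one preferred a direct argument avoiding Proposition \ref{twosided}, one could alternatively combine Proposition \ref{zerisferici} (a real-coefficient polynomial vanishing at $\textbf{\em a}$ vanishes on $\mathbb S_{\textbf{\em a}}$) with Proposition \ref{prodstar} applied to $R_\alpha * Q_\alpha$ at points of $\mathbb S_{\textbf{\em a}}$, noting that such points still have commuting components since simultaneous conjugation preserves commutativity. Either route yields the corollary immediately.
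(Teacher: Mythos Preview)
Your argument is correct and follows exactly the route the paper intends: the corollary is stated immediately after Proposition \ref{twosided} with the phrase ``As a consequence we have'' and no further proof, so the reduction you spell out---real-coefficient polynomials are central for $*$, hence $I$ is two-sided, hence Proposition \ref{twosided} applies---is precisely what the authors have in mind. Your optional alternative via Proposition \ref{zerisferici} is also fine but unnecessary here.
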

\noindent Furthermore we have the following equivalence.
\begin{proposition}\label{equivalence}
{The slice algebraic set}	$\mathcal V_c(I)$ has spherical symmetry if and only if $\sqrt{I}$ is a two-sided ideal.
\end{proposition}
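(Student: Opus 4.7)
The plan is to exploit two facts that have already been set up in the excerpt: Proposition \ref{vrad}, which tells us that passing to the radical does not change $\mathcal V_c$, and the Strong Nullstellensatz together with Proposition \ref{vanishing}, which identify $\sqrt{I}$ with $\mathcal J(\mathcal V_c(I))$. Combined with Proposition \ref{twosided} and Proposition \ref{Zc} (or equivalently Corollary \ref{essealg}), the equivalence reduces to a short symmetric argument.

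For the ($\Leftarrow$) direction, I would assume that $\sqrt{I}$ is two-sided. Applying Proposition \ref{twosided} to the two-sided ideal $\sqrt{I}$ yields that $\mathcal V_c(\sqrt{I})$ has spherical symmetry, i.e.\ $\mathcal V_c(\sqrt{I})=\mathbb S_{\mathcal V_c(\sqrt{I})}$. Now invoke Proposition \ref{vrad} to replace $\mathcal V_c(\sqrt{I})$ with $\mathcal V_c(I)$ on both sides, so that $\mathcal V_c(I)=\mathbb S_{\mathcal V_c(I)}$, which is precisely the spherical symmetry of $\mathcal V_c(I)$.

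For the ($\Rightarrow$) direction, I would suppose that $\mathcal V_c(I)$ has spherical symmetry, meaning $\mathcal V_c(I)=\mathbb S_{\mathcal V_c(I)}$. Taking $\mathcal J$ of both sides gives the equality of right ideals
\[
\mathcal J(\mathcal V_c(I))=\mathcal J(\mathbb S_{\mathcal V_c(I)}).
\]
By Proposition \ref{vanishing}\,(1) together with Theorem \ref{snull}, the left-hand side equals $\sqrt{I}$. By Proposition \ref{Zc}, applied to the set $Z_c=\mathcal V_c(I)$ of points with commuting components, the right-hand side is a two-sided ideal. Hence $\sqrt{I}$ is two-sided, as required.

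The argument is essentially a bookkeeping of previously established facts, so I do not anticipate a genuine obstacle; the only point that deserves care is the appeal to Proposition \ref{Zc}, which requires that $\mathcal V_c(I)$ be composed of points with commuting components, and this is built into the definition of $\mathcal V_c(I)=\mathcal V(I)\cap\bigcup_{K\in \mathbb S}\mathbb C_K^n$. Once this is observed, both implications are one-line deductions.
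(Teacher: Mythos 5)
Your proof is correct and follows essentially the same route as the paper: for the forward direction, apply $\mathcal J$ to the spherical-symmetry equality and identify the left side with $\sqrt{I}$ via the Nullstellensatz and the right side as two-sided (you cite Proposition~\ref{Zc}, the paper cites the second part of Corollary~\ref{essealg}, which is the same fact specialized to $Z_c=\mathcal V_c(I)$); for the reverse direction, combine Proposition~\ref{twosided} with $\mathcal V_c(I)=\mathcal V_c(\sqrt I)$ from Proposition~\ref{vrad}, exactly as the paper does.
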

\begin{proof}
Indeed $\sqrt{I}=\mathcal J(\mathcal V_c(I))=\mathcal J(\mathbb S_{\mathcal V_c(I)})$ which is two-sided, thanks to  Proposition \ref{essealg}.	
The other direction follows directly from Proposition \ref{twosided}, recalling that $\mathcal V_c(I)=\mathcal V_c(\sqrt{I})$.
﻿
﻿
	\end{proof}
\section{On the correspondence between algebraic sets and ideals}
\noindent Given two { right} ideals $I_1,I_2$ of $\mathbb{H}[q_1,\ldots,q_n]$, we define the $*$-product ideal $I_1*I_2$ to be the { right} ideal generated by the $*$-product of polynomials of 
$I_1$ and $I_2$, namely $I_1*I_2:=\langle P*Q,\ P\in I_1, \ Q\in I_2\rangle$.
﻿
\noindent We have the following relations concerning algebraic sets associated with $*$-product, intersection and sum of two ideals.   
\begin{proposition}\label{corr}
	Let $I_1,I_2$ be { right} ideals of $\mathbb H[q_1,\ldots,q_n]$. Then
	\begin{enumerate}
		\item  $\mathcal{V}_c(I_1*I_2)\subseteq \left(\mathcal{V}_c(I_1)\cup\mathcal{V}_c( \mathcal{S}(I_2))\right)$;
		\item $\mathcal V_c(I_1 \cap I_2)\supseteq \left(\mathcal{V}_c(I_1)\cup\mathcal{V}_c(I_2) \right)$;
		\item $\mathcal{V}_c(I_1+I_2)=\left(\mathcal{V}_c(I_1)\cap\mathcal{V}_c(I_2)\right)$.
	\end{enumerate}
\end{proposition}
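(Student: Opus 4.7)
The plan is to dispatch the three statements in order of increasing difficulty, with essentially all the real content concentrated in (1).

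For (3), I would first use the inclusions $I_j \subseteq I_1+I_2$, which give $\mathcal V_c(I_1+I_2)\subseteq \mathcal V_c(I_1)\cap \mathcal V_c(I_2)$ for free; the reverse inclusion follows because every element of $I_1+I_2$ is a sum $P+Q$ with $P\in I_1$, $Q\in I_2$, and such a sum vanishes at any common zero by linearity. Statement (2) is the same type of observation: $I_1\cap I_2\subseteq I_j$ forces $\mathcal V_c(I_j)\subseteq \mathcal V_c(I_1\cap I_2)$ for each $j$, and taking the union yields the claim.

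The core case is (1). Fix $\mathbf{a}\in \mathcal V_c(I_1*I_2)\setminus \mathcal V_c(I_1)$; the plan is to force $\mathbf{a}\in \mathcal V_c(\mathcal S(I_2))$. By hypothesis there is $P\in I_1$ with $P(\mathbf{a})\neq 0$. For every $Q\in I_2$ the product $P*Q$ lies in $I_1*I_2$ and hence vanishes at $\mathbf{a}$. Since $\mathbf{a}$ has commuting components, Proposition \ref{prodstar} applies and, because $P(\mathbf{a})\neq 0$, yields that $Q$ vanishes at the conjugated tuple $\tilde{\mathbf{a}}:=(P(\mathbf{a})^{-1}a_1P(\mathbf{a}),\dots,P(\mathbf{a})^{-1}a_nP(\mathbf{a}))\in \mathbb S_{\mathbf{a}}$.

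The next step exploits that pairwise commuting quaternionic coordinates must share a common complex slice, so $\mathbf{a}\in \mathbb C_J^n$ for some $J\in \mathbb S$ and $\mathbb S_{\mathbf{a}}$ is an arranged spherical set. Proposition \ref{corrsimm} then upgrades the existence of a zero of $Q$ on $\mathbb S_{\mathbf{a}}$ to the identical vanishing of $Q^s$ on $\mathbb S_{\mathbf{a}}$, hence $Q^s(\mathbf{a})=0$. Every generator $Q^s$ of $\mathcal S(I_2)$ therefore vanishes at $\mathbf{a}$, and one final application of Proposition \ref{prodstar} to an arbitrary element $\sum_i Q_i^s*T_i\in \mathcal S(I_2)$ (using $Q_i^s(\mathbf{a})=0$) shows that the whole ideal vanishes at $\mathbf{a}$, so $\mathbf{a}\in \mathcal V_c(\mathcal S(I_2))$.

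The delicate point will be this upgrade from the zero of $Q$ at the conjugate point $\tilde{\mathbf{a}}\in \mathbb S_{\mathbf{a}}$ to the vanishing of $Q^s$ at $\mathbf{a}$ itself: it genuinely requires $\mathbb S_{\mathbf{a}}$ to be arranged, and hence requires the small but non-trivial observation that a tuple of pairwise commuting non-real quaternions has a common imaginary unit (the commutator relation $JK=KJ$ in $\mathbb S$ forcing $J=\pm K$). Once this is granted the rest is formal bookkeeping with Proposition \ref{prodstar}.
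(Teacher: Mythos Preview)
Your argument is correct and is essentially a fleshed-out version of the paper's own proof, which merely cites Proposition \ref{prodstar} together with the inclusion-reversing behaviour of $\mathcal V_c$; your explicit appeal to Proposition \ref{corrsimm} in part (1) is exactly the detail hidden behind that citation. One small remark: the step you single out as delicate --- that pairwise commuting coordinates lie in a common slice $\mathbb C_J^n$ --- is already built into the definition $\mathcal V_c(I)=\mathcal V(I)\cap\bigcup_{K}\mathbb C_K^n$, so any $\textbf{\em a}\in\mathcal V_c(I_1*I_2)$ is in some $\mathbb C_J^n$ by hypothesis and $\mathbb S_{\textbf{\em a}}$ is arranged automatically.
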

\begin{proof}
The proof follows taking into account the expression of the $*$-product of polynomials evaluated on points with commuting components given in Proposition \ref{prodstar} and the fact that the operator $\mathcal V$ reverses the inclusions.	
	\end{proof}
\noindent In the particular case in which one of the two ideals is generated by polynomials with real coefficients we have the following result.
\begin{proposition}
	Let $I_1,I_2$ be { right} ideals of $\mathbb H[q_1,\ldots,q_n]$ such that $I_2=\langle G_1,\ldots,G_n\rangle$ with $G_\ell \in \mathbb R[q_1,\ldots,q_n]$ for any $\ell=1,\ldots,n$. Then
	\[\mathcal V_c(I_1 \cap I_2)=\mathcal V_c(I_1 * I_2)=\mathcal V_c(I_1)\cup \mathcal V_c(I_2).\]
\end{proposition}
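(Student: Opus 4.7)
The plan hinges on the observation that polynomials with real coefficients are central in the ring $(\mathbb{H}[q_1,\ldots,q_n], +, *)$: if $G \in \mathbb{R}[q_1,\ldots,q_n]$ and $Q \in \mathbb{H}[q_1,\ldots,q_n]$, then a direct inspection of the coefficient-wise formula for the $*$-product shows $G * Q = Q * G$, because the real coefficients of $G$ commute with every quaternion appearing in $Q$. Consequently, the right ideal $I_2 = \langle G_1, \ldots, G_n \rangle$ generated by real polynomials is in fact a \emph{two-sided} ideal.

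This centrality is precisely what upgrades both inclusions in Proposition \ref{corr}. First, by Proposition \ref{twosided}, $\mathcal V_c(I_2)$ has spherical symmetry, so by the very definition of spherical symmetry, $\mathcal V_c(I_2) = \mathcal V_c(\mathcal S(I_2))$. Substituting into Proposition \ref{corr}(1) yields
\[\mathcal V_c(I_1*I_2) \subseteq \mathcal V_c(I_1) \cup \mathcal V_c(\mathcal S(I_2)) = \mathcal V_c(I_1) \cup \mathcal V_c(I_2).\]
Second, since $I_2$ is two-sided, for any $P \in I_1$ and $Q \in I_2$ the product $P*Q$ lies in $I_1$ (because $I_1$ is a right ideal) and in $I_2$ (because $I_2$ is two-sided), hence $I_1*I_2 \subseteq I_1 \cap I_2$. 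Applying $\mathcal V_c$, which reverses inclusions, gives $\mathcal V_c(I_1 \cap I_2) \subseteq \mathcal V_c(I_1*I_2)$.

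Chaining these with the inclusion $\mathcal V_c(I_1) \cup \mathcal V_c(I_2) \subseteq \mathcal V_c(I_1 \cap I_2)$ supplied by Proposition \ref{corr}(2) produces the closed cycle
\[\mathcal V_c(I_1) \cup \mathcal V_c(I_2) \subseteq \mathcal V_c(I_1 \cap I_2) \subseteq \mathcal V_c(I_1*I_2) \subseteq \mathcal V_c(I_1) \cup \mathcal V_c(I_2),\]
which forces all three sets to coincide, yielding the statement. The only genuinely technical input is the centrality of real-coefficient polynomials under the $*$-product; this is a straightforward coefficient computation, and the rest of the argument is short bookkeeping built entirely on results already established in the excerpt. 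No step presents a real obstacle: the main conceptual point is to realize that the hypothesis on the generators of $I_2$ is strong enough to promote $I_2$ to a two-sided ideal, which is the exact feature needed to close the chain of inclusions.
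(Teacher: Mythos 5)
Your proof is correct and shares the paper's overall architecture: close a cycle of three inclusions $\mathcal V_c(I_1)\cup\mathcal V_c(I_2)\subseteq\mathcal V_c(I_1\cap I_2)\subseteq\mathcal V_c(I_1*I_2)\subseteq\mathcal V_c(I_1)\cup\mathcal V_c(I_2)$, with the first inclusion coming from $I_1\cap I_2\subseteq I_1, I_2$ and the second from $I_1*I_2\subseteq I_1\cap I_2$ (which you justify by observing that the centrality of the real-coefficient generators makes $I_2$ two-sided, essentially the same reason the paper invokes). The difference is in the third inclusion, $\mathcal V_c(I_1*I_2)\subseteq\mathcal V_c(I_1)\cup\mathcal V_c(I_2)$: the paper proves it directly, taking $\textbf{\em a}\in\mathcal V_c(I_1*I_2)\setminus\mathcal V_c(I_1)$, choosing $F_0\in I_1$ with $F_0(\textbf{\em a})\neq 0$, and computing via Proposition \ref{prodstar} that $0=F_0(\textbf{\em a})G_\ell(F_0(\textbf{\em a})^{-1}\textbf{\em a}F_0(\textbf{\em a}))=G_\ell(\textbf{\em a})F_0(\textbf{\em a})$ because $G_\ell$ has real coefficients, forcing $G_\ell(\textbf{\em a})=0$. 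You instead reuse Proposition \ref{corr}(1) and then collapse $\mathcal V_c(\mathcal S(I_2))$ to $\mathcal V_c(I_2)$ via the spherical symmetry granted by Proposition \ref{twosided}. Both are valid; your version leans more on machinery already developed in Section 3, while the paper's computation is more elementary and self-contained. One small phrasing caveat: when you argue $I_1*I_2\subseteq I_1\cap I_2$, you state that the generators $P*Q$ land in $I_1\cap I_2$, which suffices because $I_1\cap I_2$ is itself a right ideal and $I_1*I_2$ is the right ideal they generate — worth making that step explicit.
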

\begin{proof}
	The fact that $I_2$ is generated by polynomials with real coefficients, guarantees that $I_1*I_2\subseteq I_1\cap I_2 $, and hence 
	\[\mathcal V_c(I_1\cap I_2)\subseteq \mathcal V_c(I_1*I_2). \]
	Consider now $\textbf{\em a}\in \mathcal V_c(I_1 * I_2)$. Then, for any $F\in I_1, G \in I_2$, $F*G(\textbf{\em a})=0$. If $\textbf{\em a}\not \in \mathcal V_c(I_1)$, then there exists $F_0\in I_1$ such that $F_0(\textbf{\em a})\neq 0$. Then, recalling Proposition \ref{prodstar}, for any $\ell=1,\ldots,n$,  we have \[0=F_0(\textbf{\em a})G_\ell(F_0(\textbf{\em a})^{-1}\textbf{\em a}F_0(\textbf{\em a}))=G_\ell(\textbf{\em a})F_0(\textbf{\em a})\]
	where the last equality follows from the fact that $G_\ell$ has real coefficients. Hence $G_\ell(\textbf{\em a})=0$ for any $\ell$, that is $\textbf{\em a}\in \mathcal V_c(I_2)$. 
	Thus \[\mathcal V_c(I_1 * I_2)\subseteq \mathcal V_c(I_1)\cup \mathcal V_c(I_2).\]
	To complete the proof, notice that, since $I_1 \cap I_2$ is contained in $I_1$ and in $I_2$, we have 
	\[\mathcal V_c(I_1)\cup \mathcal V_c( I_2)\subseteq \mathcal V_c(I_1\cap I_2).\] 		 
\end{proof}	
\noindent As a particular case, we have
\begin{corollary}\label{essei}
		Let $I_1,I_2$ be {right} ideals of $\mathbb H[q_1,\ldots,q_n]$. Then
		\[\mathcal{V}_c(I_1* \mathcal{S}(I_2))=\mathcal{V}_c(I_1)\cup\mathcal{V}_c( \mathcal{S}(I_2))=\mathcal{V}_c(I_1\cap \mathcal{S}(I_2)).\]
\end{corollary}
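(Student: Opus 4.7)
The plan is to recognize this corollary as a direct specialization of the preceding proposition, once one verifies that $\mathcal{S}(I_2)$ satisfies its hypothesis of being generated by polynomials with real coefficients.

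First I would check that for every $P\in\mathbb{H}[q_1,\ldots,q_n]$ the symmetrization $P^s$ lies in $\mathbb{R}[q_1,\ldots,q_n]$. Writing $P=\sum q_1^{\ell_1}\cdots q_n^{\ell_n}a_{\ell_1,\ldots,\ell_n}$, Definition \ref{R-coniugata2} gives $P^c=\sum q_1^{\ell_1}\cdots q_n^{\ell_n}\overline{a_{\ell_1,\ldots,\ell_n}}$, and the $*$-product formula yields the coefficient of $q_1^{n_1}\cdots q_n^{n_n}$ in $P*P^c$ as
\[c_{n_1,\ldots,n_n}=\sum_{\substack{0\le r_1\le n_1\\ \cdots \\ 0\le r_n\le n_n}}a_{r_1,\ldots,r_n}\,\overline{a_{n_1-r_1,\ldots,n_n-r_n}}.\]
The index substitution $s_j=n_j-r_j$, combined with $\overline{uv}=\bar v\,\bar u$, gives $\overline{c_{n_1,\ldots,n_n}}=c_{n_1,\ldots,n_n}$, so each $c_{n_1,\ldots,n_n}\in\mathbb{R}$; alternatively one could read off the same conclusion from the Splitting Lemma \ref{splitting} since the $G$-component of $P^s$ vanishes. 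In particular, every generator $P^s$ of $\mathcal{S}(I_2)$ has real coefficients.

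Next, since by definition $\mathcal{S}(I_2)=\langle P^s\mid P\in I_2\rangle$ and $\mathbb{H}[q_1,\ldots,q_n]$ is Noetherian, a finite subfamily $P_1^s,\ldots,P_m^s$ already generates $\mathcal{S}(I_2)$, and each generator has real coefficients. The hypothesis of the preceding proposition is therefore satisfied by the pair $(I_1,\mathcal{S}(I_2))$, and applying it with $\mathcal{S}(I_2)$ in place of $I_2$ gives at once
\[\mathcal{V}_c(I_1*\mathcal{S}(I_2))=\mathcal{V}_c(I_1)\cup\mathcal{V}_c(\mathcal{S}(I_2))=\mathcal{V}_c(I_1\cap\mathcal{S}(I_2)),\]
which is the desired chain of equalities. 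The only step with any real content is the coefficient computation showing $P^s\in\mathbb{R}[q_1,\ldots,q_n]$; everything else is formal bookkeeping, so I do not expect any genuine obstacle.
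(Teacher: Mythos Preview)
Your proposal is correct and matches the paper's approach exactly: the paper presents this corollary with no proof, simply labeling it ``as a particular case'' of the preceding proposition, and you have correctly identified and supplied the only nontrivial ingredient---that every symmetrization $P^s$ has real coefficients, so $\mathcal{S}(I_2)$ satisfies the hypothesis of that proposition. Your direct coefficient computation via the involution $r_j\mapsto n_j-r_j$ and $\overline{uv}=\bar v\,\bar u$ is clean and sufficient; the Noetherian remark is also already made in the paper just after the definition of $\mathcal S(I)$.
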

%
﻿
﻿
%
\section{Irreducible slice algebraic sets}\label{S1}
﻿
﻿
﻿
﻿
﻿
﻿
﻿
﻿
﻿
﻿
﻿
﻿
﻿
﻿
﻿
﻿
﻿
\noindent ﻿{Let us begin by giving the definition of reducibility of slice algebraic sets.}
﻿
\begin{definition} \label{red}

Given a right ideal $I\subseteq \mathbb{H}[q_1,\ldots,q_n]$, the slice algebraic set $\mathcal{V}(I)$, 
is said to be {\em reducible} whenever there exist two proper, non-trivial, right ideals $I_1$ and $I_2$ in $\mathbb{H}[q_1,\ldots,q_n]$, not contained one into another, such that 
$$\mathcal{V}(I)=\mathcal{V}(I_1)\cup \mathcal{V}(I_2).$$
Analogously, $\mathcal{V}_c(I)$ is said to be {\em reducible} if there exist two proper, non-trivial, right ideals $I_1$ and $I_2$ not contained one into another, such that 
$$\mathcal{V}_c(I)=\mathcal{V}_c(I_1)\cup \mathcal{V}_c(I_2).$$
﻿
\noindent If $\mathcal{V}(I)$ (or $\mathcal{V}_c(I)$) is not reducible, it is called {\em irreducible}.
﻿
\end{definition} %
\noindent Thanks to Proposition 3.14 in \cite{Nul2}, we have the following result. 

{\begin{proposition}\label{GeneratoriSfere}
	Let $\mathbb{S}_{(a_1,\cdots,a_n)}\subset\mathbb{H}^n$ be an arranged spherical set and suppose $a_\ell \not \in \mathbb{R}$ for any $\ell$. Then $\mathbb S_{(a_1,\ldots, a_n)}=\mathcal{V}(I)$ where $I$ is a two-sided radical ideal generated by polynomials with real coefficients.
\end{proposition}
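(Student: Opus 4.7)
The plan is to exhibit $I$ explicitly through real-coefficient generators and then verify two-sidedness, radicality, and $\mathcal V(I)=\mathbb S_{(a_1,\ldots,a_n)}$. Since the set is arranged, fix $J\in \mathbb{S}$ with $(a_1,\ldots,a_n)\in \mathbb{C}_J^n$ and write $a_\ell=x_\ell+y_\ell J$ with $y_\ell\in\mathbb{R}\setminus\{0\}$ (this is where the hypothesis $a_\ell\notin \mathbb{R}$ is used). Set
\[R_1(q_1):=q_1^2-2x_1q_1+(x_1^2+y_1^2)\]
and, for each $m=2,\ldots,n$,
\[L_{1,m}(q_1,q_m):=y_m(q_1-x_1)-y_1(q_m-x_m);\]
all of these lie in $\mathbb{R}[q_1,\ldots,q_n]$. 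Let $I:=\langle R_1, L_{1,2},\ldots,L_{1,n}\rangle$ be the right ideal they generate.

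A direct inspection of the definition of the $*$-product shows that $\mathbb{R}[q_1,\ldots,q_n]$ is contained in the center of $(\mathbb{H}[q_1,\ldots,q_n],+,*)$, so $I$ is automatically two-sided. For the identity $\mathcal{V}(I)=\mathbb{S}_{(a_1,\ldots,a_n)}$: if $(b_1,\ldots,b_n)\in \mathcal{V}(I)$, the equation $b_1^2-2x_1b_1+(x_1^2+y_1^2)=0$ forces $b_1=x_1+y_1K$ for some $K\in\mathbb{S}$, and then $L_{1,m}(b_1,b_m)=0$ combined with $y_1\neq 0$ yields $b_m=x_m+y_mK$, whence $(b_1,\ldots,b_n)\in \mathbb{S}_{(a_1,\ldots,a_n)}$. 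Conversely, for any $K\in\mathbb{S}$, Proposition \ref{zerisferici} applied to $R_1\in\mathbb{R}[q_1,\ldots,q_n]$ (which vanishes at $a_1$) shows that $R_1$ vanishes at $x_1+y_1K$, and each $L_{1,m}$ vanishes at $(x_1+y_1K,\ldots,x_n+y_nK)$ by a one-line computation.

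The step I expect to require most care is radicality, namely $\sqrt I\subseteq I$. By the Strong Nullstellensatz (Theorem \ref{snull}) together with the description of $\mathcal J$ on sets of points with commuting components (the remark following \eqref{jvci}), one has $\sqrt I=\mathcal J(\mathbb S_{(a_1,\ldots,a_n)})=\{P : P_{|_{\mathbb S_{(a_1,\ldots,a_n)}}}=0\}$. To prove $\mathcal J(\mathbb S_{(a_1,\ldots,a_n)})\subseteq I$ I would use a dimension count: the linear relations give $q_m\equiv (y_m/y_1)q_1+(x_m-y_mx_1/y_1)\pmod I$ (the division by $y_1$ being legitimate since $y_1\in\mathbb R\setminus\{0\}$ is central), and $R_1$ reduces every power of $q_1$ modulo $I$ to degree at most one, so $\mathbb H[q_1,\ldots,q_n]/I$ is spanned as a right $\mathbb H$-module by the classes of $1$ and $q_1$, hence $\dim_{\mathbb H}\mathbb H[q_1,\ldots,q_n]/I\leq 2$. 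On the other hand, evaluation sends these classes to the $\mathbb H$-valued functions $K\mapsto 1$ and $K\mapsto x_1+y_1K$ on $\mathbb S_{(a_1,\ldots,a_n)}$, which are right $\mathbb H$-linearly independent because $y_1\neq 0$ and $K\mapsto Kb$ is non-constant for any $b\neq 0$. The factored restriction map $\mathbb H[q_1,\ldots,q_n]/I\to \mathbb H^{\mathbb S_{(a_1,\ldots,a_n)}}$ is therefore injective, so its kernel $\mathcal J(\mathbb S_{(a_1,\ldots,a_n)})/I$ vanishes, giving $I=\mathcal J(\mathbb S_{(a_1,\ldots,a_n)})=\sqrt I$.
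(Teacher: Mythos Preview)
Your proof is correct. The paper itself does not prove this proposition; it simply cites Proposition~3.14 of \cite{Nul2}. Your argument supplies a complete, self-contained proof: the explicit real generators $R_1$ and $L_{1,m}$ are the natural ones, the verification of $\mathcal V(I)=\mathbb S_{(a_1,\ldots,a_n)}$ is straightforward, and the radicality step via the dimension bound $\dim_{\mathbb H}\mathbb H[q_1,\ldots,q_n]/I\le 2$ combined with injectivity of the restriction map is clean and rigorous. Since the paper defers entirely to the external reference, there is no internal proof to compare methods with; your approach is presumably close to what \cite{Nul2} does, and in any case it stands on its own.
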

}
\noindent Moreover 
﻿
\begin{proposition}\label{S_a} Any arranged spherical set $\mathbb{S}_{(a_1,\cdots,a_n)}\in\mathbb{H}^n$ { with $a_\ell \not \in \mathbb{R}$ for any $\ell$} is an irreducible slice algebraic set.
\end{proposition}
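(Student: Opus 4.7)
\emph{Plan.} The approach is to parametrize $\mathbb{S}_{\textbf{a}}$ by the sphere $\mathbb{S}$ of imaginary units and to classify the zero sets of individual slice regular polynomials on it. Writing $a_\ell = x_\ell + y_\ell J$ with $x_\ell, y_\ell \in \mathbb{R}$ and $y_\ell \neq 0$ (by the hypothesis $a_\ell \notin \mathbb{R}$), the map $\mathbb{S} \to \mathbb{S}_{\textbf{a}}$ given by $K \mapsto \textbf{b}_K := (x_1 + y_1 K, \ldots, x_n + y_n K)$ is a bijection; in particular $\mathbb{S}_{\textbf{a}}$ is uncountable, and the components of each $\textbf{b}_K$ all lie in $\mathbb{C}_K$ and hence commute.

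The key computation is that for any slice regular polynomial $P(q_1,\ldots,q_n) = \sum q_1^{\ell_1}\cdots q_n^{\ell_n}\, a_{\ell_1,\ldots,\ell_n}$, the evaluation $P(\textbf{b}_K)$ is \emph{affine in $K$}. Indeed, using $K^2 = -1$ and the fact that $K$ commutes with real numbers, each monomial $(x_1+y_1K)^{\ell_1}\cdots(x_n+y_nK)^{\ell_n}$ expands as $r_{\ell_1,\ldots,\ell_n} + s_{\ell_1,\ldots,\ell_n}\, K$ with $r, s \in \mathbb{R}$ independent of $K$; summing against the coefficients yields $P(\textbf{b}_K) = A + KB$ with $A, B \in \mathbb{H}$ fixed quaternions depending only on $P$ and on the $x_j, y_j$. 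Solving $A + KB = 0$ for $K \in \mathbb{H}$ gives a unique solution $K = -AB^{-1}$ when $B \neq 0$, and either no solution (if $A \neq 0$) or all of $\mathbb{H}$ (if $A = 0$) when $B = 0$. Intersecting with $\mathbb{S}$, one obtains the trichotomy: $Z_P \cap \mathbb{S}_{\textbf{a}}$ is either empty, a single point, or all of $\mathbb{S}_{\textbf{a}}$.

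With this trichotomy in hand the irreducibility is almost automatic. Assume for contradiction $\mathbb{S}_{\textbf{a}} = \mathcal{V}(I_1) \cup \mathcal{V}(I_2)$ is a decomposition as in Definition~\ref{red}, so that both $\mathcal{V}(I_j) \subsetneq \mathbb{S}_{\textbf{a}}$. For each $j$ pick $P_j \in I_j$ not vanishing identically on $\mathbb{S}_{\textbf{a}}$; by the trichotomy $Z_{P_j} \cap \mathbb{S}_{\textbf{a}}$ contains at most one point, and since $\mathcal{V}(I_j) \subseteq Z_{P_j} \cap \mathbb{S}_{\textbf{a}}$, the same holds for $\mathcal{V}(I_j)$. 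Consequently $\mathcal{V}(I_1) \cup \mathcal{V}(I_2)$ has at most two points, contradicting the uncountability of $\mathbb{S}_{\textbf{a}}$.

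The only mildly delicate step is the affine reduction $P(\textbf{b}_K) = A + KB$, which requires careful bookkeeping of the non-commutativity when pulling $K$ past the quaternionic coefficients $a_{\ell_1,\ldots,\ell_n}$; beyond that the argument is purely set-theoretic and presents no serious obstacle.
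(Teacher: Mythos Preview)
Your proof is correct and rests on the same key fact as the paper's: a slice regular polynomial either vanishes identically on the arranged spherical set $\mathbb{S}_{\textbf{\em a}}$ or at most at one point of it. The paper invokes this as Proposition~\ref{zericinesi} (from \cite{Nul2}) and then argues that if $\mathbb{S}_{\textbf{\em a}}=\mathcal{V}(I_1)\cup\mathcal{V}(I_2)$ is a nontrivial decomposition, one of the pieces must contain at least two points and hence, by that proposition, the whole sphere---a contradiction. You instead re-derive the trichotomy from scratch via the explicit parametrization $K\mapsto \textbf{\em b}_K$ and the affine reduction $P(\textbf{\em b}_K)=A+KB$, then conclude by cardinality. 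Your route is more self-contained (it does not appeal to the cited result), while the paper's is shorter because the heavy lifting has already been packaged. The affine computation you flag as ``mildly delicate'' is indeed fine: since each monomial $(x_1+y_1K)^{\ell_1}\cdots(x_n+y_nK)^{\ell_n}$ lies in $\mathbb{C}_K$ and the coefficients $a_{\ell_1,\ldots,\ell_n}$ sit on the right, factoring $K$ out on the left is legitimate.
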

\begin{proof}
{ Proposition \ref{GeneratoriSfere} yields that arranged spherical sets are always slice algebraic sets of the form $\mathcal V(I)$.} Suppose now, by contradiction, that $\mathbb{S}_{(a_1,\cdots,a_n)}=\mathcal{V}(I_1)\cup\mathcal{V}(I_2)$ is reducible. Then, without loss of generality, there exist at least two points $(a'_1,\ldots, a_n')$ and $(a''_1,\ldots, a_n'')\in \mathcal{V}(I_1)\subset \mathcal{V}(I)$. Since they belong to the same spherical set, the entire sphere through $(a'_1,\ldots, a_n')$ belongs to $\mathcal{V}(I_1)$, that is $\mathcal{V}(I)=\mathbb{S}_{(a'_1,\ldots, a_n')}=\mathbb{S}_{(a''_1,\ldots, a_n'')}\subset \mathcal{V}(I_1)$. 
	\end{proof}
\noindent {Another family of irredudible slice algebraic set is given by the following proposition.}
\begin{proposition}\label{SxS} Any set of the form  $\mathbb{S}_{a}\times\mathbb{S}_{b}\subset \mathbb H^2$ with $a,b\notin \mathbb R$ 
	is an irreducible algebraic set. 
	\end{proposition}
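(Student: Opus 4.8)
The plan is to mimic the argument used for arranged spherical sets in Proposition \ref{S_a}, but now exploiting the product structure of $\mathbb{S}_a\times\mathbb{S}_b$ together with the fact that a slice regular polynomial in two variables behaves, on each sphere in each coordinate slot, like a polynomial of degree at most two whose extra zeros force the whole sphere (Proposition \ref{zericinesi}). First I would record that $\mathbb{S}_a\times\mathbb{S}_b$ is indeed a slice algebraic set of the form $\mathcal V(I)$: since $a,b\notin\mathbb R$, by Proposition \ref{GeneratoriSfere} each factor $\mathbb{S}_a=\mathcal V(I_a)$ and $\mathbb{S}_b=\mathcal V(I_b)$ with $I_a, I_b$ generated by real-coefficient polynomials (in one variable, viewed inside $\mathbb H[q_1,q_2]$), so $\mathbb{S}_a\times\mathbb{S}_b=\mathcal V(I_a+I_b)$ by part (3) of Proposition \ref{corr}.

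Next I would argue irreducibility by contradiction: suppose $\mathbb{S}_a\times\mathbb{S}_b=\mathcal V(I_1)\cup\mathcal V(I_2)$ with $\mathcal V(I_1),\mathcal V(I_2)$ proper closed subsets, neither containing the other. The key geometric observation is that each "horizontal slice" $\mathbb{S}_a\times\{w\}$ (for fixed $w\in\mathbb{S}_b$) and each "vertical slice" $\{v\}\times\mathbb{S}_b$ is itself an arranged spherical set in $\mathbb H^2$ once we pick the complex line through $v$ resp.\ $w$, so by Proposition \ref{S_a} it is irreducible; hence each such slice lies entirely in $\mathcal V(I_1)$ or entirely in $\mathcal V(I_2)$. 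Consider the subset $B_1\subseteq\mathbb{S}_b$ of those $w$ for which $\mathbb{S}_a\times\{w\}\subseteq\mathcal V(I_1)$, and $B_2$ defined symmetrically; these cover $\mathbb{S}_b$. I claim both are "algebraically closed" in the slice sense: if a polynomial $P\in I_1$ vanishes on $\mathbb{S}_a\times\{w\}$ for all $w$ in some set that is dense in a sphere $\mathbb{S}_b\cap\mathbb C_K$, then by continuity (and Proposition \ref{zericinesi} applied in the second variable, to promote vanishing at three points of a sphere to vanishing on the whole sphere) it vanishes on $\mathbb{S}_a\times\mathbb{S}_b$; running this for a finite generating set of $I_1$ shows $\mathcal V(I_1)\supseteq\mathbb{S}_a\times\mathbb{S}_b$, a contradiction unless $B_1$ is "small." Made precise, $B_1$ and $B_2$ are each a finite union of subspheres or single points of $\mathbb{S}_b$ together with their complements being of the same type; since $\mathbb{S}_b$ is irreducible as a slice algebraic set and $B_1\cup B_2=\mathbb{S}_b$, one of them, say $B_1$, must be all of $\mathbb{S}_b$, forcing $\mathbb{S}_a\times\mathbb{S}_b\subseteq\mathcal V(I_1)$ and contradicting properness.

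The main obstacle I anticipate is making the "fibrewise" dichotomy argument rigorous: one must check that the set $B_1$ of good fibres is not merely a set-theoretic object but is controlled by finitely many polynomial conditions, so that it is itself constrained enough to apply irreducibility of $\mathbb{S}_b$. The cleanest way is probably to fix a generating set $P_1,\dots,P_m$ of $I_1$ (using Noetherianity of $\mathbb H[q_1,q_2]$, cf.\ the remark after the definition of $\mathcal S(I)$), split each $P_j$ along orthogonal units $K,L$ as in Lemma \ref{splitting}, and observe that "$\mathbb{S}_a\times\{w\}\subseteq\mathcal V(I_1)$" translates into the vanishing of finitely many honest complex polynomials in the $\mathbb C_K$-coordinate of $w$; the zero set of those in $\mathbb{S}_b\cap\mathbb C_K^{\,}$ is finite unless they vanish identically, and identical vanishing on a dense subset of $\mathbb{S}_b$ forces (via Proposition \ref{zericinesi} in the $q_2$ variable) $P_j\equiv 0$ on all of $\mathbb{S}_a\times\mathbb{S}_b$. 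So either $B_1$ is finite-plus-its-sphere-completion is all of $\mathbb{S}_b$, and symmetrically for $B_2$; since two such sets cannot both be "finite" and cover the infinite set $\mathbb{S}_b$, one of $\mathcal V(I_1),\mathcal V(I_2)$ is everything. This yields the desired contradiction and completes the proof.
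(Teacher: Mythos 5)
There is a genuine gap in your proposal at its central step. You claim that each ``horizontal slice'' $\mathbb{S}_a\times\{w\}$ (for fixed $w\in\mathbb{S}_b$) is an arranged spherical set, and then invoke Proposition~\ref{S_a} to conclude that each such slice must lie entirely inside $\mathcal V(I_1)$ or $\mathcal V(I_2)$. But $\mathbb{S}_a\times\{w\}$ is \emph{not} a spherical set. The spherical set $\mathbb{S}_{(a,w)}=\{(g^{-1}ag,\,g^{-1}wg):g\in\mathbb H^*\}$ rotates both coordinates simultaneously, so its second coordinate sweeps the whole sphere $\mathbb S_w$; by contrast, $\mathbb{S}_a\times\{w\}$ keeps the second coordinate frozen at $w$. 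The two sets coincide only when $g^{-1}wg=w$ for all $g$, i.e. when $w\in\mathbb R$ --- which is precisely excluded by the hypothesis $b\notin\mathbb R$. So Proposition~\ref{S_a} gives you nothing about these product fibres, and the dichotomy ``each fibre lies in one of the two pieces'' is unsupported. The auxiliary argument about $B_1,B_2$ inherits this gap, and also mishandles the statement of Proposition~\ref{zericinesi} (it promotes vanishing at \emph{two} points of an arranged spherical set, and $\mathbb{S}_b\cap\mathbb C_K$ is a two-point set rather than a sphere on which density is meaningful). Finally, the citation of Proposition~\ref{corr}(3) to justify $\mathbb{S}_a\times\mathbb{S}_b=\mathcal V(I_a+I_b)$ is off-target, since that item concerns $\mathcal V_c$, not $\mathcal V$ (though the analogous fact for $\mathcal V$ is trivially true and the paper obtains the representation directly from Propositions 3.4 and 3.7 of~\cite{Nul2}).

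The paper's proof sidesteps the fibrewise analysis entirely: it passes to $\mathcal V_c(I)$, observes that the commuting points of $\mathbb{S}_a\times\mathbb{S}_b$ form exactly the union of the two genuine arranged spherical sets $\mathbb S_{(a,\tilde b)}\cup\mathbb S_{(\bar a,\tilde b)}$, uses the irreducibility of those (Proposition~\ref{S_a}) to pin down $\mathcal V_c(I_1)$ and $\mathcal V_c(I_2)$, and then uses Proposition~\ref{GeneratoriSfere} (arranged spherical sets are cut out by two-sided radical ideals) to conclude $\mathcal V(I_j)=\mathcal V_c(I_j)$ for $j=1,2$, which forces $\mathbb{S}_a\times\mathbb{S}_b$ to equal a union of two spheres --- a contradiction. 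If you want to salvage a fibrewise argument, you would need an independent proof that $\mathbb{S}_a\times\{w\}$ is irreducible as a slice algebraic set, which is a nontrivial claim not covered by the propositions you cite.
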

\begin{proof}
Combining Propositions 3.4 and 3.7 in \cite{Nul2}, we have that $\mathbb{S}_{a}\times\mathbb{S}_{b}=\mathcal V(I)$ where $I$ is the (radical) ideal generated by $(q_1-a)^s,(q_2-b)^s$.
	Suppose {by contradiction} that $\mathcal V(I)=\mathcal V(I_1)\cup \mathcal V(I_2)$ is reducible. Thanks to Proposition \ref{vrad} we can assume that $I_1$ and $I_2$ are radical ideals. For the points with commuting components we {then} have
	\[\mathcal V_c(I)=\mathcal V_c(I_1)\cup \mathcal V_c(I_2)=\mathbb S_{(a,\tilde{b})}\cup \mathbb S_{(\bar{a},\tilde{b})}\]
	where $a\tilde{b}=\tilde{b}a$.  
	Since 	$\mathbb S_{(a,\tilde{b})}$ and $\mathbb S_{(\bar{a},\tilde{b})}$ are irreducible, up to swapping the subsets, we get that
	$\mathcal V_c(I_1)=\mathbb S_{(a,\tilde{b})}$ and $\mathcal V_c(I_2)=\mathbb S_{(\bar{a},\tilde{b})}$. Thus, recalling Proposition \ref{GeneratoriSfere}, $I_1$ and $I_2$ are such that $\mathcal V(I_1)=\mathcal V_c(I_1)$ and $\mathcal V(I_2)=\mathcal V_c(I_2)$. 
Therefore
	\[\mathbb S_a \times \mathbb S_b=\mathcal V(I_1)\cup \mathcal V(I_2)=\mathcal V_c(I_1)\cup \mathcal V_c(I_2)=\mathbb S_{({a},\tilde{b})}\cup \mathbb S_{(\bar{a},\tilde{b})},\]
	which is a contradiction.
\end{proof}
﻿
\noindent The previous proposition easily generalizes to any subset of the form $\mathbb{S}_{(a_1, \ldots,a_k)}\times \mathbb S_{(b_{k+1},\ldots,b_n)}$ in $\mathbb H^n$.\\
﻿
\noindent Let us now give a couple of examples of reducible slice algebraic sets. 
\begin{example}\label{red1}
	The subset of $\mathbb{H}^2$ given by  $V=\{(i,i)\}\cup \{(2j,2j)\}$ is a reducible slice algebraic set. 
	\noindent
	Indeed $V=\mathcal{V}(I)$ where $I=\langle q_1-q_2, (q_1-i)*(q_2-2\tilde{J})\rangle$.
	\noindent This can be seen  by considering a polynomial $P$ that vanishes on $V$; if we divide $P$  by $-q_2+q_1$ (as a monic polynomial in the variable $q_2$), we obtain a rest $R(q_1)$ which vanishes again on $V$, that is for $q_1=i$ and $q_1=2j$. 
	Recalling Proposition 3.21 in \cite{libroGSS}, 
	 $$R(q_1)=(q_1-i)*(q_1-2\tilde{J})*R_1(q_1)$$ where $\tilde{J}=(2j-i)^{-1}j(2j-i)=\frac{-4i+3j}{5}$.   
	 Thus $P$ can be written as a combination  of $q_1-q_2$ and  $(q_1-i)*(q_2-2\tilde{
		J})$. \\
		Therefore we have that $V=\mathcal{V}(I)=\mathcal{V}(I_1) \ \cup \ \mathcal{V}(I_2)$ is reducible,  with $I_1=\langle (q_1-q_2), (q_1-i)\rangle$ and $I_2=\langle (q_1-q_2), (q_2-2j)\rangle$.
\end{example}
﻿
\begin{example} \label{red2}  
	Let $I=\langle q_1^2+1,(q_1-q_2)*(q_2-i)\rangle$. Then 
	 $$\mathcal{V}(I)=(\mathbb{S}\times\mathbb{H})\cap  (\{q_1=q_2\}\cup (\mathbb{H}\times \{i\}))$$
	is reducible since $\mathcal{V}(I)=\mathcal{V}(I_1)\cup \mathcal{V}(I_2)$ where $I_1=\langle q_1^2+1,q_1-q_2\rangle$ and $I_2=\langle q_1^2+1,q_2-i\rangle $. 
	Indeed
	$\mathcal{V}(I_1) =\mathbb{S}_{(i.i)}=(\mathbb{S}\times\mathbb{H})\cap  \{q_1=q_2\}$ and $\mathcal{V}(I_2)=\mathbb{S}_i\times \{i\}=(\mathbb{S}\times\mathbb{H})\cap   (\mathbb{H}\times \{i\})$. 
\end{example}
﻿
\noindent {﻿Let us now describe the relation between the reducibility of a slice algebraic set and the reducibility of its points with commuting components. 
}\begin{proposition}\label{vc-v}
{Let $I$ be a right ideal of $\mathbb H[q_1,\ldots,q_n]$.}
If $\mathcal{V}_c(I)$ is irreducible, then $\mathcal{V}(I)$ is irreducible.
\end{proposition}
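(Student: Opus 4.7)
My plan is to prove the contrapositive: assume $\mathcal{V}(I)$ is reducible and deduce that $\mathcal{V}_c(I)$ is reducible. So suppose there exist proper, non-trivial right ideals $I_1, I_2 \subseteq \mathbb{H}[q_1,\ldots,q_n]$, not contained one into another, with
\[
\mathcal{V}(I) = \mathcal{V}(I_1) \cup \mathcal{V}(I_2).
\]

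The key observation is that $\mathcal{V}_c$ is obtained from $\mathcal{V}$ by intersecting with the fixed set $\bigcup_{K \in \mathbb{S}} \mathbb{C}_K^n$, and intersection with a fixed set distributes over finite unions. Concretely, setting $A := \bigcup_{K \in \mathbb{S}} \mathbb{C}_K^n$, I would write
\[
\mathcal{V}_c(I) = \mathcal{V}(I) \cap A = \bigl(\mathcal{V}(I_1) \cup \mathcal{V}(I_2)\bigr) \cap A = \bigl(\mathcal{V}(I_1) \cap A\bigr) \cup \bigl(\mathcal{V}(I_2) \cap A\bigr) = \mathcal{V}_c(I_1) \cup \mathcal{V}_c(I_2).
\]

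Finally, I would observe that the same ideals $I_1, I_2$ witness the reducibility of $\mathcal{V}_c(I)$: by Definition \ref{red}, the conditions required of the decomposing ideals (that they be proper, non-trivial, and not contained one into another) are conditions on the ideals themselves, not on their associated vanishing sets, and these conditions are preserved verbatim. Hence $\mathcal{V}_c(I)$ is reducible, completing the contrapositive.

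There is no real obstacle here; the proof is a one-line consequence of the fact that the reducibility condition is phrased in terms of the ideals (which are inherited) rather than in terms of proper containment of the component slice algebraic sets. The subtlety one should note, and which is consistent with the paper's stated assertion that the converse fails, is that we do \emph{not} claim $\mathcal{V}_c(I_1) \subsetneq \mathcal{V}_c(I)$: even though $\mathcal{V}(I_1)$ is a proper subset of $\mathcal{V}(I)$, passing to commuting-component points could in principle collapse the decomposition, but this does not affect reducibility as defined here.
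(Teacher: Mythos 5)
Your argument hinges on a literal reading of Definition \ref{red} in which ``not contained one into another'' is a condition on the ideals $I_1,I_2$ themselves, and you explicitly dismiss the possibility that passing to commuting-component points could collapse the decomposition. That dismissal is exactly where the gap is. Throughout the paper (see the proofs of Propositions \ref{S_a}, \ref{SxS} and of Theorems \ref{nosfere}, \ref{cpirred}), irreducibility is used in the standard geometric sense: the decomposition $\mathcal{V}_c(I)=\mathcal{V}_c(I_1)\cup\mathcal{V}_c(I_2)$ is non-trivial precisely when $\mathcal{V}_c(I_1)\not\subseteq\mathcal{V}_c(I_2)$ and $\mathcal{V}_c(I_2)\not\subseteq\mathcal{V}_c(I_1)$ (equivalently, the radicals $\sqrt{I_1}$ and $\sqrt{I_2}$ are incomparable). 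Under that reading, intersecting with $A=\bigcup_{K\in\mathbb{S}}\mathbb{C}_K^n$ does give $\mathcal{V}_c(I)=\mathcal{V}_c(I_1)\cup\mathcal{V}_c(I_2)$, but by itself tells you nothing about whether one of $\mathcal{V}_c(I_1),\mathcal{V}_c(I_2)$ is contained in the other, so you have not produced a witness of reducibility of $\mathcal{V}_c(I)$.

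The missing idea is supplied by the Strong Nullstellensatz together with Proposition \ref{vanishing}. The paper argues directly (not by contrapositive): given $\mathcal{V}(I)=\mathcal{V}(I_1)\cup\mathcal{V}(I_2)$ non-trivially, intersect with $A$ to get $\mathcal{V}_c(I)=\mathcal{V}_c(I_1)\cup\mathcal{V}_c(I_2)$, note both pieces are non-empty by the Weak Nullstellensatz, and then, since $\mathcal{V}_c(I)$ is assumed irreducible, deduce WLOG $\mathcal{V}_c(I_1)\subseteq\mathcal{V}_c(I_2)$. Because $\mathcal{J}(\mathcal{V}_c(I_k))=\mathcal{J}(\mathcal{V}(I_k))=\sqrt{I_k}$, this containment forces $\sqrt{I_1}\supseteq\sqrt{I_2}$, hence $\mathcal{V}(I_1)\subseteq\mathcal{V}(I_2)$, contradicting the non-triviality of the original decomposition. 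This step — lifting a containment among the commuting-component loci back to a containment among the full vanishing sets — is precisely what your one-line argument skips, and it is also the step that cannot be reversed, which is why the converse of the proposition fails (as the paper's subsequent remark illustrates with $\mathbb{S}_i\times\mathbb{S}_i$).
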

\begin{proof}
	Suppose by contradiction that $\mathcal V(I)=\mathcal{V}(I_1)\cup \mathcal{V}(I_2)$ is reducible. Then 
	$\mathcal{V}_c(I)=\mathcal{V}_c(I_1)\cup \mathcal{V}_c(I_2)$, where $\mathcal V_c(I_1)$ and $\mathcal V_c(I_2)$ are not empty thanks to the Weak Nullstellensatz (see \cite[Theorem 1.1]{israeliani-1}). {Since $\mathcal{V}_c(I)$ is irreducible, we can suppose without loss of generality that $\mathcal{V}_c(I_1)\subseteq \mathcal{V}_c(I_2)$.} Recalling the Strong Nullstellensatz \ref{snull}, we then have
	
	\[\sqrt{I_1}=\mathcal J(\mathcal V(I_1))=\mathcal J(\mathcal V_c(I_1)) \supseteq \mathcal J(\mathcal V_c(I_2))=\mathcal J(\mathcal V(I_2))=\sqrt{I_2},\]
which implies that
\[\mathcal V(I_1)=\mathcal V(\sqrt{I_1})\subseteq \mathcal V(\sqrt{I_2})=\mathcal V({I_2}),\]
in contradiction with the reducibility of $\mathcal V(I)$.
\end{proof}
﻿{
\begin{remark}
	The converse of the previous proposition is not true in general. Consider for instance $\mathbb{S}_i\times \mathbb{S}_i$, which is an irreducible slice algebraic set of the form $\mathcal{V}(I)$ with $I=\langle(q_1-i)^s,(q_2-i)^s\rangle$, thanks to  Proposition \ref{SxS}. In this case 
	$\mathcal{V}_c(I)=\mathbb S_{(i,i)}\cup \mathbb S_{(-i,i)}$, which is a reducible slice algebraic set, thanks to Proposition \ref{S_a}.
\end{remark}}
%
﻿
﻿
%
%
﻿
﻿
﻿
\noindent Irreducible complex algebraic sets are characterized by the fact that their associated ideals are {\em prime}. From the quaternionic Strong Nullstellesatz \ref{snull}, one might think that {\em completely prime} ideals could play the analogue role in our setting. 
This is not the case, as shown by the ideal $I$ generated by $q^2+1$ in $\mathbb H[q]$: {as already recalled in the Introduction, $I$ is not completely prime, while its associated slice algebraic set is $\mathbb S_i$ which is irreducible.}
﻿
\noindent {Motivated by this gap in the correspondence}, we introduce a new class of ideals, related to the geometry of the zero sets of slice regular polynomials.    
﻿
\begin{definition}\label{qpideal} A right ideal $I\subseteq  \mathbb H[q_1,\ldots,q_n]$ is said to be  {\em quasi prime} if and only if given two polynomials $P,Q$ such that 
$P*Q\in I$, 
then  either $P\in I$ or $Q^s\in I$.
 \end{definition}
﻿
\noindent The class of quasi prime ideals contains the one of completely prime ideals.
 
\begin{proposition} \label{cp-qp}
If $I\subseteq  \mathbb H[q_1,\ldots,q_n]$ is a completely prime right ideal, then $I$ is a quasi prime ideal. 
\end{proposition}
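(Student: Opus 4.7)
The plan is to deduce quasi primeness from complete primeness by using the symmetrization $Q^s$ as an auxiliary element whose centrality removes the troublesome hypothesis $P*I\subseteq I$ in Definition \ref{cprime}.

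First, given $P, Q \in \mathbb{H}[q_1,\ldots,q_n]$ with $P*Q \in I$, I would use the fact that $I$ is a right ideal to multiply on the right by $Q^c$, obtaining $P*Q*Q^c = P*Q^s \in I$. The next key step is the observation that $Q^s$ has real coefficients (this is the multivariate analog of the standard one-variable property; it can be verified by inspecting the Splitting Lemma \ref{splitting} and computing $P*P^c$ coefficient by coefficient, or by referencing a known computation from \cite{Nul2}). Because its coefficients are central in $\mathbb{H}$, the slice product with $Q^s$ is commutative: $R * Q^s = Q^s * R$ for every $R \in \mathbb{H}[q_1,\ldots,q_n]$.

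With this in hand, I would rewrite $P*Q^s \in I$ as $Q^s * P \in I$, and then check the side hypothesis needed to apply complete primeness to the pair $(Q^s, P)$. Centrality of $Q^s$ gives $Q^s * I = I * Q^s$, and since $I$ is a right ideal the latter is contained in $I$. Therefore $Q^s * I \subseteq I$, and Definition \ref{cprime} applied to $Q^s * P \in I$ yields $Q^s \in I$ or $P \in I$, which is precisely the conclusion required by Definition \ref{qpideal}.

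The only non-routine point is the verification that $Q^s$ has real coefficients (and hence is central for the $*$-product) in the several variable setting. I would expect this to be the main obstacle, but it should reduce to a direct coefficient computation: writing $Q = \sum q_1^{\ell_1}\cdots q_n^{\ell_n} a_{\ell_1,\ldots,\ell_n}$ and $Q^c$ with the conjugate coefficients, the coefficient of $q_1^{n_1}\cdots q_n^{n_n}$ in $Q*Q^c$ is a sum of terms of the form $a_{\boldsymbol{\ell}} \overline{a_{\boldsymbol{\ell}'}} + a_{\boldsymbol{\ell}'} \overline{a_{\boldsymbol{\ell}}}$, each of which is real. Once this is established, the rest of the argument is an immediate application of Definition \ref{cprime}.
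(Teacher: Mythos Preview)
Your argument is correct and is essentially identical to the paper's: both multiply $P*Q$ on the right by $Q^c$, use the centrality of $Q^s$ to rewrite $P*Q^s=Q^s*P$ and to verify $Q^s*I=I*Q^s\subseteq I$, and then invoke Definition~\ref{cprime} for the pair $(Q^s,P)$. The paper states the centrality of $Q^s$ without justification, while you spell out the coefficient computation showing $Q^s\in\mathbb{R}[q_1,\ldots,q_n]$; otherwise the proofs coincide.
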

\begin{proof} Let $P,Q$ be {slice regular polynomials }such that $P*Q\in I$. Then $P*Q*Q^c=P*Q^s=Q^s*P\in I$, moreover $Q^s*I=I*Q^s\subseteq I$. The fact that $I$ is completely prime implies that either $P\in I$ or $Q^s\in I$.
\end{proof}
﻿
﻿
﻿
﻿
\noindent 
From the irreducibility of a slice algebraic set we can deduce some algebraic properties of the corresponding ideal. 
\begin{theorem}\label{VirredIprime} Let $I$ be a radical ideal of $\mathbb{H}[q_1,\ldots,q_n]$ such that $\mathcal{V}_c(I)$ is irreducible. Then $I$ is a quasi prime ideal.
 \end{theorem}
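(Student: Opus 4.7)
The plan is to argue by contradiction: suppose $P*Q \in I$ while $P \notin I$ and $Q^s \notin I$, and build a reducibility decomposition of $\mathcal{V}_c(I)$. Since $I$ is radical, the Strong Nullstellensatz (Theorem \ref{snull}) together with Proposition \ref{vanishing} give $I = \mathcal{J}(\mathcal{V}_c(I))$, so $I$ is exactly the set of polynomials vanishing on $\mathcal{V}_c(I)$. In particular the hypothesis $P \notin I$ yields some $\mathbf{b} \in \mathcal{V}_c(I)$ with $P(\mathbf{b}) \neq 0$, and the hypothesis $Q^s \notin I$ yields some $\mathbf{c} \in \mathcal{V}_c(I)$ with $Q^s(\mathbf{c}) \neq 0$.

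I would then set $I_1 := I + \langle P\rangle$ and $I_2 := I + \langle Q^s\rangle$ and let $V_j := \mathcal{V}_c(I_j)$. By Proposition \ref{corr}(3) one has $V_1 = \mathcal{V}_c(I) \cap Z_P$ and $V_2 = \mathcal{V}_c(I) \cap Z_{Q^s}$. The key step is to verify $\mathcal{V}_c(I) = V_1 \cup V_2$. Fix $\mathbf{a} \in \mathcal{V}_c(I) \subseteq \mathbb{C}_K^n$ for some $K\in\mathbb{S}$; then $(P*Q)(\mathbf{a})=0$. By Proposition \ref{prodstar}, either $P(\mathbf{a})=0$, in which case $\mathbf{a}\in V_1$, or $P(\mathbf{a})\neq 0$ and $Q$ vanishes at the conjugate point $P(\mathbf{a})^{-1}\mathbf{a}P(\mathbf{a}) \in \mathbb{S}_\mathbf{a}$. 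In the second case Proposition \ref{corrsimm} forces $Q^s$ to vanish on the entire arranged spherical set $\mathbb{S}_\mathbf{a}$, hence at $\mathbf{a}$ itself, so $\mathbf{a}\in V_2$.

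To finish, I would check that this decomposition violates Definition \ref{red}. The ideals $I_1, I_2$ are non-zero since $P, Q^s \neq 0$, and they are proper: if $V_1$ were empty, then $\mathcal{V}_c(I)=V_2\subseteq Z_{Q^s}$ would imply $Q^s \in \mathcal{J}(\mathcal{V}_c(I))=I$, contradicting hypothesis, so $V_1\neq\emptyset$, and the Weak Nullstellensatz (\cite[Theorem 1.1]{israeliani-1}) yields that $I_1$ is proper; the argument for $I_2$ is symmetric. Neither ideal is contained in the other: if $I_1 \subseteq I_2$ then $V_2 \subseteq V_1$ would force $\mathcal{V}_c(I)=V_1$, contradicting $\mathbf{b}\in \mathcal{V}_c(I)\setminus V_1$, and reversing the roles of $\mathbf{b}$ and $\mathbf{c}$ rules out $I_2\subseteq I_1$. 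This contradicts the irreducibility of $\mathcal{V}_c(I)$.

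The substantive step is the decomposition $\mathcal{V}_c(I)=V_1\cup V_2$, and specifically the passage from a zero of $Q$ at a conjugated point of $\mathbb{S}_\mathbf{a}$ to a zero of $Q^s$ at $\mathbf{a}$. This is exactly why the right notion here is quasi prime rather than completely prime: Proposition \ref{prodstar} only produces a zero of $Q$ at some rotated representative of $\mathbb{S}_\mathbf{a}$, and only the symmetrization $Q^s$ inherits (via Proposition \ref{corrsimm}) the spherical symmetry needed to propagate that zero back to $\mathbf{a}$ itself.
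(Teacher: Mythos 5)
Your proposal is correct and takes essentially the same approach as the paper: both establish the decomposition $\mathcal{V}_c(I) = \mathcal{V}_c(I+\langle P\rangle)\cup\mathcal{V}_c(I+\langle Q^s\rangle)$ via Proposition \ref{prodstar} and the spherical propagation of zeros of $Q^s$, then invoke irreducibility and radicality to conclude $P\in I$ or $Q^s\in I$. The only (cosmetic) difference is that you phrase it as a proof by contradiction and spell out the non-triviality and non-containment checks from Definition \ref{red}, whereas the paper argues directly.
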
 
%
\begin{proof} 
	Suppose  $P*Q\in I$, with $Q$ a non-constant polynomial. We have that $P*Q$ vanishes on  $\mathcal{V}(I)$, and thus it vanishes also on $\mathcal{V}_c(I).$ On this set $P*Q$ has a manageable expression. Indeed for all $\textbf{\em a}=(a_1,a_2\cdots,a_n)\in \mathcal{V}_c(I)$, either $P(\textbf {\em a})=0$, or
		 $$(P*Q)(\textbf{\em a})=P(\textbf{\em a})\cdot Q({P}^{-1}(\textbf{\em a})\textbf {\em a}P(\textbf{\em a})),$$
	that is $Q({P}^{-1}(\textbf{\em a})\textbf{\em a}P(\textbf{\em
		a}))=0$. 
		Take now the two subsets of $\mathcal{V}_c(I),$ given
	by $$V_1=\mathcal{V}(I)\cap \mathcal{V}_c(\langle P\rangle)=\mathcal{V}_c(I+\langle P\rangle)\subseteq \mathcal{V}_c(\langle P\rangle)$$
	and
	$$  V_2=\mathcal{V}(I)\cap \mathbb{S}_{\mathcal{V}_c(\langle
		Q\rangle)}=
	\mathcal{V}_c{(I)}\cap\mathcal{V}_c(\langle
	{Q^s}\rangle)=\mathcal{V}_c(I+\langle Q^s\rangle)\subseteq
	\mathcal{V}_c(\langle {Q^s}\rangle).$$
	Now  either ${\textbf{\em a}}$ belongs to $V_1$ or one of
	its conjugates belongs to $V_c(\langle Q\rangle)$, thus ${\textbf{\em a}}$
	belongs to $V_2$; therefore $$\mathcal{V}_c(I)= V_1\cup {V_2}.$$ 
	Since $\mathcal{V}_c(I)$ is irreducible,{ the first possibility is that
	$$\mathcal{V}_c(I)=V_1\subseteq \mathcal{V}_c(\langle P\rangle).$$ 
	In this case $P$ vanishes on $\mathcal{V}_c(I),$ and hence $P\in \mathcal{J}(\mathcal{V}_c(I))=I$ since $I$ is radical; the second possibility is that $\mathcal{V}_c(I)=V_2\subseteq \mathcal{V}_c(\langle {Q^s}\rangle)$. In this case $Q^s\in  \mathcal{J}(\mathcal{V}_c(I))=I$, since $I$ is radical}.
\end{proof}
﻿
﻿
﻿
\noindent {The first result in the other direction is the following.}
﻿
 \begin{theorem} \label{nosfere}Let $I$ be a quasi prime and radical ideal of $\mathbb{H}[q_1,\ldots,q_n]$.
Then $\mathcal{V}_c(I)$ cannot be written as a non-trivial union of the form $\mathcal{V}_c(I_1)\cup \mathcal{V}_c(I_2)$ where  either $\mathcal{V}_c(I_1)$ or $\mathcal{V}_c(I_2)$ has spherical symmetry.
%
\end{theorem}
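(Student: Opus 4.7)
The plan is to argue by contradiction using the quasi prime hypothesis on $I$, with the spherical symmetry of one of the two pieces as the decisive geometric ingredient. Suppose $\mathcal V_c(I)=\mathcal V_c(I_1)\cup\mathcal V_c(I_2)$ is a non-trivial decomposition with, say, $\mathcal V_c(I_2)$ spherically symmetric. By Proposition \ref{vrad} I may replace $I_1,I_2$ by their radicals, so they become radical right ideals with $I_j=\mathcal J(\mathcal V_c(I_j))$; combined with \eqref{Jcap} and the Strong Nullstellensatz this gives $I=I_1\cap I_2$, while non-triviality yields the strict inclusions $I\subsetneq I_1$ and $I\subsetneq I_2$.

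The core technical step is to produce a polynomial $Q\in I_2$ whose symmetrization $Q^s$ lies outside $I$. Pick $\textbf{\em c}\in\mathcal V_c(I_1)\setminus\mathcal V_c(I_2)$. Since $\mathcal V_c(I_2)=\mathbb S_{\mathcal V_c(I_2)}$ is a disjoint union of spheres, the whole sphere $\mathbb S_{\textbf{\em c}}$ is disjoint from $\mathcal V_c(I_2)$; moreover $\mathbb S_{\textbf{\em c}}$ is a slice algebraic set (as in Proposition \ref{GeneratoriSfere}), so $\mathcal V_c(\mathcal J(\mathbb S_{\textbf{\em c}}))=\mathbb S_{\textbf{\em c}}$. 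Proposition \ref{corr}(3) then gives $\mathcal V_c(I_2+\mathcal J(\mathbb S_{\textbf{\em c}}))=\mathcal V_c(I_2)\cap\mathbb S_{\textbf{\em c}}=\varnothing$, and the Weak Nullstellensatz \cite[Theorem~1.1]{israeliani-1} yields $1\in I_2+\mathcal J(\mathbb S_{\textbf{\em c}})$. Writing $1=Q+R$ with $Q\in I_2$ and $R$ vanishing on $\mathbb S_{\textbf{\em c}}$, I see that $Q\equiv 1$ on $\mathbb S_{\textbf{\em c}}$; in particular $Q$ has no zero on $\mathbb S_{\textbf{\em c}}$, so by Proposition \ref{corrsimm} neither does $Q^s$. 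Hence $Q^s(\textbf{\em c})\neq 0$, giving $Q^s\notin\mathcal J(\mathcal V_c(I_1))=I_1\supseteq I$.

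Finally pick any $P\in I_1\setminus I$, which exists because $I\subsetneq I_1$. For $\textbf{\em a}\in\mathcal V_c(I_1)$, $P(\textbf{\em a})=0$ and Proposition \ref{prodstar} forces $(P*Q)(\textbf{\em a})=0$; for $\textbf{\em a}\in\mathcal V_c(I_2)$, either $P(\textbf{\em a})=0$ again, or Proposition \ref{prodstar} gives $(P*Q)(\textbf{\em a})=P(\textbf{\em a})\,Q\bigl(P(\textbf{\em a})^{-1}\textbf{\em a}\,P(\textbf{\em a})\bigr)$, and the argument of $Q$ lies in $\mathbb S_{\textbf{\em a}}\subseteq\mathcal V_c(I_2)$ by spherical symmetry, so $Q$ vanishes there. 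Thus $P*Q$ vanishes on all of $\mathcal V_c(I)$, i.e.\ $P*Q\in\mathcal J(\mathcal V_c(I))=I$. The quasi prime hypothesis then forces $P\in I$ or $Q^s\in I$, both ruled out by construction; contradiction. The main obstacle is ensuring $Q^s\notin I$: a naive choice $Q\in I_2\setminus I$ could have $Q^s\in I$, and it is exactly the spherical symmetry of $\mathcal V_c(I_2)$ that separates $\mathbb S_{\textbf{\em c}}$ from $\mathcal V_c(I_2)$ and lets the Weak Nullstellensatz supply a $Q$ identically $1$ on $\mathbb S_{\textbf{\em c}}$.
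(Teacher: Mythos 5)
Your proof is correct, and it shares the paper's central step: spherical symmetry of $\mathcal V_c(I_2)$ makes $P*Q$ vanish on all of $\mathcal V_c(I)$ when $P$ vanishes on $\mathcal V_c(I_1)$ and $Q$ vanishes on $\mathcal V_c(I_2)$, so $P*Q\in\mathcal J(\mathcal V_c(I))=\sqrt I=I$ and quasi primeness kicks in. Where you diverge is in how you finish. The paper keeps the second factor generic: it takes \emph{any} $K\in\mathcal J(\mathcal V_c(I_2))$, concludes $K^s\in I$ for all such $K$, and then uses $\mathcal V_c(I)\subseteq\bigcap_K\mathcal V_c(\langle K^s\rangle)=\mathcal V_c(\mathcal S(I_2))=\mathcal V_c(I_2)$ to show the decomposition was trivial. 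You instead invoke the Weak Nullstellensatz to manufacture one explicit $Q\in I_2$ equal to $1$ on a sphere $\mathbb S_{\textbf{\em c}}\subseteq\mathcal V_c(I_1)\setminus\mathcal V_c(I_2)$, then use Proposition~\ref{corrsimm} to see $Q^s(\textbf{\em c})\neq0$, hence $Q^s\notin\mathcal J(\mathcal V_c(I))=I$, giving an immediate violation of quasi primeness. Both are valid. The paper's route avoids the Weak Nullstellensatz entirely and has the side benefit of exhibiting the triviality of the decomposition directly ($\mathcal V_c(I)=\mathcal V_c(I_2)$); your route requires one extra tool (the Weak Nullstellensatz and the fact that $\mathbb S_{\textbf{\em c}}$ is slice algebraic with $\mathcal V_c(\mathcal J(\mathbb S_{\textbf{\em c}}))=\mathbb S_{\textbf{\em c}}$) but is more constructive, exhibiting a concrete pair $(P,Q)$ that contradicts the quasi-prime condition. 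Your observation that it is precisely the spherical symmetry that separates $\mathbb S_{\textbf{\em c}}$ from $\mathcal V_c(I_2)$ and makes the Weak Nullstellensatz deliver such a $Q$ is a good piece of motivation that the paper's more abstract quantified argument does not surface.
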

﻿
﻿
\begin{proof}
	 Suppose, by contradiction, that $\mathcal{V}_c(I)$ can be written as  $$\mathcal{V}_c(I)=\mathcal{V}_c(I_1)\cup \mathcal{V}_c(I_2)$$ where $\mathcal{V}_c(I_2)$ has spherical symmetry. {Suppose that there exists} $H\in \mathcal{J}(\mathcal{V}_c(I_1))\setminus{I}$  and take any $K\in  \mathcal{J}(\mathcal{V}_c(I_2))$.
	Consider the polynomial $H*K$, and take a point $\textbf{\em a}\in \mathcal{V}_c(I)$. {If $\textbf{\em a}\in \mathcal{V}_c(I_1)$, then  $H(\textbf{\em a})=0$ and hence $H*K(\textbf{\em a})=0$ as well; 
if,	otherwise, $\textbf{\em a}\in \mathcal{V}_c(I_2)$, then $\mathbb{S}_{\textbf{\em a}}\subseteq \mathcal{V}_c(I_2)$ since $\mathcal{V}_c(I_2)=\mathcal{V}_c( \mathcal{S}(I_2))$ and hence
$$H*K(\textbf{\em a})=H(\textbf{\em a})K(H^{-1}(\textbf{\em a})\textbf{\em a}H(\textbf{\em a}))=0.$$
	Thus $H*K\in \mathcal{J}(\mathcal{V}_c(I))=\sqrt{I}=I$. Recalling that $I$ is quasi prime and that $H\notin I$ we necessarily have that $K^s\in I$. 
	Since $K$ was chosen arbitrarily in $\mathcal{J}(\mathcal{V}_c(I_2))$, which contains $I_2$, we get that
	$$\mathcal{V}_c(I_2)\subseteq  \mathcal{V}_c(I) \subseteq \bigcap_{K\in I_2}{\mathcal{V}_c(\langle K^s\rangle)}= \mathcal{V}_c( \mathcal{S}(I_2))$$ 
	
\noindent Since by hypothesis  $\mathcal{V}_c(I_2)=\mathcal{V}_c( \mathcal{S}(I_2)),$ we have that $\mathcal{V}_c(I)=\mathcal{V}_c(I_2)$ and hence we conclude that $\mathcal{V}_c(I)$ is irreducible. }
	
\end{proof}
{\noindent As a consequence of the previous theorem we have the following.
{\begin{corollary}\label{coro}
Let $I$ be a quasi prime and radical right ideal of $\mathbb H[q_1,\ldots,q_n]$ such that $\mathcal{V}_c(I)$ has spherical symmetry. Then $\mathcal V_c(I)$ is irreducible.
\end{corollary}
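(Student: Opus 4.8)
The plan is to deduce the statement from Theorem~\ref{nosfere} by showing that a \emph{reducible} spherically symmetric set of the form $\mathcal{V}_c(I)$ always admits a reduction into two pieces that \emph{both} have spherical symmetry, which Theorem~\ref{nosfere} rules out. So I would argue by contradiction: suppose $\mathcal{V}_c(I)=\mathcal{V}_c(I_1)\cup\mathcal{V}_c(I_2)$ with $\mathcal{V}_c(I_1)$ and $\mathcal{V}_c(I_2)$ neither contained in the other, hence both properly contained in $\mathcal{V}_c(I)$.

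The first step is a ``Chinese remainder'' type observation: since $\mathcal{V}_c(I)$ has spherical symmetry it equals the union of the arranged spherical sets it contains, and I claim each such $\mathbb{S}_{\textbf{\em a}}\subseteq\mathcal{V}_c(I)$ lies entirely inside $\mathcal{V}_c(I_1)$ or entirely inside $\mathcal{V}_c(I_2)$. Indeed $\mathbb{S}_{\textbf{\em a}}$ is covered by $(\mathbb{S}_{\textbf{\em a}}\cap\mathcal{V}_c(I_1))\cup(\mathbb{S}_{\textbf{\em a}}\cap\mathcal{V}_c(I_2))$; if $\textbf{\em a}\notin\mathbb{R}^n$ then $\mathbb{S}_{\textbf{\em a}}$ is infinite, so one of these two intersections contains two distinct points, and Proposition~\ref{zericinesi} then forces every polynomial of the corresponding $I_j$ to vanish on all of $\mathbb{S}_{\textbf{\em a}}$, i.e. $\mathbb{S}_{\textbf{\em a}}\subseteq\mathcal{V}_c(I_j)$; the case $\textbf{\em a}\in\mathbb{R}^n$ is trivial. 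I would then set $W_j:=\{\textbf{\em a}\in\mathcal{V}_c(I_j):\mathbb{S}_{\textbf{\em a}}\subseteq\mathcal{V}_c(I_j)\}$, the largest spherically symmetric subset of $\mathcal{V}_c(I_j)$; by the claim $\mathcal{V}_c(I)=W_1\cup W_2$, and each $W_j\subseteq\mathcal{V}_c(I_j)\subsetneq\mathcal{V}_c(I)$.

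The point I expect to be the main obstacle is to verify that each $W_j$ is itself a slice algebraic set, so that it can be written as $\mathcal{V}_c$ of a right ideal and Theorem~\ref{nosfere} becomes applicable. Here I would use Proposition~\ref{zericinesi} once more: for $K\in\mathbb{S}$ and $\textbf{\em b}\in\mathbb{C}_K^n$, applying it to the two points $\textbf{\em b},\overline{\textbf{\em b}}$ of $\mathbb{S}_{\textbf{\em b}}$ shows that $\mathbb{S}_{\textbf{\em b}}\subseteq\mathcal{V}_c(I_j)$ exactly when both $\textbf{\em b}$ and $\overline{\textbf{\em b}}$ belong to $\mathcal{V}_c(I_j)$; hence $W_j\cap\mathbb{C}_K^n=\bigl(\mathcal{V}_c(I_j)\cap\mathbb{C}_K^n\bigr)\cap\overline{\bigl(\mathcal{V}_c(I_j)\cap\mathbb{C}_K^n\bigr)}$, the intersection of a complex algebraic subset of $\mathbb{C}_K^n$ with its complex conjugate, which is again complex algebraic. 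Thus $W_j$ is slice algebraic, so $W_j=\mathcal{V}_c(J_j)$ with $J_j:=\mathcal{J}(W_j)$, and Theorem~\ref{simmalg} gives $\mathcal{V}_c(\mathcal{S}(J_j))=\mathbb{S}_{W_j}=W_j$, i.e. each $\mathcal{V}_c(J_j)$ has spherical symmetry.

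It then remains to check that $\mathcal{V}_c(I)=\mathcal{V}_c(J_1)\cup\mathcal{V}_c(J_2)$ is a genuine reduction: each $W_j$ is properly contained in $\mathcal{V}_c(I)$ (because $W_j\subseteq\mathcal{V}_c(I_j)\subsetneq\mathcal{V}_c(I)$) and nonempty (otherwise $\mathcal{V}_c(I)=W_{3-j}\subsetneq\mathcal{V}_c(I)$), so $J_1$ and $J_2$ are proper non-trivial right ideals, and neither is contained in the other (an inclusion $J_1\subseteq J_2$ would give $W_2\subseteq W_1$, whence $W_1=\mathcal{V}_c(I)$, a contradiction). This exhibits $\mathcal{V}_c(I)$ as a non-trivial union $\mathcal{V}_c(J_1)\cup\mathcal{V}_c(J_2)$ in which $\mathcal{V}_c(J_1)$ has spherical symmetry, contradicting Theorem~\ref{nosfere} since $I$ is quasi prime and radical. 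Therefore $\mathcal{V}_c(I)$ must be irreducible.
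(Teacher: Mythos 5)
Your proof is correct and follows the same basic route as the paper's, namely to exhibit $\mathcal V_c(I)$ as a union of two strictly smaller slice algebraic sets at least one of which has spherical symmetry, and then invoke Theorem~\ref{nosfere}. The paper's argument, however, is a one-liner: from $\mathbb S_{\mathcal V_c(I_1)}\cup\mathbb S_{\mathcal V_c(I_2)}=\mathcal V_c(I_1)\cup\mathcal V_c(I_2)$ it asserts directly that $\mathcal V_c(I_1)$ and $\mathcal V_c(I_2)$ ``consist of the union of arranged spheres'' and hence both already have spherical symmetry. As you anticipated, that step is not automatic: a piece $\mathcal V_c(I_j)$ could contain a stray point $\textbf{\em a}$ whose arranged sphere $\mathbb S_{\textbf{\em a}}$ lands entirely in the other piece, so $\mathcal V_c(I_j)$ itself need not be spherically symmetric. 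Your remedy --- passing to the spherical cores $W_j$ and using Proposition~\ref{zericinesi} twice, first to get $\mathcal V_c(I)=W_1\cup W_2$ and then to identify $W_j\cap\mathbb C_K^n$ with $(\mathcal V_c(I_j)\cap\mathbb C_K^n)\cap\overline{(\mathcal V_c(I_j)\cap\mathbb C_K^n)}$ --- is exactly the right way to make the paper's step rigorous. One point deserves a further word: $W_j$ being slice algebraic does not by itself give $W_j=\mathcal V_c(\mathcal J(W_j))$, since not every slice algebraic set is of the form $\mathcal V_c(J)$ (e.g.\ $\mathbb S_i\setminus\{j\}$ in $\mathbb H$). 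Here it does hold because $W_j$ is spherically symmetric: for $\textbf{\em c}\in\mathbb C_K^n\setminus W_j$ one can pick, via the complex Nullstellensatz on the slice, a complex polynomial $F$ vanishing on the conjugation-invariant set $W_j\cap\mathbb C_K^n$ with $F(\textbf{\em c})\neq 0$ and $F(\overline{\textbf{\em c}})\neq 0$, and then $G(z)=F(z)\,\overline{F(\bar z)}$ has real coefficients, lies in $\mathcal J(W_j)$ by Proposition~\ref{zerisferici}, and does not vanish at $\textbf{\em c}$. Alternatively you can bypass this equality entirely: since $W_j=\mathbb S_{W_j}$, the ideal $J_j:=\mathcal J(W_j)$ is two-sided by Proposition~\ref{Zc}, so $\mathcal V_c(J_j)$ has spherical symmetry by Proposition~\ref{twosided}, and $W_j\subseteq\mathcal V_c(J_j)\subseteq\mathcal V_c(I_j)\subsetneq\mathcal V_c(I)$ still yields a non-trivial decomposition to which Theorem~\ref{nosfere} applies.
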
}
\begin{proof}
If $\mathcal V_c(I)=\mathbb S_{\mathcal V_c(I)}$ and $\mathcal V_c(I)=\mathcal V_c(I_1)\cup \mathcal V_c(I_2)$ is reducible, then $\mathbb S_{\mathcal V_c(I)}=\mathbb S_{\mathcal V_c(I_1)}\cup \mathbb S_{\mathcal V_c(I_2)}=\mathcal V_c(I_1)\cup \mathcal V_c(I_2)$. Therefore $\mathcal V_c(I_1)$ and $\mathcal V_c(I_2)$ consist of the union of arranged spheres. Then $\mathbb S_{\mathcal V_c(I_1)}=\mathcal V_c(I_1)$ and $\mathbb S_{\mathcal V_c(I_2)}=\mathcal V_c(I_2)$, in contradiction with Theorem \ref{nosfere}.
\end{proof}	
}
\noindent In view of Proposition \ref{equivalence}, Corollary \ref{coro} can be restated as follows
\begin{corollary}\label{coro2}
	If $I$ is a quasi prime and radical two-sided ideal of $\mathbb H[q_1,\ldots,q_n]$, then $\mathcal V_c(I)$ is irreducible.
\end{corollary}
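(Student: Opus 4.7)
The plan is to deduce this corollary as an immediate combination of Proposition \ref{equivalence} and Corollary \ref{coro}, both of which have just been established. The key observation is that the hypothesis ``radical and two-sided'' is precisely what is needed to feed the previous corollary, once one translates the two-sided condition into a geometric statement via Proposition \ref{equivalence}.

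First, I would observe that since $I$ is radical we have $\sqrt{I}=I$, and since $I$ is two-sided, $\sqrt{I}$ is a two-sided ideal of $\mathbb H[q_1,\ldots,q_n]$. By Proposition \ref{equivalence}, this is equivalent to the statement that $\mathcal V_c(I)$ has spherical symmetry, i.e. $\mathcal V_c(I)=\mathbb S_{\mathcal V_c(I)}=\mathcal V_c(\mathcal S(I))$.

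At this point, all the hypotheses of Corollary \ref{coro} are satisfied: $I$ is quasi prime, radical, and $\mathcal V_c(I)$ has spherical symmetry. Applying Corollary \ref{coro} directly yields that $\mathcal V_c(I)$ is irreducible, completing the proof.

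The argument is essentially a one-line chain of implications, so there is no substantial obstacle here; the work has already been done in proving Proposition \ref{equivalence} and Corollary \ref{coro}. The only thing worth emphasizing in the write-up is why being two-sided suffices rather than having to verify spherical symmetry directly: this is exactly the content of Proposition \ref{equivalence}, and it is what makes the two-sided assumption a natural and checkable replacement for the geometric hypothesis of the previous corollary.
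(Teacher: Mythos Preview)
Your proof is correct and matches the paper's approach exactly: the paper simply notes that, in view of Proposition \ref{equivalence}, Corollary \ref{coro} can be restated in this form, which is precisely the chain of implications you spell out.
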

﻿
\noindent 	Notice that if $\mathcal{V}_c(I)$ is irreducible and has spherical symmetry, then, either $\mathcal{V}_c(I)$ consists of a unique arranged sphere or it is the union of an infinite number of arranged spheres. {We will give an explicit example of this phenomenon at the end of Section \ref{Sprinc}.}  

\noindent 	Theorem \ref{nosfere} can be used as a tool to establish if an ideal is quasi prime, as shown by the following examples.
\begin{example}
	
The ideal $I=\langle q_1^2+1,q_2^2+1\rangle$ is not quasi prime. Indeed $\mathcal{V}(I)=\mathbb{S}\times \mathbb{S}$ is irreducible while  $\mathcal{V}_c(I)=\mathbb{S}_{(i,i)}\cup\mathbb{S}_{(-i,i)}$ is reducible.   Thanks to the Strong Nullstellensatz \ref{snull} we have that 
\[\sqrt{I}=\mathcal J(\mathbb{S}\times \mathbb{S})=\langle q_1^2+1,q_2^2+1\rangle=I,\]
that is $I$ is radical. Theorem \ref{nosfere} implies then that $I$ is not quasi prime.  
\end{example}
\begin{example}
	The ideal $I=\langle (q_1^2+1)*(q_2^2+1)\rangle$ is not quasi prime. In fact, as above, the Strong Nullstellensatz yields that $I$ is radical but $\mathcal{V}(I)=(\mathbb{S}\times\mathbb{H})\cup (\mathbb{H}\times\mathbb{S})$ and hence
	\[\mathcal{V}_c(I)=\bigcup_{\substack{x,y \in \mathbb R \\ I\in \mathbb S}}\mathbb{S}_{(I,x+yI)} \cup \bigcup_{\substack{x,y \in \mathbb R \\ I\in \mathbb S}}\mathbb{S}_{(x+yI,I)}.\]
	Thanks to Theorem \ref{nosfere}, the ideal $I$ is not quasi prime.
\end{example}
﻿
﻿
\noindent Another result in the direction of characterizing ideals whose corresponding algebraic set are irreducible is the following. 
{\begin{theorem}\label{cpirred}
		Let $I$ be a quasi prime and radical right ideal of $\mathbb H[q_1,\ldots,q_n]$ such that $\mathbb S_{\mathcal V_c(I)}$ is irreducible. Then $\mathcal V_c(I)$ is irreducible.
	\end{theorem}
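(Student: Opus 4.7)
I argue by contradiction: assume $\mathcal V_c(I)=\mathcal V_c(I_1)\cup\mathcal V_c(I_2)$ with proper radical right ideals $I_1,I_2$ neither contained in the other, and set $A_j:=\mathcal V_c(I_j)$, so that $I_j=\mathcal J(A_j)$ by Proposition \ref{vrad} together with the Strong Nullstellensatz. Symmetrizing via Theorem \ref{simmalg} gives $\mathbb S_{\mathcal V_c(I)}=\mathbb S_{A_1}\cup\mathbb S_{A_2}$; irreducibility of $\mathbb S_{\mathcal V_c(I)}$ forces one to contain the other, and after relabelling I may take $\mathbb S_{A_2}=\mathbb S_{\mathcal V_c(I)}$.

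The central step is the following. Choose $P\in I_1\setminus I$ and $Q\in I_2\setminus I$; these exist because the non-trivial reducibility forces $I\subsetneq I_j$. Since $(P^s)^c=(P^c)^c*P^c=P*P^c=P^s$, the symmetrization $P^s$ has real coefficients, so by Proposition \ref{zerisferici} it vanishes on an entire arranged sphere as soon as it has a zero there. Then $Q*P^s$ vanishes on $\mathcal V_c(I)$: on $A_1$, $P(\aaa)=0$ forces $P^s(\aaa)=0$ and hence $P^s\equiv 0$ on $\mathbb S_{\aaa}$, so Proposition \ref{prodstar} gives $(Q*P^s)(\aaa)=Q(\aaa)\,P^s(Q(\aaa)^{-1}\aaa Q(\aaa))=0$; on $A_2$, $Q(\aaa)=0$ yields $(Q*P^s)(\aaa)=0$ at once. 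Thus $Q*P^s\in\mathcal J(\mathcal V_c(I))=I$, and quasi-primality of $I$ produces $Q\in I$ (excluded) or $(P^s)^s\in I$. Since $(P^s)^c=P^s$, one has $(P^s)^s=(P^s)^{*2}$, and Proposition \ref{prodstar} combined with the fact that $P^s(\aaa)\in\mathbb C_K$ commutes with $\aaa\in\mathbb C_K^n$ gives $(P^s)^{*2}(\aaa)=P^s(\aaa)^2$; the absence of zero divisors in $\mathbb H$ then upgrades $(P^s)^s\in I$ to $P^s\in I$.

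As $P\in I_1$ was arbitrary, $\mathcal S(I_1)\subseteq I$, whence $\mathbb S_{A_1}=\mathcal V_c(\mathcal S(I_1))\supseteq\mathcal V_c(I)\supseteq A_2$, so $\mathbb S_{A_1}\supseteq\mathbb S_{A_2}$. Combined with the WLOG inclusion this forces $\mathbb S_{A_1}=\mathbb S_{A_2}=\mathbb S_{\mathcal V_c(I)}$, and the mirror argument (using $P*Q^s$ in place of $Q*P^s$) yields $\mathcal S(I_2)\subseteq I$ as well.

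The main obstacle is to extract a contradiction from the residual configuration in which $\mathcal V_c(I)=A_1\cup A_2$ is a non-trivial decomposition while $\mathbb S_{A_1}=\mathbb S_{A_2}=\mathbb S_{\mathcal V_c(I)}$ and $\mathcal S(I_j)\subseteq I$ for both $j$. My plan is to split cases: if $\mathcal V_c(I)$ has spherical symmetry, Corollary \ref{coro} directly contradicts reducibility; otherwise some arranged sphere $\mathbb S_{\aaa}\subseteq\mathbb S_{\mathcal V_c(I)}$ meets $\mathcal V_c(I)$ in a subset with at most one point per complex slice (since by Proposition \ref{zericinesi} two zeros on a single slice would promote $\mathbb S_{\aaa}$ to lie in $\mathcal V_c(I)$), and this rigidity severely constrains the way $A_1\cap\mathbb S_{\aaa}$ and $A_2\cap\mathbb S_{\aaa}$ can partition $\mathcal V_c(I)\cap\mathbb S_{\aaa}$. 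I expect to use this constraint to repackage the decomposition into one where one piece is spherically symmetric, and then apply Theorem \ref{nosfere}. This final sphere-by-sphere matching is the delicate part of the argument.
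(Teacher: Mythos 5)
Your argument runs into a genuine gap that you yourself flag at the end, and the part you have completed turns out to be weaker than it appears. The conclusion $\mathcal S(I_1)\subseteq I$ (and its mirror $\mathcal S(I_2)\subseteq I$) is equivalent to $\mathbb S_{\mathcal V_c(I_1)}=\mathbb S_{\mathcal V_c(I_2)}=\mathbb S_{\mathcal V_c(I)}$, i.e.\ to the statement that every arranged sphere of $\mathbb S_{\mathcal V_c(I)}$ meets both pieces. That is an honest constraint, but nothing in your argument excludes the residual configuration; in particular $\sqrt{\mathcal S(I_1)}=\mathcal J(\mathbb S_{\mathcal V_c(I_1)})\subsetneq \mathcal J(\mathcal V_c(I_1))=I_1$ in general, so $\mathcal S(I_1)\subseteq I$ does not upgrade to $I_1\subseteq I$ and the reducibility is not refuted. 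The ``sphere-by-sphere matching'' you propose in the final paragraph is precisely the missing content, and the case split you sketch (``at most one point per complex slice'') is not obviously consistent with the fact that each sphere must already carry a point of $A_1$ and a point of $A_2$.

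The paper closes this gap with a different construction. Rather than symmetrizing polynomials $P\in I_1$, it first shows, using Proposition \ref{zericinesi}, that for each $\aaa\in\mathcal V_c(I_1)\setminus\mathcal V_c(I_2)$ the \emph{whole} arranged sphere $\mathbb S_{\aaa}$ lies inside $\mathcal V_c(I_1)$. This produces a spherically symmetric set $V_1=\mathbb S_{\mathcal V_c(I_1)\setminus\mathcal V_c(I_2)}\subseteq\mathcal V_c(I_1)$ with $\mathcal V_c(I)=V_1\cup\mathcal V_c(I_2)$. Quasi-primality is then applied to $P\in\mathcal J(\mathcal V_c(I_2))\setminus I$ and $K\in\mathcal J(V_1)$: showing $P*K\in I$ yields $K^s\in I$ for all $K$, hence $\mathcal S(\mathcal J(V_1))\subseteq I$; since $V_1$ is spherically symmetric, $\sqrt{\mathcal S(\mathcal J(V_1))}=\mathcal J(V_1)$ (Corollary \ref{essealg}), so $I=\mathcal J(V_1)$ is two-sided (Proposition \ref{Zc}), forcing $\mathcal V_c(I)=\mathbb S_{\mathcal V_c(I)}$, which is irreducible — a contradiction. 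The crucial ingredient you are missing is this replacement of $I_1$ by the vanishing ideal $\mathcal J(V_1)$ of a pre-symmetrized set, which is what makes the radical computation close the loop.
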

	\begin{proof}
		Suppose by contradiction that $$\mathcal{V}_c(I)=\mathcal{V}_c(I_1)\cup \mathcal{V}_c(I_2)$$ is reducible. Then
		\[\mathbb S_{\mathcal{V}_c(I)}=\mathbb S_{\mathcal{V}_c(I_1)}\cup \mathbb S_{\mathcal{V}_c(I_2)}.\]
		Thus we can assume, without loss of generality, that  $\mathbb S_{\mathcal{V}_c(I_1)}\subseteq\mathbb S_{\mathcal{V}_c(I_2)}$. 
		This implies that for any ${\aaa} \in \mathcal{V}_c(I_1) \setminus \mathcal{V}_c(I_2)$ there exists ${\bf \tilde \aaa} \in \mathcal{V}_c(I_2) \cap \mathbb S_{\aaa}$, and hence the entire $\mathbb{S}_{\aaa}\subset \mathcal V_c(I)$. Since $\textbf{\em a} \notin \mathcal V_c(I_2)$, then necessarily $\mathbb S_{\aaa}$ is contained in $\mathcal{V}_c(I_1)$.
		Denote by $V_1=\mathbb S_{\mathcal V_c(I_1)\setminus \mathcal V_c(I_2)}$. Notice that $V_1 \subseteq \mathcal V_c(I_1)$ and  \[\mathcal V_c(I)=V_1\cup \mathcal V_c(I_2).\]
		Consider now a polynomial $P \in \mathcal J(\mathcal V_c(I_2))\setminus I$, and let $K$ be any polynomial in $\mathcal J(V_1)$. We want to prove that $P*K \in I$. For any $\aaa \in \mathcal V_c(I)$, if $\aaa \in \mathcal V_c(I_2)$, $P(\aaa)=0$ and hence $P*K(\aaa)=0$. Let $\aaa \in V_1\setminus \mathcal V_c(I_2)$. If $P(\aaa)=0$, as before we conclude that $P*K(\aaa)=0$. Otherwise, since $\mathbb S_\aaa \subset V_1$, using Proposition \ref{prodstar}, we have
		\[P*K(\aaa)=P(\aaa)K(P(\aaa)^{-1}\aaa P(\aaa))=0.\] 
		Thus $P*K \in \mathcal J(\mathcal V_c(I))=I$ since $I$ is radical. \\
	Since $I$ is quasi prime and $P$ does not belong to $I$, for any $K \in \mathcal J(V_1)$, $K^s$ belongs to $I$, that is $\mathcal S(\mathcal J(V_1))\subseteq I$. Since $I$ is radical we have that
	$\sqrt{\mathcal S(\mathcal J(V_1))}\subseteq I$. Moreover, recalling Corollary \ref{essealg}, 
	\[ \sqrt{\mathcal S(\mathcal J(V_1))}= \mathcal J (\mathbb S_{\mathcal V_c (\mathcal J(V_1))})= \mathcal J ( \mathcal V_c (\mathcal J(V_1)))=\mathcal J(V_1),\]
	 where we used the fact that $\mathcal V_c(\mathcal J(V_1))$ has spherical symmetry and $\mathcal J(V_1)$ is radical.
Therefore, we deduce that $\mathcal J(V_1)\subseteq I$. The other inclusion holds since $V_1 \subseteq \mathcal V_c(I)$, and thus $I=\mathcal J(V_1)$.
Using Proposition \ref{Zc} we get that $I$ is a two-sided ideal and then, thanks to Proposition \ref{twosided},  $\mathcal V_c(I)=\mathbb S_{\mathcal V_c(I)}$ which is irreducible.  
	\end{proof}
}

\noindent Consequently, we also have
\begin{corollary}\label{coro3}
	If $I$ is a {quasi} prime {and radical} ideal of $\mathbb H[q_1,\ldots,q_n]$ such that $\sqrt{\mathcal S(I)}$ is quasi prime, then $\mathcal V_c(I)$ is irreducible.
\end{corollary}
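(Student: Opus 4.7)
The plan is to deduce Corollary \ref{coro3} by chaining Corollary \ref{coro} with Theorem \ref{cpirred}. More precisely, I would first show that $\mathbb S_{\mathcal V_c(I)}$ is itself an irreducible slice algebraic set, and then apply Theorem \ref{cpirred} to $I$ to conclude. The natural intermediate object is the ideal $J := \sqrt{\mathcal S(I)}$, which is quasi prime by hypothesis and radical by the very definition of the radical.

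Proceeding with the first step, Proposition \ref{vrad} combined with Theorem \ref{simmalg} yields
\[
\mathcal V_c(J) \;=\; \mathcal V_c(\sqrt{\mathcal S(I)}) \;=\; \mathcal V_c(\mathcal S(I)) \;=\; \mathbb S_{\mathcal V_c(I)}.
\]
Since every point of $\mathcal V_c(I)$ has commuting components, the remark following the definition of symmetrization guarantees the same for every point of $\mathbb S_{\mathcal V_c(I)}$, and the elementary identity $\mathbb S_{\aaa} = \mathbb S_{\mathbf b}$ for any two points lying on the same spherical set gives $\mathbb S_{\mathbb S_{\mathcal V_c(I)}} = \mathbb S_{\mathcal V_c(I)}$. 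Consequently $\mathcal V_c(J) = \mathbb S_{\mathcal V_c(J)}$, i.e., $\mathcal V_c(J)$ has spherical symmetry. Corollary \ref{coro} applied to $J$ — which is quasi prime, radical, and has the required spherical symmetry — then yields that $\mathcal V_c(J) = \mathbb S_{\mathcal V_c(I)}$ is irreducible. At this stage Theorem \ref{cpirred} applied to the original ideal $I$, whose hypotheses are all in place (quasi prime, radical, with $\mathbb S_{\mathcal V_c(I)}$ just shown to be irreducible), produces the desired irreducibility of $\mathcal V_c(I)$.

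The argument is essentially a composition of two previously established results, so I do not foresee any substantial obstacle. The only mildly delicate point is verifying the spherical symmetry of $\mathcal V_c(J)$; this however reduces to the idempotency of the symmetrization operator on subsets of $\mathbb H^n$ consisting of points with commuting components, and the rest is a clean bookkeeping of Proposition \ref{vrad}, Theorem \ref{simmalg}, Corollary \ref{coro} and Theorem \ref{cpirred}.
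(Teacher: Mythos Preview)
Your proposal is correct and follows essentially the same route as the paper: set $J=\sqrt{\mathcal S(I)}$, use Proposition \ref{vrad} and Theorem \ref{simmalg} to identify $\mathcal V_c(J)=\mathbb S_{\mathcal V_c(I)}$, check its spherical symmetry, apply Corollary \ref{coro} to obtain irreducibility of $\mathbb S_{\mathcal V_c(I)}$, and then conclude via Theorem \ref{cpirred}. The only cosmetic difference is that the paper justifies the spherical symmetry of $\mathcal V_c(J)$ by invoking directly that $\mathcal V_c(\mathcal S(I))$ always has spherical symmetry, whereas you phrase it as idempotency of the symmetrization operator; these are the same observation.
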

\begin{proof}
{The ideal $\sqrt{\mathcal S(I)}$ is by assumption quasi prime and radical. Moreover, recalling that $
	\mathcal V_c(\sqrt{\mathcal S(I)})=\mathcal V_c({\mathcal S(I)})$, we have that
		\[\mathbb S_{\mathcal V_c(\sqrt{\mathcal S(I)}) }=\mathbb S_{\mathcal V_c({\mathcal S(I)}) }=\mathcal V_c({\mathcal S(I)})\]
		since $\mathcal V_c ({\mathcal S(I)})$ has always spherical symmetry. 
		Thus $\mathcal V_c(\sqrt{\mathcal S(I)})$ has spherical symmetry as well.
Corollary \ref{coro} then yields that $\mathcal V_c (\sqrt{\mathcal S(I)})$ is irreducible. Since $\mathcal V_c (\sqrt{\mathcal S(I)})=\mathcal V_c({\mathcal S(I)})=\mathbb S_{\mathcal V_c(I)}$,  
Theorem \ref{cpirred} allows us to conclude.
}
\end{proof}
	
	\subsection{Irreducibility of ideals} 

\noindent	In Algebra there exists a classical notion of irreducibility for ideals, {which in our framework reads as follows}.  
	\begin{definition}\label{redI}
		A right ideal $I$ {of $\mathbb H[q_1,\ldots,q_n]$} is said to be {\em irreducible} if, whenever $I=I_1\cap I_2$ {for two right ideals $I_1,I_2$},  it follows that either $I=I_1$ or $I=I_2$.
		If not, then 
		$I$ is said to be {\em reducible}.
	\end{definition}

\noindent 	In view of the geometric properties of the zeros of quaternionic slice regular polynomials, it is useful to introduce also the following definition. \begin{definition}
			A right ideal $I$ {of $\mathbb H[q_1,\ldots,q_n]$} is said to be {\em s-irreducible} if, whenever $I=I_1\cap \mathcal S(I_2)$ for some right ideals $I_1,I_2$,  it follows that either $I=I_1$ or $I=\mathcal S(I_2)$.
			If not, then 
			$I$ is said to be {\em s-reducible}.
		\end{definition}
\noindent 	The slice algebraic set associated with an irreducible radical ideal is irreducible.  
	\begin{proposition}\label{idirred}
		Let $I$ be a right ideal {of $\mathbb{H}[q_1,\ldots,q_n]$} whose radical $\sqrt {I}$ is an irreducible  right ideal. Then $\mathcal V_c(I)$ is an irreducible {slice} algebraic set.
		
	\end{proposition}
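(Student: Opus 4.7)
The plan is to argue by contradiction. Suppose $\mathcal{V}_c(I)$ is reducible: by Definition \ref{red} there exist proper right ideals $I_1,I_2$ of $\mathbb{H}[q_1,\ldots,q_n]$, incomparable under inclusion, with $\mathcal{V}_c(I)=\mathcal{V}_c(I_1)\cup\mathcal{V}_c(I_2)$. The natural move is to transfer this geometric decomposition into an algebraic identity between ideals by applying $\mathcal{J}$. Using equation \eqref{Jcap} this gives $\mathcal{J}(\mathcal{V}_c(I))=\mathcal{J}(\mathcal{V}_c(I_1))\cap\mathcal{J}(\mathcal{V}_c(I_2))$; then Proposition \ref{vanishing}(1) together with the Strong Nullstellensatz (Theorem \ref{snull}) tells us that $\mathcal{J}(\mathcal{V}_c(J))=\mathcal{J}(\mathcal{V}(J))=\sqrt{J}$ for every right ideal $J$, so the identity reduces to
\[
\sqrt{I}=\sqrt{I_1}\cap\sqrt{I_2}.
\]

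Next I would invoke the hypothesis: $\sqrt{I}$ is an irreducible right ideal in the sense of Definition \ref{redI}. Applied to the intersection above, this forces $\sqrt{I}=\sqrt{I_1}$ or $\sqrt{I}=\sqrt{I_2}$. Assume without loss of generality $\sqrt{I}=\sqrt{I_1}$. Taking $\mathcal{V}_c$ of this equality and using Proposition \ref{vrad} yields $\mathcal{V}_c(I)=\mathcal{V}_c(I_1)$, so that $\mathcal{V}_c(I_2)\subseteq\mathcal{V}_c(I_1)$ and therefore $\sqrt{I_1}\subseteq\sqrt{I_2}$.

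The last step is to pull this containment of radicals back to a contradiction with the incomparability of $I_1$ and $I_2$. Since Proposition \ref{vrad} allows us to replace each $I_k$ in the assumed decomposition by $\sqrt{I_k}$ without altering $\mathcal{V}_c(I_k)$, we may assume from the beginning that $I_1,I_2$ are radical; then $\sqrt{I_1}\subseteq\sqrt{I_2}$ becomes $I_1\subseteq I_2$, directly violating the incomparability required by Definition \ref{red}. This closes the contradiction and shows $\mathcal{V}_c(I)$ is irreducible.

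The routine part is the algebraic manipulation via $\mathcal{J}$ and the Strong Nullstellensatz; the point that needs a little care is the reduction to radical representatives in the reducibility hypothesis, ensuring that the algebraic irreducibility of $\sqrt{I}$ can be legitimately confronted with the geometric incomparability of the $I_k$. Once that is in place, the whole argument is essentially the standard variety-ideal correspondence transcribed into the non-commutative slice setting.
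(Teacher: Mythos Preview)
Your proof is correct and follows essentially the same route as the paper: assume a reducible decomposition of $\mathcal{V}_c(I)$, apply $\mathcal{J}$ together with equation \eqref{Jcap}, Proposition \ref{vanishing} and the Strong Nullstellensatz to obtain $\sqrt{I}=\sqrt{I_1}\cap\sqrt{I_2}$, and derive a contradiction with the irreducibility of $\sqrt{I}$. The paper's proof stops at this intersection identity and simply asserts ``that is $\sqrt{I}$ is reducible'', while you spell out the remaining step---reducing to radical representatives via Proposition \ref{vrad} and showing that $\sqrt{I}=\sqrt{I_k}$ would force $I_1\subseteq I_2$ (or vice versa), violating the incomparability in Definition \ref{red}---which makes your argument more complete.
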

	
	\begin{proof}
		Suppose by contradiction that
		\[\mathcal{V}_c(I)=\mathcal{V}_c(I_1)\cup \mathcal{V}_c(I_2)\]
		is reducible. Hence, thanks to the Strong Nullstellensatz \ref{snull} and to Proposition \ref{vanishing},
		\[\sqrt{I}=\mathcal{J}(\mathcal{V}_c(I))=\mathcal{J}(\mathcal{V}_c(I_1)\cup \mathcal{V}_c(I_2))= \mathcal{J}(\mathcal{V}_c(I_1))\cap  \mathcal{J}(\mathcal{V}_c(I_2))=\sqrt{I_1}\cap \sqrt{I_2},\]
		that is $\sqrt{I}$ is reducible.
	\end{proof}
	
		\noindent The converse is not true in general. 
	\begin{example}\label{qp_not_irred}
		Let $I=\langle q^2+1\rangle$. Then, $\mathcal V_c(I)=\mathbb S_i$ is irreducible, but the ideal $I$ is radical and reducible, as $I=\langle q-j \rangle \cap \langle q-k \rangle$.
	\end{example}

\noindent 	Thanks to Theorem \ref{VirredIprime} and Proposition \ref{idirred}, we have
	\begin{corollary}
		If $I$ is a radical and irreducible right ideal {of $\mathbb H[q_1,\ldots,q_n]$}, then $I$ is a quasi prime ideal.

	\end{corollary}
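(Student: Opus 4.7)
The plan is to obtain this as an immediate consequence of the two preceding results, namely Proposition~\ref{idirred} and Theorem~\ref{VirredIprime}. The logical route is: \emph{irreducibility of $I$ as an ideal} $\Longrightarrow$ \emph{irreducibility of $\mathcal V_c(I)$ as a slice algebraic set} $\Longrightarrow$ \emph{$I$ is quasi prime}. The radicality hypothesis on $I$ is what allows both implications to be applied without modification: the first needs to pass from $I$ being irreducible to $\sqrt{I}$ being irreducible, and the second needs $I$ to be radical as a direct hypothesis.

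First I would observe that since $I$ is assumed radical, we have $\sqrt{I}=I$. In particular $\sqrt{I}$ inherits irreducibility from $I$ in the sense of Definition~\ref{redI}. This puts us exactly in the setting of Proposition~\ref{idirred}, which then yields that $\mathcal V_c(I)$ is an irreducible slice algebraic set. This first step is essentially a tautology given the radicality assumption, so no real work is involved beyond recalling the definitions.

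The second step is to feed this geometric irreducibility back into Theorem~\ref{VirredIprime}: since $I$ is radical and $\mathcal V_c(I)$ is irreducible, the theorem concludes that $I$ is quasi prime, which is exactly the desired statement. There is no real obstacle here, and no new machinery is required; the point of the corollary is simply to record that one can chain the two results by using radicality as the common bridge. The argument fits in one or two lines, and could be presented as: ``$\sqrt{I}=I$ is irreducible by hypothesis, so Proposition~\ref{idirred} yields that $\mathcal V_c(I)$ is irreducible; Theorem~\ref{VirredIprime} then gives that $I$ is quasi prime.''

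It is worth noting, as a sanity check, that the converse fails, as illustrated by Example~\ref{qp_not_irred}: the ideal $\langle q^2+1\rangle$ is quasi prime and radical (its zero set $\mathbb S_i$ is irreducible), yet it decomposes as $\langle q-j\rangle\cap\langle q-k\rangle$ and is therefore reducible as a right ideal. Hence the corollary is genuinely a one-way implication, and radicality together with ideal-theoretic irreducibility is a strictly stronger condition than being quasi prime.
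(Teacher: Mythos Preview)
Your proposal is correct and follows exactly the route the paper takes: the corollary is stated immediately after Proposition~\ref{idirred} with the preface ``Thanks to Theorem~\ref{VirredIprime} and Proposition~\ref{idirred}, we have'', and your two-step chain (radicality gives $\sqrt{I}=I$ irreducible, hence $\mathcal V_c(I)$ irreducible by Proposition~\ref{idirred}, hence $I$ quasi prime by Theorem~\ref{VirredIprime}) is precisely the intended argument.
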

\noindent	{In the other direction, we can prove the following}
{\begin{corollary}
		If $I$ is a quasi prime and radical right ideal of $\mathbb H[q_1,\ldots,q_n]$, then $I$ is s-irreducible.
	\end{corollary}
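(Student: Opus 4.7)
The plan is to argue by contradiction. Suppose $I = I_1 \cap \mathcal{S}(I_2)$ with $I \subsetneq I_1$ and $I \subsetneq \mathcal{S}(I_2)$. I would first pick $P \in I_1 \setminus I$. Since $\mathcal{S}(I_2)$ is, by definition, the right ideal generated by $\{Q^s : Q \in I_2\}$ and $\mathcal{S}(I_2) \not\subseteq I$, there must exist $Q \in I_2$ with $Q^s \notin I$: otherwise every generator would lie in $I$ and hence $\mathcal{S}(I_2)\subseteq I$, contradicting the strict inclusion.

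The crucial structural ingredient is the fact that, in any number of variables, the symmetrization $Q^s = Q * Q^c$ has real coefficients. Unfolding the $*$-product in Definition \ref{R-coniugata2}, the coefficient of $q_1^{n_1}\cdots q_n^{n_n}$ in $Q*Q^c$ takes the form $\sum_{r} a_r \overline{a_{n-r}}$, whose quaternionic conjugate equals itself after reindexing $r \leftrightarrow n-r$, and is therefore real. Consequently, $Q^s$ lies in the $*$-centre of $\mathbb{H}[q_1,\ldots,q_n]$, so in particular $P * Q^s = Q^s * P$.

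From this, $P * Q^s$ belongs to $I_1$ (as $I_1$ is a right ideal containing $P$) and also to $\mathcal{S}(I_2)$ (as $Q^s * P \in \mathcal{S}(I_2)$ by the right-ideal property applied to $Q^s \in \mathcal{S}(I_2)$), and therefore $P * Q^s \in I_1 \cap \mathcal{S}(I_2) = I$. The quasi prime hypothesis applied to the factorisation $P * Q^s \in I$ gives either $P \in I$, which contradicts the choice of $P$, or $(Q^s)^s \in I$. Since $Q^s$ has real coefficients one has $(Q^s)^c = Q^s$, so this last condition reads $Q^s * Q^s \in I$. To finish I would invoke radicality: writing $I = \sqrt{I} = \bigcap_{J} J$ as the intersection of the completely prime right ideals containing $I$, centrality of $Q^s$ gives $Q^s * J \subseteq J$ for each such $J$, so the completely prime condition applied to $Q^s * Q^s \in J$ forces $Q^s \in J$. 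Intersecting over all $J$ yields $Q^s \in I$, contradicting the choice of $Q$.

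The only mildly delicate point is the several-variable verification that $Q^s$ has real coefficients and is therefore $*$-central; this is a straightforward, if notation-heavy, computation from the definitions of the slice product and of $P^c$. Once centrality is in hand, everything else is a clean formal manipulation combining the quasi prime condition, the radical hypothesis, and the characterisation of $\sqrt{I}$ as an intersection of completely prime right ideals.
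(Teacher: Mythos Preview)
Your argument is correct and takes a genuinely different route from the paper's. The paper proceeds geometrically: from $I=I_1\cap\mathcal S(I_2)$ it invokes Corollary~\ref{essei} to write $\mathcal V_c(I)=\mathcal V_c(I_1)\cup\mathcal V_c(\mathcal S(I_2))$, and then appeals to Theorem~\ref{nosfere} (since $\mathcal V_c(\mathcal S(I_2))$ has spherical symmetry) to reach a contradiction. Your proof, by contrast, is purely algebraic: you exploit the fact that $Q^s$ has real coefficients, hence is $*$-central, to show $P*Q^s\in I_1\cap\mathcal S(I_2)=I$, apply quasi-primality to get $(Q^s)^{*2}\in I$, and then use the description of $\sqrt I$ as an intersection of completely prime right ideals---together with centrality once more---to force $Q^s\in I$. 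Your approach is more self-contained, avoiding the geometric machinery of $\mathcal V_c$ and the substantial Theorem~\ref{nosfere}; the paper's approach, on the other hand, illustrates how the earlier structural results feed back into algebraic consequences. Both are short once the key observation (centrality of symmetrizations, respectively spherical symmetry of $\mathcal V_c(\mathcal S(I_2))$) is in hand.
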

	\begin{proof}
		Suppose by contradiction that $I=I_1\cap \mathcal S(I_2)$. Then, thanks to Corollary \ref{essei}, $\mathcal V_c(I)=\mathcal V_c(I_1)\cup \mathcal V_c(\mathcal S(I_2))$. Which is in contradiction with Theorem \ref{nosfere}. 
	\end{proof}
}
%
%
﻿
﻿
﻿
\section{Principal  ideals}\label{Sprinc}
﻿
\noindent If $I$ is a right ideal of $\mathbb{H}[q_1,\ldots,q_n]$ generated by {the polynomial $H$, then
$\mathcal{V}_c(I)$ coincides with the set $Z_c(H)$ of zeros of $H$.}
﻿
﻿
\begin{proposition} \label{irrcp} Let $H\in \mathbb H[q_1,\ldots,q_n]$. The ideal  $I=\langle H\rangle$ is  completely prime if and only if the polynomial  $H$ is irreducible.\end{proposition}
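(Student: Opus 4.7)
Plan: I would prove the two implications separately.

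For the direction ``$H$ irreducible implies $I=\langle H\rangle$ completely prime'', I would take polynomials $P,Q$ satisfying $P*Q \in I$ and $P*I \subseteq I$, and rewrite these as $P*Q = H*R$ and $P*H = H*S$ for some $R,S$. The second relation identifies $P$ as an element that left-normalizes the right ideal $I$. Since $\mathbb H[q_1,\ldots,q_n]$ is a Noetherian domain, I would use the factorization theory of slice regular polynomials, combined with the irreducibility of $H$, to argue that the equation $P*Q = H*R$ forces $H$ to left-divide either $P$ or $Q$, hence $P \in I$ or $Q \in I$.

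For the converse ``$I=\langle H\rangle$ completely prime implies $H$ irreducible'', I would argue by contradiction. Suppose $H = P*Q$ with both $P$ and $Q$ non-constant. The key idea is to exploit the symmetrization $Q^s$, which has real coefficients and is therefore central in $(\mathbb H[q_1,\ldots,q_n],+,*)$. Taking $A = Q^s$ and $B = P$, one computes
\[
A*B \;=\; Q^s * P \;=\; P * Q^s \;=\; P*Q*Q^c \;=\; H*Q^c \;\in\; I,
\]
and centrality of $A$ yields $A*I \subseteq I$ automatically. Complete primality then forces $Q^s \in I$ or $P \in I$. A degree comparison rules out $P \in I$ (since $\deg P < \deg H$ because $Q$ is non-constant), so $Q^s \in I$. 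By the symmetric construction with $A'=P^s$, together with the identity $H^s = P^s * Q^s$ used to manufacture a suitable test pair, I would deduce $P^s \in I$ as well. Finally, combining $P^s, Q^s \in I$ with $H = P*Q$---either by a direct degree analysis inside $H*\mathbb H[q_1,\ldots,q_n]$, or geometrically via the Nullstellensatz using $\mathcal V(H) \subseteq \mathcal V(P^s)\cap \mathcal V(Q^s)$---would yield the desired contradiction.

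The main obstacle is the forward direction ``$H$ irreducible $\Rightarrow$ $I$ completely prime''. Complete primality requires a strong ``prime-like'' property of $H$ with respect to left-divisibility that is not immediate from the definition of irreducibility in the non-commutative multivariate ring, and the normalizing condition $P*H = H*S$ must be used delicately: without it, irreducibility alone would not force the dichotomy, as witnessed for instance by $q^2+1$ in $\mathbb H[q]$, whose associated ideal is only quasi prime. The reverse direction is conceptually easier but still requires a careful endgame to close out after extracting $P^s,Q^s\in I$.
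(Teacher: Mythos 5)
Your plan shares the paper's main devices, but has a genuine gap in the direction ``completely prime $\Rightarrow$ irreducible.''

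On the forward direction (``$H$ irreducible $\Rightarrow$ $I$ completely prime''), you set up exactly the two relations $P*Q=H*R$ and $P*H=H*S$ that the paper uses. The paper exploits the second relation to write $P=H*S*H^{-*}$ in the ring of fractions, substitutes into the first to obtain $R=S*H^{-*}*Q$, and then reads off from the irreducibility of $H$ that either $S=S_1*H$ (giving $P=H*S_1\in I$) or $Q=H*Q_1\in I$. Your sketch gestures at this but does not actually carry out the cancellation argument, and your worry is legitimate that it is the delicate step. One small point: citing $q^2+1$ as a witness for why irreducibility alone is insufficient is off target, since $q^2+1=(q-i)*(q+i)$ is \emph{not} irreducible; the relevant phenomenon is rather that without $P*I\subseteq I$ one can produce $P*Q\in\langle q-i\rangle$ with neither factor in the ideal.

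The converse is where the real gap lies. You start exactly as the paper does: taking $A=Q^s$ (central) and $B=P$, you get $Q^s*P=H*Q^c\in I$ and $Q^s*I\subseteq I$, hence $Q^s\in I$ or $P\in I$, and the degree count disposes of $P\in I$. But your next move, ``by the symmetric construction with $A'=P^s$ and the identity $H^s=P^s*Q^s$, deduce $P^s\in I$,'' does not work. The test pair $(P^s,Q^s)$ satisfies $P^s*Q^s=H^s\in I$ and $P^s*I\subseteq I$, so complete primality only yields $P^s\in I$ \emph{or} $Q^s\in I$; since $Q^s\in I$ is already known, no new information is extracted, and there is no obvious alternate pair $(P^s,B)$ with $P^s*B\in I$. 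Moreover, you take $P,Q$ merely non-constant, whereas the paper's endgame crucially picks $Q$ to be a non-constant \emph{irreducible} factor: from $Q^s=P*Q*R$ it derives $Q=P^c*R^c$, and irreducibility of $Q$ (together with $P^c$ non-constant) forces $R$ constant, so $\langle H\rangle=\langle Q^s\rangle$; one then shows directly that $\langle Q^s\rangle$ cannot be completely prime (apply the definition to the pair $(Q,Q^c)$ and compare degrees). You should replace your ``$P^s\in I$'' step with this closing argument, and insert the choice of $Q$ irreducible at the outset.
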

\begin{proof} Suppose{ that $I$ is completely prime and, by contradiction, that $H=P*Q$, with $Q$ a non-constant irreducible factor, and $P$ non-constant.
 Note that $(P*Q)*Q^c=P*Q^s\subseteq I$ and $Q^s*I=I*Q^s\subseteq
 I$}. Then either $P$ or $Q^s$ belong to $I$. If $P\in I$, then
 $\langle P \rangle \subset I=\langle P*Q \rangle \subset \langle P
 \rangle$, hence $I=\langle P\rangle$ and $Q$ is a constant polynomial, a contradiction. 
 Otherwise, if $Q^s\in
 I$,  we
 have $$Q^s=P*Q*R$$ where $R$ is a polynomial in $\mathbb H[q_1,\ldots, q_n]$. Then, $P=Q^s*R^{-*}*Q^{-*}=R^{-*}*Q^c$, and
 thus $Q^c=R*P$ and $Q=P^c*R^c$. Therefore, taking into account that $Q$ is irreducible, $R^c$ is constant that is $\langle H\rangle=\langle Q^s\rangle$. Now note that a completely prime ideal cannot be generated by the symmetrisation of an irreducible factor. Indeed, if $I=\langle Q^s\rangle$ we have $Q^s=Q*Q^c\in I$ and $Q*I\subseteq I$, thus either $Q\in I$ or $Q^c\in I$, but none of them can be written as a multiple of $Q^s$, {leading to a contradiction.}

Conversely, {suppose that $H$ is irreducible and
take $P$ and $Q$ in $\mathbb{H}[q_1,\ldots, q_n]$  such that $P*Q\in I$ and $P*I\subseteq I$. Therefore there exist $R,S$ polynomials in $\mathbb{H}[q_1,\ldots, q_n]$ such that $$P*Q=H*R$$ $$P*H=H*S.$$
Thus
$P=H*S*H^{-*}$ and $R=S*H^{-*}*Q$. Since $H$ is irreducible, we conclude that either $S=S_1*H$ and then $P=H*S_1 \in I$, or $Q=H*Q_1 \in I$. }
 \end{proof}

﻿\noindent {Specializing Theorem \ref{VirredIprime} to the case of a principal ideal, we get the following stronger result.}
 \begin{theorem} \label{symprime} Let $I=\langle H \rangle$ be a radical right ideal {of $\mathbb{H}[q_1,\ldots, q_n]$} such that $\mathcal{V}_c(I)$ is irreducible. 
Then 
either $H$ is irreducible
or $H$ is the symmetrization of an irreducible polynomial.
 \end{theorem}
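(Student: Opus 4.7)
The plan is to deduce the statement from Theorem \ref{VirredIprime} together with the factorization argument that already appears in the proof of Proposition \ref{irrcp}. The idea is: apply quasi-primality to an irreducible factorization of $H$, rule out one branch by a domain argument, and show that the surviving branch forces $H$ to coincide with the symmetrization of an irreducible polynomial, up to a unit.

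First, since $I=\langle H\rangle$ is radical and $\mathcal V_c(I)$ is irreducible, Theorem \ref{VirredIprime} gives that $I$ is quasi prime. Suppose $H$ is not irreducible. Using that $\mathbb H[q_1,\ldots,q_n]$ is a Noetherian domain (hence atomic), one factors $H=P*Q$ with $Q$ irreducible and both $P,Q$ non-constant. The quasi-prime property applied to $P*Q=H\in I$ gives either $P\in I$ or $Q^s\in I$. If $P\in I$, then $P=H*S$ for some $S$, whence $H=P*Q=H*S*Q$, and cancellation in the domain forces $S*Q=1$, making $Q$ a unit, a contradiction. Hence $Q^s\in I$, so $Q^s=H*R=P*Q*R$ for some polynomial $R$.

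At this point I would replay the algebraic manipulation already carried out in the proof of Proposition \ref{irrcp}: using that $Q^s$ has real coefficients and is therefore central for $*$, one passes to the Ore fraction ring and rewrites $P=Q^s*R^{-*}*Q^{-*}=R^{-*}*Q^c$, i.e., $R*P=Q^c$. Since $Q^c$ is irreducible whenever $Q$ is (if $Q^c=A*B$ then $Q=B^c*A^c$, so atomicity of $Q$ forces one of $A,B$ to be a unit) and $P$ is non-constant, $R$ must be a nonzero constant. Therefore $H*R=Q^s$ with $R$ a unit, so $H$ is the symmetrization of the irreducible polynomial $Q$, up to multiplication by a unit.

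The main obstacle is justifying the fraction-ring step $Q^s=P*Q*R\Rightarrow R*P=Q^c$ in the multivariate setting: one needs the centrality of $P^s$ for $*$ (a direct check from the explicit $*$-formula, showing $P*P^c$ has real coefficients in several variables) and the existence of a suitable Ore quotient ring for $\mathbb H[q_1,\ldots,q_n]$. Both are already tacitly used in the proof of Proposition \ref{irrcp}, so once they are in place the rest of the argument goes through.
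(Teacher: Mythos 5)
Your proof is correct and follows essentially the same approach as the paper. The only cosmetic differences are that you obtain the dichotomy $P\in I$ or $Q^s\in I$ by citing Theorem \ref{VirredIprime} directly rather than re-deriving it geometrically as the paper's proof does, and that after reaching $Q^s=H*R$ you pass to the Ore fraction ring (as in the proof of Proposition \ref{irrcp}) to conclude $R*P=Q^c$, whereas the paper reaches the same identity by comparing $H^s=H*H^c$ with $H^s=H*Q_1*P^s$ and cancelling in the domain.
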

 \begin{proof} 
	Suppose $H$ is reducible that is $H=P*Q$, {where }we can assume that $Q$ is a non-constant  irreducible factor. We have that $P*Q$ vanishes on  $\mathcal{V}(I)$, and thus it vanishes also on $\mathcal{V}_c(I).$ On this set $P*Q$ has an {explicit} expression (see Proposition \ref{prodstar}). Indeed for all $\textbf{\em a}\in \mathcal{V}_c(I)$, either $P(\textbf {\em a})=0$, or $$(P*Q)(\textbf{\em a})=P(\textbf{\em a})\cdot Q({P}^{-1}(\textbf{\em a})\textbf {\em a}P(\textbf{\em a})$$
	and $Q({P}^{-1}(\textbf{\em a})\textbf{\em a}P(\textbf{\em
		a})=0$. 
		Take now the two subsets of $\mathcal{V}_c(I),$ given
	by $$V_1=\mathcal{V}(I)\cap \mathcal{V}_c(\langle P\rangle)=\mathcal{V}_c(\langle P\rangle)$$
	and
	
	$$  V_2=\mathcal{V}(I)\cap \mathbb{S}_{\mathcal{V}_c(\langle
		Q\rangle)}=
	\mathcal{V}_c{(I)}\cap\mathcal{V}_c(\langle
	{Q^s}\rangle)=
	\mathcal{V}_c(\langle {Q^s},H\rangle)$$
	Now  either ${ \textbf{\em a}}$ belongs to $V_1$ or one of
	its conjugates belongs to $V_c(\langle Q\rangle)$, thus ${\textbf{\em a}}$
	belongs to $V_2$; therefore $$\mathcal{V}_c(I)= V_1\cup {V_2}$$ 
	
	\noindent Since $\mathcal{V}_c(I)$ is irreducible, then either
	$$\mathcal{V}_c(I)=V_1=\mathcal{V}_c(\langle P\rangle)$$ 
	and thus $P$ vanishes on $\mathcal{V}_c(I)$ that is $P\in \mathcal{J}(\mathcal{V}_c(I))=\sqrt{I}=\langle H\rangle$. {Therefore there exists $P_1 \in \mathbb{H}[q_1,\ldots, q_n]$ such that $P=H*P_1=P*Q*P_1$} that is $Q*P_1=1$ in contradiction with the hypothesis on $Q$.
	
	\noindent The other possibility is that $\mathcal{V}_c(\langle
	H\rangle)=V_2$,
 and hence $Q^s\in \langle H\rangle$.  
﻿
\noindent therefore there exists a polynomial $Q_1$ such that 
	\[H^s=P^s*Q^s=P^s*H*Q_1=H*Q_1*P^s,\]
	 that is \[H^c=Q^c*P^c=Q_1*P^s\]
	 which implies that $Q_1*P=Q^c$. Recalling that $Q^c$ is irreducible (since $Q$ is irreducible), we get that $Q_1$ is a constant and thus $H$ is a multiple of $Q^s$.
	i.e. $H$ is the
	symmetrization of an irreducible factor.
\end{proof}
﻿
%
  \begin{proposition}\label{Hsquasiprime} If $H$ is an irreducible polynomial such that $H\neq H^c$, then $\langle H^s \rangle$ is a quasi prime and radical ideal. 
 \end{proposition}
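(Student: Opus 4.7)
The plan is to verify quasi-primeness and radicality separately, using the factorization $H^s=H*H^c$ and the observation that $H^c$ is also irreducible (regular conjugation is an involution of the ring, so any factorization of $H^c$ would yield one of $H$). By Proposition \ref{irrcp} both $\langle H\rangle$ and $\langle H^c\rangle$ are completely prime. A direct computation on coefficients shows that $H^s$ has real coefficients: the coefficient of a monomial in $H*H^c$ is a sum of terms of the form $a\bar b+b\bar a=2\,\mathrm{Re}(a\bar b)$ together with a diagonal term $|a|^2$. Hence $H^s$ is central under the slice product, $\langle H^s\rangle$ is two-sided, and for every $R$ one has $(H*R)^s=H*R*R^c*H^c=R^s*H^s=H^s*R^s$, which yields the identity $\mathcal S(\langle H\rangle)=\langle H^s\rangle$.

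For quasi-primeness, suppose $P*Q\in\langle H^s\rangle\subseteq\langle H\rangle$; complete primeness of $\langle H\rangle$ gives $P\in\langle H\rangle$ or $Q\in\langle H\rangle$. If $Q=H*Q_1$, the computation $Q^s=Q^c*Q=Q_1^c*H^c*H*Q_1=Q_1^c*H^s*Q_1=H^s*Q_1^s$ places $Q^s$ in $\langle H^s\rangle$. If instead $P=H*P_1$, writing $P*Q=H^s*R$ and cancelling $H$ on the left (the ring $(\mathbb H[q_1,\ldots,q_n],+,*)$ is a domain) gives $P_1*Q=H^c*R\in\langle H^c\rangle$; complete primeness of $\langle H^c\rangle$ then yields either $P_1=H^c*P_2$, so $P=H^s*P_2\in\langle H^s\rangle$, or $Q=H^c*Q_1$, so $Q^s=H^s*Q_1^s\in\langle H^s\rangle$ by the same centrality computation. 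For radicality, Corollary \ref{essealg} and the identity $\mathcal S(\langle H\rangle)=\langle H^s\rangle$ give $\sqrt{\langle H^s\rangle}=\mathcal J(\mathbb S_{\mathcal V_c(\langle H\rangle)})$. Any $F$ in this radical must vanish on both $\mathcal V_c(\langle H\rangle)$ and $\mathcal V_c(\langle H^c\rangle)$, the latter because these two sets have the same spherical completion by Proposition \ref{corrsimm}; since $\langle H\rangle$ and $\langle H^c\rangle$ are radical, Proposition \ref{vanishing} forces $F\in\langle H\rangle\cap\langle H^c\rangle$. Writing $F=H*G_1\in\langle H^c\rangle$, complete primeness of $\langle H^c\rangle$ gives $G_1\in\langle H^c\rangle$---so $G_1=H^c*G$ and $F=H^s*G\in\langle H^s\rangle$---or the residual case $H\in\langle H^c\rangle$.

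The hard part is exactly this residual case $H\in\langle H^c\rangle$, which can still occur under $H\neq H^c$: for instance, $H=qi+j$ in $\mathbb H[q]$ has $H^c=-H$, so $\langle H\rangle=\langle H^c\rangle$ and the dichotomy above becomes uninformative. To handle it I plan to work slice-wise via the Splitting Lemma \ref{splitting}: writing $F=F_1+F_2L$ and $H=F_H+G_HL$, the real-coefficient polynomial $H^s|_{\mathbb C_K^n}$ can be expressed explicitly in terms of $F_H,G_H$, and the assumption $H\neq H^c$ is what should guarantee that $H^s|_{\mathbb C_K^n}$ is squarefree in $\mathbb C_K[z_1,\ldots,z_n]$. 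The classical complex Nullstellensatz then forces $H^s|_{\mathbb C_K^n}$ to divide each of $F_1,F_2$, and reassembling the quotients via the identity principle for slice regular polynomials yields the global identity $F=H^s*G$. Establishing the squarefreeness of $H^s|_{\mathbb C_K^n}$ from the non-commutative irreducibility of $H$ together with the hypothesis $H\neq H^c$ is the most delicate step of the argument.
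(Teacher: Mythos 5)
Your argument contains a genuine gap at its very first step. Definition~\ref{cprime} of a completely prime right ideal requires the side condition $P*I\subseteq I$ in addition to $P*Q\in I$ before one may conclude $P\in I$ or $Q\in I$; you invoke ``complete primeness of $\langle H\rangle$'' without verifying this side condition, and the implication really does fail without it. A concrete counterexample inside the setting of the Proposition: take $H=q-i$ (irreducible, with $H\neq H^c=q+i$), $P=q-j$, $Q=q+j$. Then $P*Q=q^2+1=H^s\in\langle H^s\rangle\subseteq\langle H\rangle$, yet neither $P$ nor $Q$ lies in $\langle q-i\rangle$ (both are degree $1$ and not right associates of $q-i$; one can also check directly that $P*(q-i)\notin\langle q-i\rangle$, so the side condition indeed fails). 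The desired conclusion $Q^s\in\langle H^s\rangle$ is still true here ($Q^s=q^2+1$), but your route to it through the dichotomy ``$P\in\langle H\rangle$ or $Q\in\langle H\rangle$'' is broken. The same defect reappears in the radicality part, where you apply complete primeness of $\langle H^c\rangle$ to the factorization $F=H*G_1$ without checking $H*\langle H^c\rangle\subseteq\langle H^c\rangle$. On top of this, you yourself flag a ``residual case'' $H\in\langle H^c\rangle$ (which is genuine; your example $H=qi+j$ with $H^c=-H$ shows it) and leave its treatment as a sketch resting on an unproved squarefreeness claim, so even granting the misused lemma the proof would be incomplete.

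The paper's own argument avoids complete primeness entirely and works directly with factorization. For quasi-primeness it uses that $H^s$ has real coefficients and is therefore central, so from $P*Q=H^s*R$ the real factor $H^s$ must either lie wholly inside $Q$ (giving $Q^s\in\langle H^s\rangle$ at once) or split as an irreducible $\hat H$ with $\hat H^s=H^s$ straddling the two factors, $P=P_1*\hat H$ and $Q=\hat H^c*Q_1$, whence $Q^s=Q_1^c*\hat H^s*Q_1=H^s*Q_1^s\in\langle H^s\rangle$. For radicality it reaches $P\in\langle H\rangle\cap\langle H^c\rangle$ as you do, but closes with a factorization step ($P=H*P_1=H^c*P_2$ with $H\neq H^c$ irreducible forces $P_2\in\langle H\rangle$) rather than with complete primeness. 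If you want to rescue your approach, the right fix is to replace the appeal to Definition~\ref{cprime} with an explicit factorization argument of this kind, which never needs the one-sided hypothesis $P*I\subseteq I$.
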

\begin{proof}
	Let $P,Q$ be such that $P*Q=H^s*R$. If $P \notin \langle H^s \rangle$, then:
	either $Q \in \langle H^s \rangle$ and hence $Q^s \in \langle H^s \rangle$; or there exists $\hat H$ such that
	\[P=P_1*\hat H \quad \text{and} \quad Q=\hat H^c * Q_1, \quad \text{with $\hat H^s=H^s$.}\]
	Thus $Q^s \in \langle H^s \rangle$ {and $\langle H^s \rangle$ is a quasi prime ideal.}
	
{\noindent Consider now} $P\in \sqrt{\langle H^s \rangle}=\mathcal J(\mathcal V_c(\langle H^s \rangle))$. Then $P$ vanishes on $\mathbb S_{\mathcal V_c(H)}$ and thus on $\mathcal V_c(H) \cup \mathcal V_c(H^c)$. Since $H$ is irreducible, we have that both $\langle H\rangle $ and $\langle H^c \rangle$ are radical and hence $P \in \langle H\rangle \cap \langle H^c \rangle$. 	Therefore there exist $P_1,P_2$ such that
\[P=H*P_1=H^c*P_2.\]
Since $H\neq H^c$ {and are both irreducible}, then necessarily there exists $P_3$ such that $P_2=H*P_3$. Thus $P \in \langle H^s \rangle$ {and $\langle H^s \rangle$ is a radical ideal.}
\end{proof}
\noindent Observe that if $H=H^c$, then $\langle H^s \rangle=\langle H^{*2} \rangle$ which is not a radical ideal.
﻿
%
﻿
﻿
{
	
\noindent For principal ideals, the following theorem provides equivalent conditions for  the irreducibility of the corresponding {slice} algebraic set.
﻿
﻿
﻿
\begin{theorem}\label{principal}
	Consider  $I=\langle H \rangle$ with
		$H\in \mathbb H[q_1,\ldots q_n]$. The following are equivalent:
	\begin{enumerate}
		\item the ideal $I$ is quasi prime;
		\item $H$ is an irreducible polynomial or $H=Q^s$ with $Q$
		an irreducible polynomial $Q\neq Q^c$;
		\item $I$ is a radical ideal and $\mathcal V_c(I)$ is an irreducible slice algebraic set.
	
	\end{enumerate}
	\end{theorem}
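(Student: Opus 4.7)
The plan is to prove the three-way equivalence via the cycle $(2)\Rightarrow(1)$, $(1)\Rightarrow(2)$, $(2)\Leftrightarrow(3)$. The first and last parts assemble directly from results already in the paper; only $(1)\Rightarrow(2)$ requires a fresh factorization argument.

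For $(2)\Rightarrow(1)$: if $H$ is irreducible, then $\langle H\rangle$ is completely prime by Proposition \ref{irrcp} and hence quasi prime by Proposition \ref{cp-qp}; if $H=Q^s$ with $Q$ irreducible and $Q\neq Q^c$, then $\langle H\rangle$ is quasi prime by Proposition \ref{Hsquasiprime}. For $(2)\Leftrightarrow(3)$, the direction $(3)\Rightarrow(2)$ is Theorem \ref{symprime}, supplemented by the remark after Proposition \ref{Hsquasiprime} to exclude $Q=Q^c$ (which would force $H=Q^2$ and hence $I$ non-radical, contradicting (3)). The converse $(2)\Rightarrow(3)$ combines radicality --- from Proposition \ref{irrcp} if $H$ is irreducible, or Proposition \ref{Hsquasiprime} if $H=Q^s$ with $Q\neq Q^c$ --- with irreducibility of $\mathcal{V}_c(I)$ obtained via Corollary \ref{coro3}, after identifying $\sqrt{\mathcal{S}(I)}$ as $\langle H^s\rangle$ (in the irreducible case, again quasi prime by Proposition \ref{Hsquasiprime} since $H\neq H^c$) or as $\langle H\rangle$ itself (in the $H=Q^s$ case, where it is quasi prime by hypothesis).

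The core step $(1)\Rightarrow(2)$ proceeds by factorization. Assume $I=\langle H\rangle$ is quasi prime and $H$ is reducible, so $H=P*Q$ with $Q$ an irreducible right factor and $P$ non-constant. Quasi primality applied to $P*Q\in I$ yields $P\in I$ or $Q^s\in I$. The first possibility forces $P=H*R$, hence $R*Q=1$ by left-cancellation in the Ore domain $\mathbb{H}[q_1,\ldots,q_n]$, which is impossible for non-constant $Q$. Therefore $Q^s=H*T=P*Q*T$, and cancelling $Q$ on the left in the identity $Q^s=Q*Q^c$ gives $Q^c=P*T$; irreducibility of $Q^c$ together with non-constancy of $P$ forces $T$ to be a scalar. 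Consequently $P$ is a scalar multiple of $Q^c$ and $H$ is a scalar multiple of $Q^c*Q=Q^s$, as required. The refinement $Q\neq Q^c$ must then be extracted from the quasi prime hypothesis, and this, together with justifying the left-cancellation steps in the noncommutative setting, is the main obstacle.
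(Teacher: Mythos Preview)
Your plan matches the paper's: the same cycle of implications built from Propositions \ref{irrcp}, \ref{cp-qp}, \ref{Hsquasiprime}, Theorem \ref{VirredIprime} and Theorem \ref{cpirred} (the paper invokes the last directly in $(2)\Rightarrow(3)$, whereas you route through its consequence Corollary \ref{coro3}, which is harmless). The factorization argument you give for $(1)\Rightarrow(2)$ is also the paper's.

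The cancellation step in that argument, however, does not work as you wrote it. From $Q*Q^c=Q^s=P*Q*T$ there is no common \emph{left} factor $Q$ on the right-hand side to cancel. What the paper actually uses (in the proof of Proposition \ref{irrcp}, to which it refers back) is that $Q^s$ is central in $\mathbb H[q_1,\ldots,q_n]$: working in the Ore field of fractions one gets $P=Q^s*T^{-*}*Q^{-*}=T^{-*}*Q^s*Q^{-*}=T^{-*}*Q^c$, hence $Q^c=T*P$ (note the order), and irreducibility of $Q^c$ together with non-constancy of $P$ then forces $T$ to be a unit, so $H$ is a scalar multiple of $Q^s$. Your conclusion is right; only the justification needs this correction.

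As for the refinement $Q\neq Q^c$ that you flag as the main obstacle: the paper's own proof of $(1)\Rightarrow(2)$ stops at ``$H=Q^s$ up to a multiplicative constant'' and does not derive $Q\neq Q^c$ from quasi primality either, so on this point you are aligned with the paper rather than behind it. A related caveat in your $(2)\Rightarrow(3)$: the clause ``since $H\neq H^c$'' is not part of the hypothesis when $H$ is merely irreducible, so applying Proposition \ref{Hsquasiprime} to $\langle H^s\rangle$ needs a separate (easy) treatment of the case $H=H^c$; the paper's argument via Corollary \ref{coro} and Theorem \ref{cpirred} has the same implicit case split.
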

\begin{proof}
\noindent{ \bf $(1) \Rightarrow (2)$} Suppose that $H=P*Q$, with  $Q$ a non-constant irreducible factor, and $P$ a non-constant polynomial.
Since $I$ is quasi prime, either $P$ or $Q^s$ belong to $I$. If $P\in I$, then
$\langle P \rangle \subset I=\langle P*Q \rangle \subset \langle P
\rangle$, hence $I=\langle P\rangle$ and $Q$ is a constant, a contradiction. 
Otherwise, if $Q^s\in
I$, we
have $$Q^s=P*Q*R$$ with $R \in \mathbb H[q_1,\ldots,q_n]$. Then, proceeding as in the proof of Proposition \ref{irrcp},  taking into account the irreducibility of $Q$, we get that $P$ is a multiple of $Q^c$ and thus
$H=Q^s$ up to a multiplicative constant.\\
\noindent{ \bf $(2) \Rightarrow (3)$}
Assume that 
$I$ is a principal  ideal generated by an irreducible polynomial $H\in\mathbb{H}[q_1,\ldots,q_n]$. Then, from Proposition \ref{irrcp}, it follows that $I$ is completely prime and hence {also} radical.
Combining Corollary \ref{coro} and Proposition \ref{Hsquasiprime} we get 
$\mathcal{V}_c(\langle H^s \rangle)$ is irreducible.
 We now apply Theorem \ref{cpirred} and conclude that $\mathcal{V}_c(I)$ is irreducible. 
 If $I$ is a principal  ideal generated by $H=Q^s$, with 
$Q$ an irreducible polynomial such that $Q\neq Q^c$, then $I$ is quasi prime and radical, thanks to  
 Proposition \ref{Hsquasiprime}. Since  $\mathcal{V}_c(\langle Q^s \rangle)$ has spherical symmetry, from Corollary \ref{coro}, we get that  $\mathcal{V}_c(I)$ is irreducible. 
 \\
\noindent{ \bf $(3) \Rightarrow (1)$} This follows directly from Theorem \ref{VirredIprime}.
\end{proof}

\noindent {{Thanks to the previous result we can exhibit an explicit example of an irreducible slice algebraic set which is the union of an infinite number of spherical sets. 
	\begin{example} Consider the right ideal $I$ generated by the polynomial $P(q_1,q_2)=q_1q_2+1$. Since $P$ is irreducible, we have that 
		$\mathcal{V}_c(I)=\bigcup_{a\in \mathbb{H}}\mathbb{S}_{(a,-a^{-1})}$ is irreducible.
	\end{example}}
}
}
﻿
﻿
%
 
﻿
﻿
﻿
﻿
﻿
﻿
﻿
﻿
﻿
﻿
﻿
﻿
﻿
﻿
﻿
﻿
﻿

 \end{document}